\date{ }
\newcommand{\wipe}[1]{}
\definecolor{MyDarkRed}{rgb}{0.5,0,0.1}
\newcommand{\myqed}{\mbox{\ \ }\rule{5pt}{5pt}\medskip}
\newcommand{\mysubstack}[2]{\begin{subarray}{c}{#1}\\{#2}\end{subarray}}
\renewcommand{\d}[0]{~\textrm{d}}
\newcommand{\craig}[1]{}
\renewcommand{\d}{\textrm{d}}
\title{A Homotopy-like Class Invariant for Sub-manifolds of Punctured Euclidean Spaces}
\author{S. Bhattacharya \and M. Likhachev \and V. Kumar}
\institute{
    Subhrajit Bhattacharya
    \at Department of Mechanical Engineering and Applied Mechanics, School of Engineering and Applied Science, University of Pennsylvania, Towne 229, 220 S. 33rd Street, Philadelphia, PA 19104. \\Tel: +001-267-252-6638 \\Fax: +001-215-573-6334 \\\email{subhrabh@seas.upenn.edu}
  \and
    Maxim Likhachev
    \at Robotics Institute, Carnegie Mellon University, NSH 3211, 5000 Forbes Ave, Pittsburgh, PA 15213. \\\email{maxim@cs.cmu.edu}
  \and
    Vijay Kumar
    \at Department of Mechanical Engineering and Applied Mechanics, School of Engineering and Applied Science, University of Pennsylvania, Towne 229, 220 S. 33rd Street, Philadelphia, PA 19104. \\\email{kumar@seas.upenn.edu}
}
\begin{document}

\maketitle

\begin{abstract}
 We consider the $D$-dimensional Euclidean space, $\mathbb{R}^D$, with certain $(D-N)$-dimensional compact, closed and orientable sub-manifolds (which we call \emph{singularity manifolds} and represent by $\widetilde{\mathcal{S}}$) removed from it. We define and investigate the problem of finding a homotopy-like class invariant ($\chi$-homotopy) for certain $(N-1)$-dimensional compact, closed and orientable sub-manifolds (which we call \emph{candidate manifolds} and represent by $\omega$) of $\mathbb{R}^D \setminus \widetilde{\mathcal{S}}$, with special emphasis on computational aspects of the problem.
 We determine a differential $(N-1)$-form, $\psi_{\widetilde{\mathcal{S}}}$, such that $\chi_{\widetilde{\mathcal{S}}}(\omega) = \int_\omega \psi_{\widetilde{\mathcal{S}}}$ is a class invariant for such candidate manifolds. We show that the formula agrees with
 formulae from Cauchy integral theorem and Residue theorem of complex analysis (when $D=2,N=2$), Biot-Savart law and Ampere's law of theory of electromagnetism (when $D=3,N=2$), and the Gauss divergence theorem (when $D=3,N=3$), and discover that the underlying equivalence relation suggested by each of these well-known theorems
 is the $\chi$-homotopy of sub-manifolds of these low dimensional punctured Euclidean spaces.
 We describe numerical techniques for computing $\psi_{\widetilde{\mathcal{S}}}$ and its integral on $\omega$, and give numerical validations of the proposed theory for a problem in a $5$-dimensional Euclidean space. We also discuss a specific application from \emph{robot path planning problem}, when $N=2$, and describe a method for computing least cost paths with homotopy class constraints using \emph{graph search techniques}.
\end{abstract}

\keywords{equivalence relation of manifolds \and differential geometry \and algebraic topology \and homotopy classes \and robot path planning \and graph search}


\newpage
\section*{Notations Glossary}

In this paper, wherever we refer to a manifold, it is implicitly implied that the manifold is bounded and orientable, unless otherwise specified.
Manifolds that are connected are called \emph{connected}, otherwise they can have disconnected components. Manifolds without a boundary are called \emph{boundaryless}, otherwise they can have boundaries. In presence of boundary, the manifold is assumed to be a \emph{closed set} (\emph{i.e.} the boundary of the manifold is considered a part of the manifold). Smoothness of a manifold is assumed to be a generic property and is implied wherever required. We assume that non-smooth manifolds can always be approximated by a smooth manifold that differs infinitesimally from the original manifold and carries all the other properties of the original manifold.

\begin{tabular}{p{0.15\textwidth}p{0.75\textwidth}}
 $\mathbb{R}$ & The Euclidean space of dimension $1$ (\emph{i.e.} The real line) \\
 $\mathbb{Z}$ & The set of all integers. \\
 $\mathbb{S}^n$ & The $n$-sphere. \\
 $! A$ & The complement of the set $A$. \\
 $cl(Q)$ & Closure of a set $Q$.
\end{tabular}

\begin{tabular}{p{0.15\textwidth}p{0.75\textwidth}}
 $\mathcal{P}(Q)$ & The power set of $Q$. \\
 $\mathbf{\varpi}^n_d$ & The set of all possible $n$-dimensional boundaryless sub-manifolds of $\mathbb{R}^d$ (with $n<d$) such that each manifold in it can be expressed as the boundary of a $(n+1)$-dimensional manifold (which need not be embeddable in $\mathbb{R}^d$, but can be immersed in it). That is, it is the set of all sub-manifolds of $\mathbb{R}^d$ that are cobordant to the $n$-sphere. \\
 $D$ & Used to represent the dimension of the ambient manifold throughout the paper.
\end{tabular}

\begin{tabular}{p{0.15\textwidth}p{0.75\textwidth}}
 $\mathscr{E}$ & The $D$-dimensional Euclidean manifold, in which we embed the candidate manifold $\omega$. \\
 $\mathscr{E}'$ & The $D$-dimensional Euclidean manifold, in which we embed the singularity manifolds $\widetilde{\mathcal{S}}$. \\
 & \textit{Note:} We assume that both $\mathscr{E}$ and $\mathscr{E}'$ are identical to $\mathbb{R}^D$ and they share the same metric chart. \\
 $\widetilde{\mathcal{S}}$ & The set of $(D-N)$-dimensional boundaryless singularity manifolds. \\
 $S_i$ & The $i^{th}$ connected component of $\widetilde{\mathcal{S}}$.
\end{tabular}

\begin{tabular}{p{0.15\textwidth}p{0.75\textwidth}}
 $\partial M$ & The boundary of a manifold $M$. \\
 $TM_{\mathbf{p}}$ & The tangent space of manifold $M$ at $\mathbf{p}$.\\
 $\omega$ & Symbol used to represent boundaryless $(N-1)$-dimensional candidate sub-manifold of $\mathscr{E}$ that is an element of $\mathbf{\varpi}^{N-1}_D$. This is the manifold for which we define the $\chi$-homotopy invariant. $\omega$ needs to be embedded in $\mathscr{E}$. \\
 $\Omega$ & Symbol used to represent a $N$-dimensional sub-manifold of $\mathscr{E}$ such that $\omega$ is its boundary. $\Omega$ can be immersed in $\mathscr{E}$. \\
 $\mathbf{\Omega}(\omega)$ & The set of all possible $N$-dimensional sub-manifolds of $\mathscr{E}$ such that $\omega$ is their boundary. Thus, $\Omega\in\mathbf{\Omega}(\omega)$.
\end{tabular}

\begin{tabular}{p{0.15\textwidth}p{0.75\textwidth}}
 $N$ & One more than the dimension of the candidate manifold. \emph{i.e.} Dimension of the manifold $\Omega$. \\
 $B^M_\epsilon(\mathbf{p})$ & A ball of dimension same as the dimension of the oriented sub-manifold $M$, centered at $\mathbf{p}\in M \subset \mathbb{R}^D$, embedded in $M$, with volume orientation same as that of $M$ at $\mathbf{p}$, and of radius $\epsilon$. \\
 $\delta^D$ & The Dirac Delta function on a $D$-dimensional manifold. \\
 $\mathcal{V}$ & The $D$-dimensional Cartesian product space $\Omega \times S$. \\
 $x^{(\tau)}_i$ & A notation to represent $x_i$ or $x'_i$ compactly. $\tau\in\{0,1\}$. If $\tau=0$, $x^{(0)}_i \equiv x_i$, if $\tau=1$, $x^{(1)}_i \equiv x'_i$.
\end{tabular}

\begin{tabular}{p{0.15\textwidth}p{0.75\textwidth}}
 $\mathcal{N}^D_{-k}$ & The ordered set $\{~1,~2,~\cdots,~k-1,~k+1,~\cdots,~D \}$. \\
 $\textrm{perm}(\mathcal{N})$ & The set of all permutations of the elements of set $\mathcal{N}$. \\
 $\textrm{sgn}(\sigma)$ & Parity of a permutation $\sigma$. $\textrm{sgn}(\sigma)$ is $1$ if $\sigma$ an even permutation, and $-1$ if it is odd.
\end{tabular}

\begin{tabular}{p{0.15\textwidth}p{0.75\textwidth}}
 ${part}^w(A)$ & Let us consider an ordered set $A = \{a_1,a_2,\cdots,a_q\}$ with $a_1\leq a_2\leq \cdots \leq a_q$ (where the inequality sign signifies order of arrangement and not necessarily the order of magnitude). We represent the set of all ordered $2$-partitions of the set $A$ into $w$ and $q-w$ elements as $~{part}^w(A)$, such that for a $\rho = \{\rho_l,\rho_r\} \in {part}^w(A)$, $\rho_l$ and $\rho_r$ are ordered sets of $w$ and $q-w$ elements respectively, with the properties that $\rho_l \cap \rho_r = \emptyset$, $\rho_l(1)\leq\rho_l(2)\leq\cdots\leq\rho_l(w)$ and $\rho_r(1)\leq\rho_r(2)\leq\cdots\leq\rho_r(q-w)$. Then the sign of the partition, $~\textrm{sgn}(\rho)$, is defined as the permutation sign of the ordered set $\rho_l \sqcup \rho_r$.  \\
 & For example,~~
    ${part}^3(\{1,3,6,9,5\}) = \big\{ ~
 \left\{ \{ 1,3,6\}, \{ 9,5\} \right\},$ \\
 & $\left\{ \{ 1,3,9\}, \{ 6,5\} \right\},~~
 \left\{ \{ 1,3,5\}, \{ 6,9\} \right\},
 ~~\left\{ \{ 1,6,9\}, \{ 3,5\} \right\},$ \\
 & $ \left\{ \{ 1,6,5\}, \{ 3,9\} \right\},~~
 \left\{ \{ 1,9,5\}, \{ 3,6\} \right\},
 ~~\left\{ \{ 3,6,9\}, \{ 1,5\} \right\},$ \\
 & $ \left\{ \{ 3,6,5\}, \{ 1,9\} \right\},~~
 \left\{ \{ 3,9,5\}, \{ 1,6\} \right\},
 ~~\left\{ \{ 6,9,5\}, \{ 1,3\} \right\}~
\big\}$. \\
 & Then if $\rho = \left\{ \{ 1,6,5\}, \{ 3,9\} \right\} \in {part}^3(\{1,3,6,9,5\})$, we write $\rho_l = \{ 1,6,5\}$ and $\rho_r = \{ 3,9\}$. Also, the $j^{th}$ element of $\rho_b,~b\in\{l,r\}$ is written as $\rho_b(j)$. Thus, in the example, $\rho_l(2) = 6$.
\end{tabular}

\begin{tabular}{p{0.15\textwidth}p{0.75\textwidth}}
 $\chi_{\widetilde{\mathcal{S}}}(M)$ & $ = \int_M \psi_{\widetilde{\mathcal{S}}}$. If the $(N-1)$-dimensional manifold $M\in \mathbf{\varpi}^{N-1}_D$, this gives the $\chi$-homotopy invariant for such manifolds. \\
 $\chi_{\widetilde{\mathcal{S}}}(\Lambda;\lambda)$ & $ = \int_\Lambda \psi_{\widetilde{\mathcal{S}}}$, but with the constraint on $\Lambda$ that $\partial \Lambda = \lambda$. \\
 $\mathbf{\Lambda}(\lambda)$ & The set of all $(N-1)$-dimensional sub-manifolds of $\mathscr{E}$ such that $\lambda$ is their boundary. $\lambda$ itself is assumed to be a boundaryless $(N-2)$-dimensional sub-manifold.
\end{tabular}

\begin{tabular}{p{0.15\textwidth}p{0.75\textwidth}}
 $\mathcal{G}=\{\mathcal{V},\mathcal{E}\}$ & A graph, $\mathcal{G}$, with vertex set $\mathcal{V}$ and edge set $\mathcal{E}$. \\
 $\{\mathbf{v}_1\rightarrow\mathbf{v}_2\}$ & A directed edge in a graph emanating from vertex $\mathbf{v}_1$ and incident to vertex $\mathbf{v}_2$.
\end{tabular}


\section{Introduction}

\subsection{Related Work}

The study of equivalence classes of manifolds is an important and well-researched subject and its history goes back as early as the $18^{th}$ century with the arrival of the formal subject of topology and concepts like homeomorphism \cite{Topology:Munkres:1999}.

The study of homotopy classes in high dimensional complex and real manifolds is not new \cite{Delanghe:clifford:92,GriffithsHarris:AldebraicGeo:1994}.
There has been extensive research on homotopy classes of sub-manifolds of arbitrary manifolds. Such research on $n$-spheres, for example, have led to important studies like the Poincar\'e conjecture and its proof \cite{Tao:Perelman:Poincar}.
The \emph{generalized Residue Theorem} in high dimensional complex manifolds, gives a prescription for computing homotopy invariants for identifying homotopy classes induced due to presence of point ($0$-dimensional) punctures in the manifold \cite{GriffithsHarris:AldebraicGeo:1994}. Clifford algebra can be used for solving similar problems in hyper-complex manifolds \cite{Delanghe:clifford:92}.
There have also been some analytic study on homotopy classes of sub-manifolds of general punctured manifolds \cite{Homotopy:Darryl:82}, but without much emphasis on computational aspects of the problem.

Homology theory \cite{Hilton:History:88} has been highly developed for identification of \emph{homology classes} of sub-manifolds in arbitrary topological spaces (Figure~\ref{fig:homology}).
Such theories study more involved types of equivalence relations induced by the global topology of the space.
Such studies can also lead one along the directions of estimating intrinsic curvature of topological spaces, their Euler characteristic and their global structures \cite{curvaturehomology:Goldberg:98}.
J. Lerey \cite{LERAY:RESIDUE:59} and F. Norguet \cite{NORGUET:RESIDUE:59} extended the concept of a \emph{residue} for general categories and topological spaces in relation to homology theory.

\begin{figure}[t]
  \begin{center}
    \subfigure[Elements from two $1$-dimensional homology classes on $\mathbb{S}^1 \times \mathbb{S}^1$. In this paper we do not consider the problem of directly identifying homology classes on non-Euclidean manifolds such as this.]{
      \label{fig:homology}
      \includegraphics[width=0.45\textwidth, trim=220 180 220 220, clip=true]{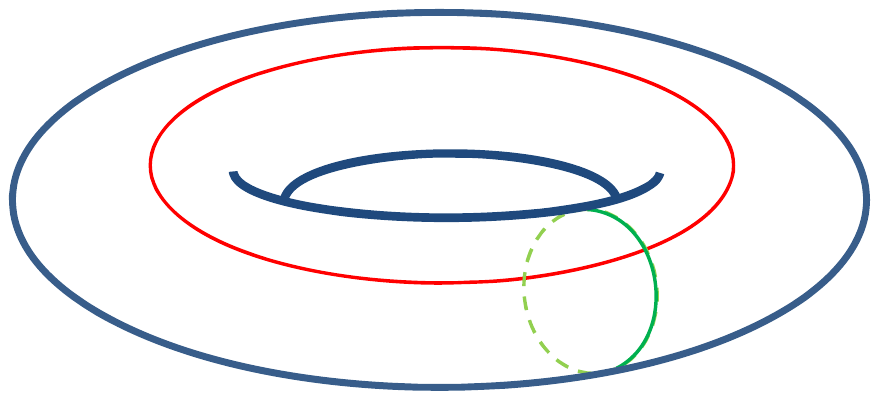}
    } \hspace{0.01in}
    \subfigure[Elements from two $1$-dimensional homotopy classes in $\mathbb{R}^2 \setminus \widetilde{\mathcal{S}}$. In this paper we consider the problem of identifying similar homotopy-like equivalence classes in $\mathbb{R}^D$, with $(D-N)$-dimensional discontinuities (or singularity manifolds), $\widetilde{\mathcal{S}}$.]{
      \label{fig:homotopy}
      \includegraphics[width=0.45\textwidth, trim=110 80 110 80, clip=true]{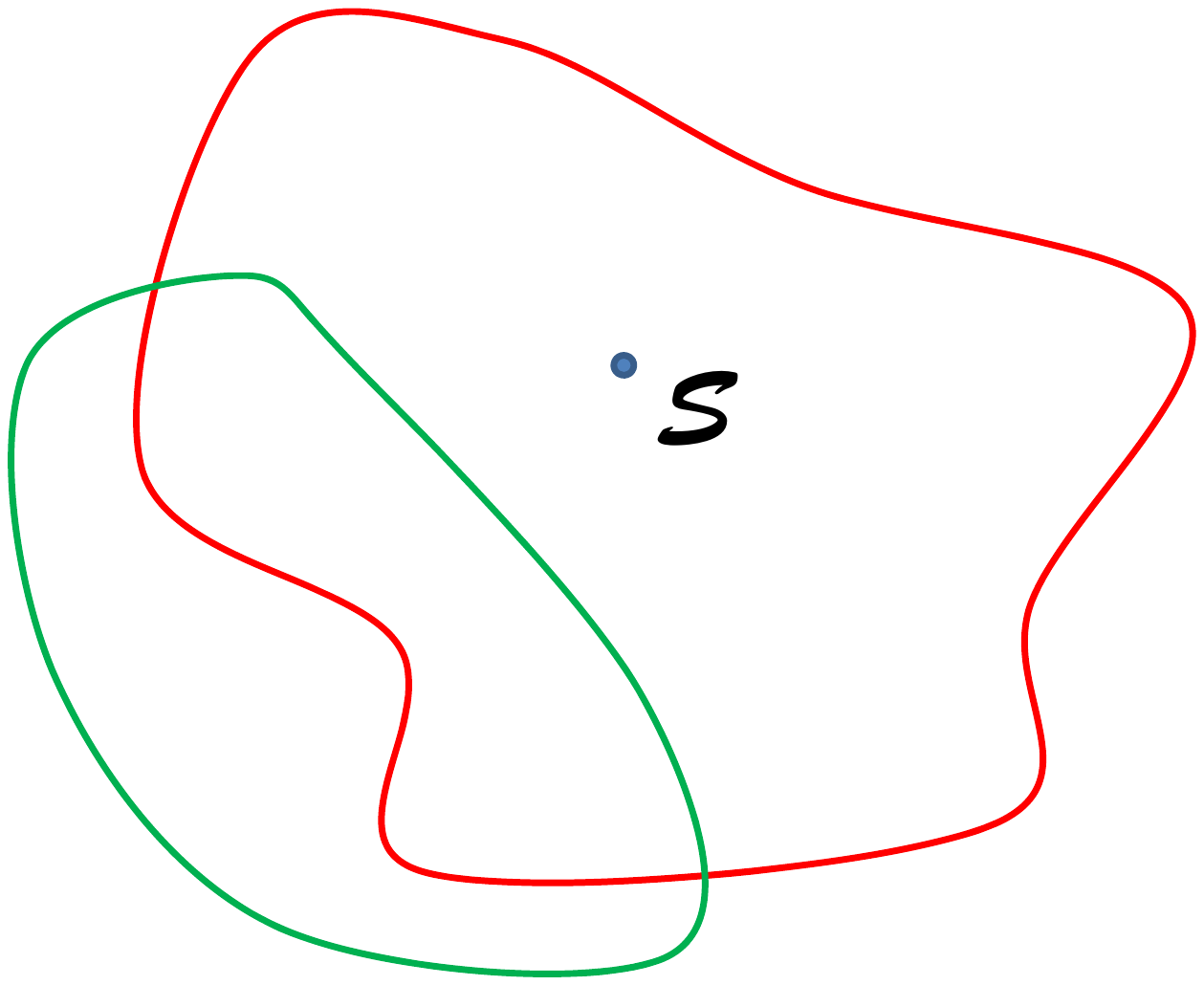}
    }
  \end{center}
  \caption{}
\end{figure}

Another recent development in the study of equivalence relations between manifolds is cobordism theory \cite{Madsen:Cobordism,May:AlgebraicTopology}. Cobordism is a much broader equivalence relation, and forms the basis for surgery theory. In this paper we extensively use some of the concepts from cobordism theory.

In this paper we consider homotopy-like equivalence classes of certain \emph{candidate manifolds} embedded in high-dimensional Euclidean space punctured by certain boundaryless \emph{singularity manifolds}. The term \emph{puncture} is used in a broader sense, and the punctures themselves can be sub-manifolds of the Euclidean space. The presence of those singularity manifolds (or discontinuities)
within the ambient Euclidean space of dimension one less than the complementary dimension, 
is responsible for inducing the equivalence relationship between the candidate manifolds, which we call $\chi$-homotopy.
Given a topological space $X$, $e:X\to Y$ represents an embedding of $X$ in $Y$. Two such embeddings, $e_1$ and $e_2$, are said to be homotopic if there exists a continuous mapping $h:X\times[0,1]\to Y$ such that $h(\cdot,0)\equiv e_1$ and $h(\cdot,1)\equiv e_2$. As evident, since the base topological space, $X$, is same for both $e_1$ and $e_2$, the notion of homotopy between two same dimensional sub-manifolds of $Y$ requires homeomorphism of the sub-manifolds. The topological space, $Y$, that we are concerned about in this paper is $\mathbb{R}^D \setminus \widetilde{\mathcal{S}}$, which is the $D$-dimensional Euclidean space, but with certain discontinuities.
$\chi$-homotopy (as we will call the equivalence relation under study) is, in essence, very similar to homotopy equivalence. However, borrowing certain concepts from cobordism theory, in the definition of our equivalence relation we allow for surgical removal of \emph{pinch points}, \emph{creases} and similar singularities on the candidate manifolds so that even non-homeomorphic sub-manifolds of $\mathbb{R}^D \setminus \widetilde{\mathcal{S}}$ can be $\chi$-homotopic (Figure~\ref{fig:equiv-relation-illustration}).

We present a simplified analysis for such equivalence classes, and design a particular class invariant in terms of integration of a differential form over the candidate manifolds, thus identifying and classifying the different $\chi$-homotopy classes of such manifolds. We also present formulae and techniques for computing that integration.
We prove the validity and applicability of the proposed theory using numerical examples as well as using applications to robot path planning problem.

\subsection{Motivation} \label{sec:motivation}

One of the primary motivations behind this analysis is the unification and generalization of different, seemingly unrelated theorems from theory of Complex Analysis, Electromagnetism and Electrostatics, which define homotopy-like equivalence relation between manifolds embedded in Euclidean spaces. However, as we will discuss later, these relations are more general than the standard notion of homotopy equivalence, and extends to cobordism equivalence.

\begin{figure}[t]
  \begin{center}
    \subfigure[$\int_{\omega_1} \frac{1}{z-\mathbf{s}} \d z = \int_{\omega_2} \frac{1}{z-\mathbf{s}} \d z \neq \int_{\omega_3} \frac{1}{z-\mathbf{s}} \d z$, although $\omega_1$ is not homeomorphic to $\omega_2$ (the later having $2$ connected components). In this case $\omega_1$ and $\omega_2$ are in the same $\chi$-homotopy equivalence class, but $\omega_3$ is in a different class.]{
      \label{fig:cauchy-residue-equivalence}
      \includegraphics[width=0.45\textwidth, trim=150 120 150 120, clip=true]{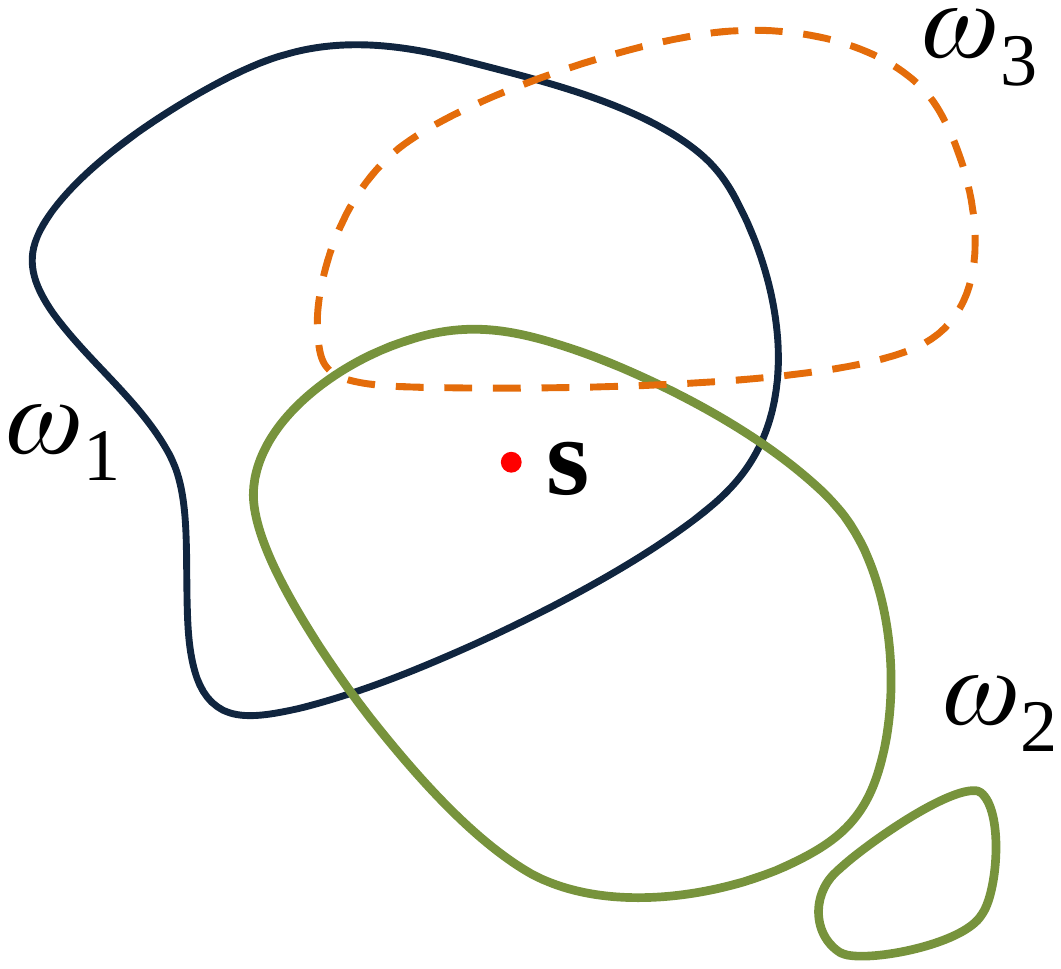}
    } \hspace{0.01in}
    \subfigure[The net flux of the electric fields due to a point charge, $\mathbf{s}$, is equal through the sphere as well as the torus. This makes the torus of the same $\chi$-homotopy equivalence class as the sphere, both of which enclose the point charge.]{
      \label{fig:gauss-equivalence}
      \includegraphics[width=0.45\textwidth, trim=0 0 0 0, clip=true]{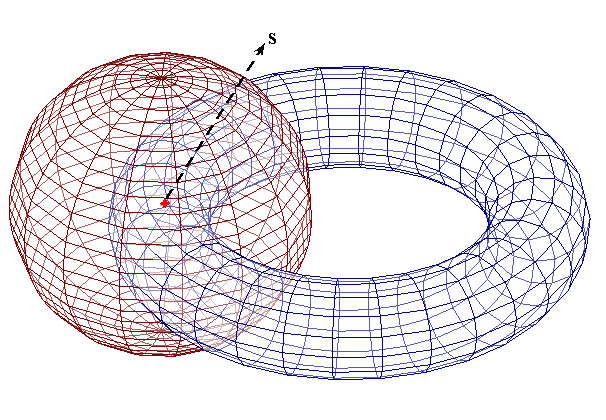}
    }
  \end{center}
  \caption{Homeomorphism is not a requirement for $\chi$-homotopy of sub-manifolds of same dimension.\label{fig:equiv-relation-illustration}}
\end{figure}

The Cauchy integral theorem along with the Residue theorem \cite{Theodore:Complex:01} from complex analysis, for example, prescribes differential $1$-forms \cite{DiffGeo:Yves:00} on $\mathbb{R}^2$, the integration of which along $1$-dimensional sub-manifolds (curves) gives numbers that can identify the equivalence class of the curves that can be continuously deformed into one another without intersecting any of the poles of the integrand. In particular, the differential $1$-forms take the form
\begin{equation} f(z) \d z \equiv \left[ g(x,y)\d x - h(x,y) \d y, ~~g(x,y)\d y + h(x,y) \d x \right]^T \label{eq:D2N2-intro} \end{equation}
where $f(z) \equiv f(x+iy) \equiv \left[ g(x,y), ~h(x,y)\right]^T$. Note that the vector quantities are a mere representation of the real and imaginary parts of the complex quantities. The constraints are that $f(z)$ be a complex analytic function everywhere on the complex plane except for poles at isolated points in $\widetilde{\mathcal{S}}$, \emph{i.e.} $\nabla^2 g = \nabla^2 h = 0$, and $g$ and $h$ are harmonic conjugates everywhere in $\mathbb{R}^2$, except for the points in $\widetilde{\mathcal{S}}$, where they have singularities. See Figure \ref{fig:abstract:a}. However, one interesting property of this equivalence class, which sets it aside from the standard notion of homotopy is that the curves need not be homeomorphic (see Figure~\ref{fig:cauchy-residue-equivalence}).

Similarly, the Biot-Savart law \cite{Electrodynamics:Griffiths:98} from the theory of electromagnetism prescribes $1$-forms that enables the identification of similar equivalence classes of curves embedded in $\mathbb{R}^3$. These classes are induced by a set of singularity manifolds, $\widetilde{\mathcal{S}}$, that can be identified with current carrying curves in the ambient space, and Ampere's law provides a formula for the current enclosed by any closed curve. These $1$-forms, one for each connected component of $\widetilde{\mathcal{S}}$, when integrated along closed curves in $\mathbb{R}^3$, gives numbers that give the current enclosed by the closed curve, thus enabling the identification of equivalence class of the curve in $\mathbb{R}^3 \setminus \widetilde{\mathcal{S}}$, such that curves in the same class can be continuously deformed into one another. In particular, the $1$-form for a particular connected component $S\in\widetilde{\mathcal{S}}$ is given by,
\begin{equation} \mathbf{B}\cdot \d \mathbf{l} ~~~\equiv~~~ \mathbf{B}\cdot\mathbf{\hat{x}} ~~\d x  ~~+~~ \mathbf{B}\cdot\mathbf{\hat{y}} ~~\d y  ~~+~~ \mathbf{B}\cdot\mathbf{\hat{z}} ~~\d z \label{eq:D3N2-intro} \end{equation}
where $\mathbf{B}$ is a function of the spatial coordinates and a connected component of the singularity manifold, $S$, and is given by, $\mathbf{B} = \frac{1}{4\pi} \int_{S} \frac{(\mathbf{r}' - \mathbf{r}) \times \d \mathbf{r}'}{\| \mathbf{r}' - \mathbf{r} \|^3}$. Here we use bold face to indicate vectors in $\mathbf{R}^3$, with $\mathbf{r} = [x,y,z]^T$, and unit vectors $\mathbf{\hat{x}} = [1,0,0]^T$, $\mathbf{\hat{y}} = [0,1,0]^T$ and $\mathbf{\hat{z}} = [0,0,1]^T$, and the \emph{cross product}, $\textrm{``}\times\textrm{''}: \mathbb{R}^3 \times \mathbb{R}^3 \rightarrow \mathbb{R}^3$, represents the standard cross product operation for $3$-vectors. See Figure \ref{fig:abstract:b}.

Finally, the Gauss's law from electrostatics, and in general the \emph{Gauss Divergence Theorem}, prescribes the following $2$-forms that enable identification of equivalence classes of surfaces embedded in $\mathbb{R}^3$, induced by point singularities $S\in\widetilde{\mathcal{S}}$,
\begin{equation} \mathbf{F}\cdot \d \mathbf{A} ~~~\equiv~~~ \mathbf{F}\cdot\mathbf{\hat{x}} ~~\d y \wedge \d z  ~~+~~ \mathbf{F}\cdot\mathbf{\hat{y}} ~~\d z \wedge \d x  ~~+~~ \mathbf{F}\cdot\mathbf{\hat{z}} ~~\d x \wedge \d y \label{eq:D3N3-intro} \end{equation}
where $\mathbf{F}$ is a function of the spatial coordinates and the singularity point $S$ and is given by, $\mathbf{F} = \frac{1}{4\pi} \frac{\mathbf{r} - \mathbf{r}_S }{\| \mathbf{r} - \mathbf{r}_S \|^3}$, where $\mathbf{r}_S$ is the coordinate of $S$. We can identify the singularity points as point charges in the space, and the integration of the mentioned $2$-form on a closed surface gives the flux of the electrostatic field through the surface, which in turn is equal to the charge enclosed by the surface. See Figure \ref{fig:abstract:c}. Once again, one can see from Figure~\ref{fig:gauss-equivalence} how homeomorphism is not required for being in the same equivalence class.

In this paper for convenience of writing, we will use the term ``$\chi$-homotopy class'' to denote the equivalent class described above. We will call two manifolds to be ``$\chi$-homotopic'' if they belong to the same $\chi$-homotopy class. A more rigorous definition is given in Definition~\ref{def:homotopy}.

\subsection{Organization of this paper}

In Section \ref{sec:prob-def} we give the formal definition of $\chi$-homotopy which is the subject of study in this paper.
In Section \ref{sec:development}, we develop the main differential $(N-1)$-form, $\psi$ (Equation (\ref{eq:omg-final}) along with Equation (\ref{eq:U-final})), which upon integration over a $(N-1)$-dimensional candidate manifold, $\omega$, gives a number (or a set of numbers) that can uniquely identify the $\chi$-homotopy class of $\omega$. We then show that this differential form indeed reduces to the well-known expressions for $\chi$-homotopy class invariants from theory of complex analysis, electromagnetism and electrostatics upon plugging in the appropriate values of $D$ and $N$ for those special cases (Section \ref{sec:theo-validation}).

In Section \ref{sec:computation} we describe how the integrations developed can actually be computed numerically by triangulation of the manifolds and by defining an increasing coordinate system for each simplex.
The 
numerical computation using the formulae developed in the paper are illustrated in Section \ref{sec:results}. The results confirm the validity of the generalized theory we have proposed in higher dimensional spaces.
We also describes a particular application from robot path planning problems, where $N=2$.

For better readability, we have placed proofs of some of the theorems and lemmas presented in the paper as well as some of the detailed discussions in the Appendix.



\section{Designing a $\chi$-homotopy Class Invariant} \label{sec:main}

\subsection{Problem Description} \label{sec:prob-def}

\begin{figure*}                                                            
  \centering
  \subfigure[$D=2, N=2$]{
      \label{fig:abstract:a}
      \includegraphics[width=0.3\textwidth, trim=100 100 150 130, clip=true]{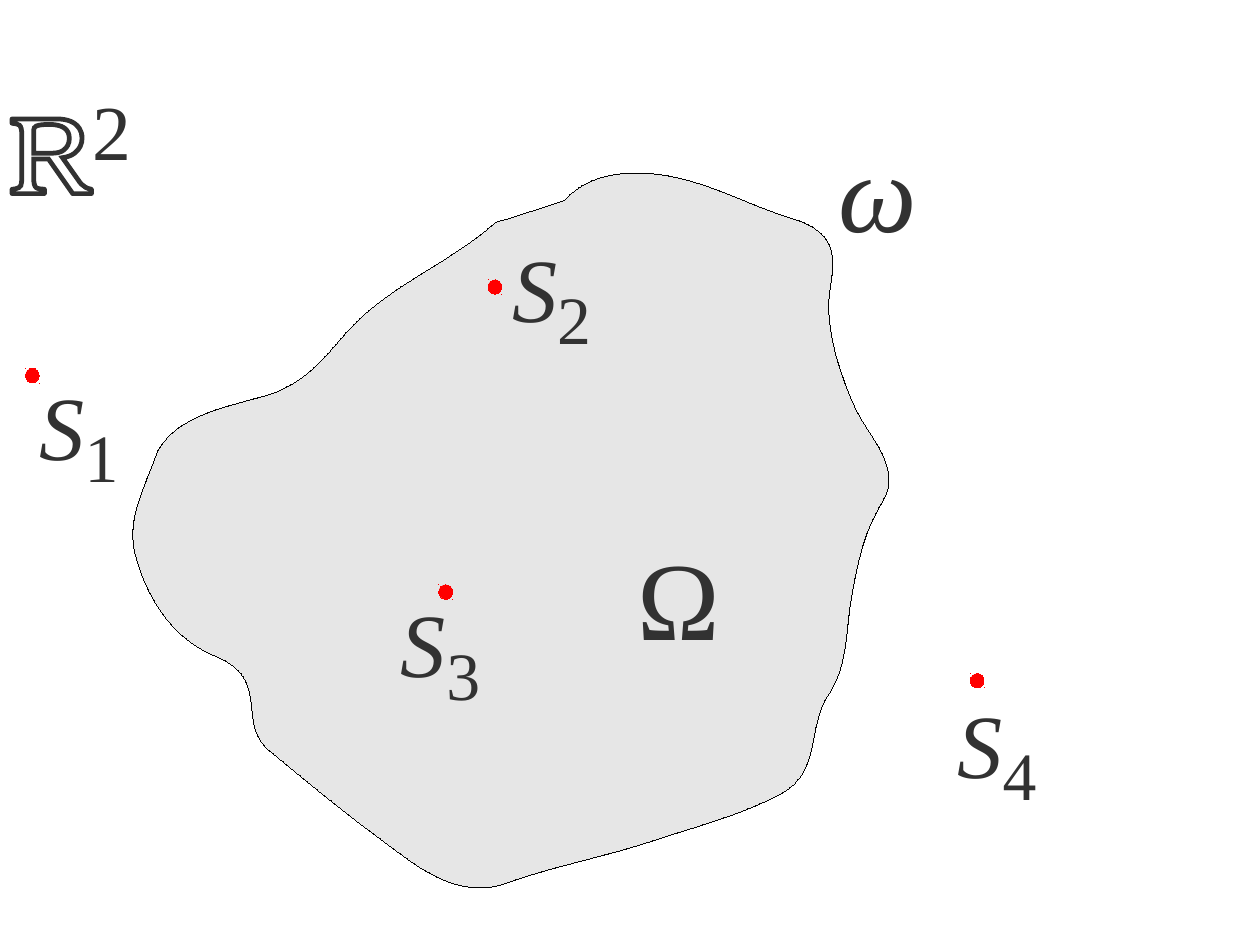}
      }
  \subfigure[$D=3, N=2$]{
      \label{fig:abstract:b}
      \includegraphics[width=0.3\textwidth, trim=50 150 150 60, clip=true]{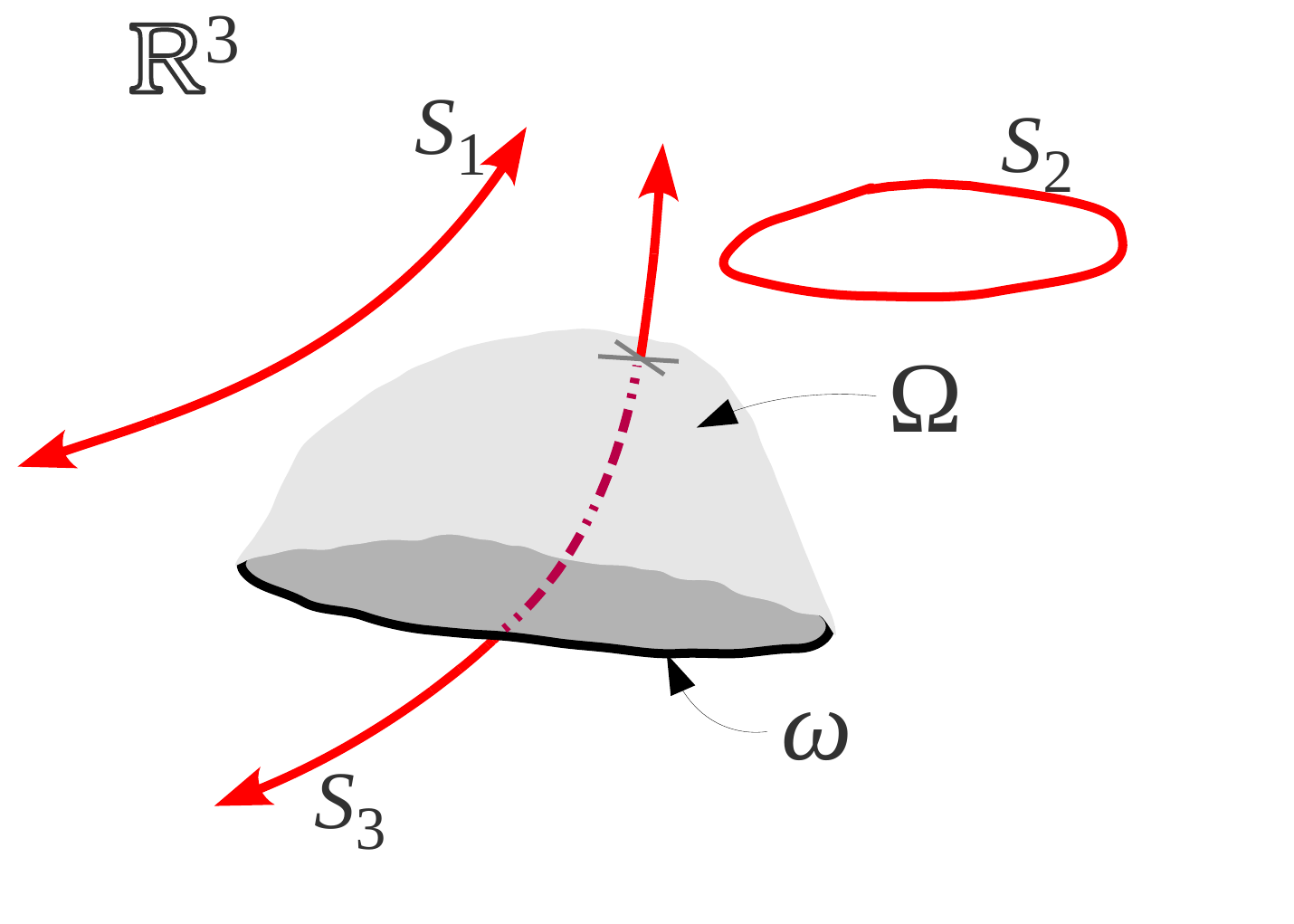}
      }
  \subfigure[$D=3, N=3$]{
      \label{fig:abstract:c}
      \includegraphics[width=0.3\textwidth, trim=50 100 150 130, clip=true]{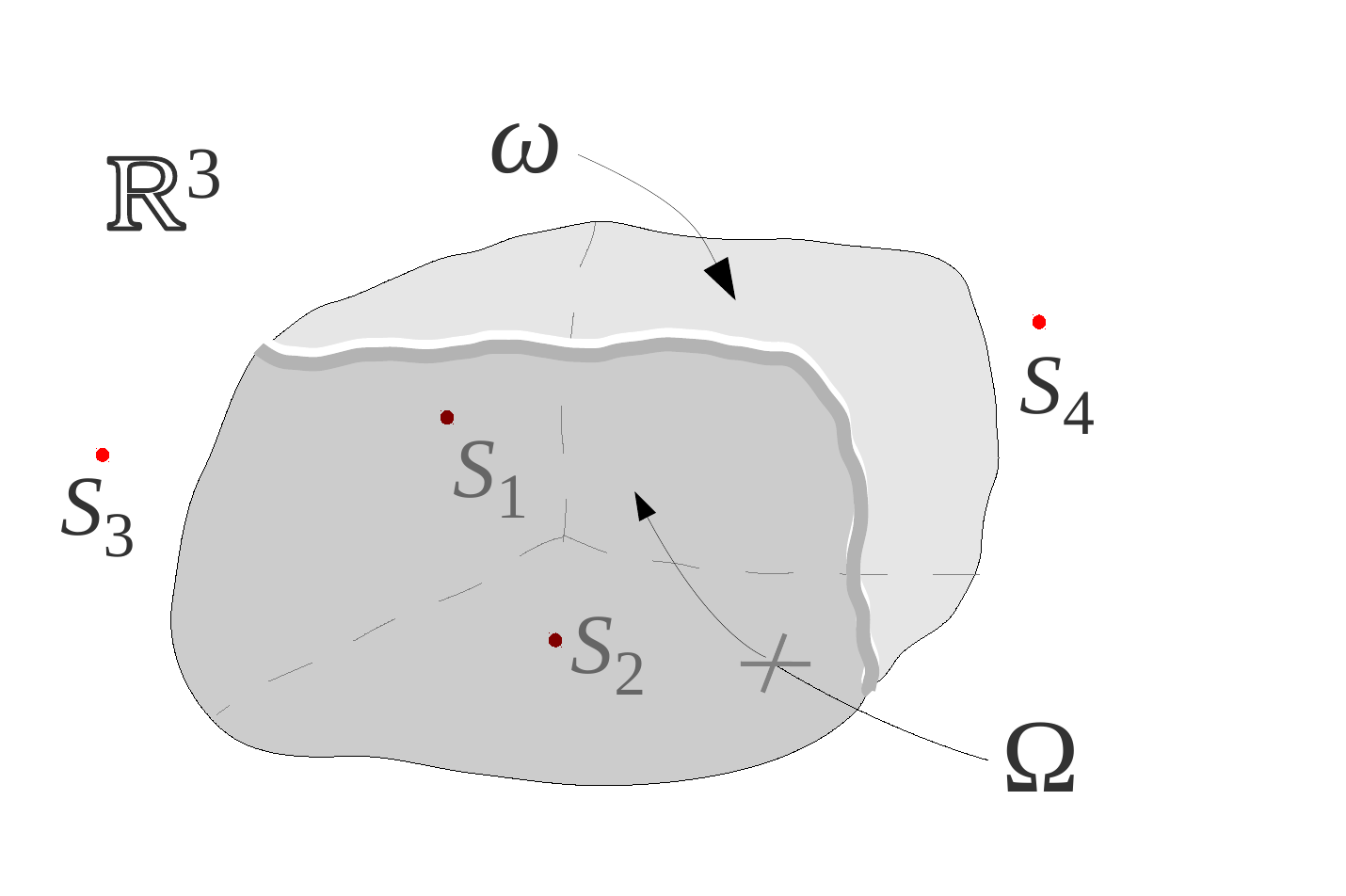}
      }
  \caption{Schematic illustration of some lower dimensional cases of the problem. The Cauchy Residue theorem can be applied to (a), Ampere's law to (b), and Gauss Divergence theorem to (c).}
  \label{fig:abstract}                                             
\end{figure*}

We will only consider manifolds embedded or immersed in the $D$-dimensional Euclidean space, $\mathbb{R}^D$.
So in the discussion that follows, whenever we loosely use the term ``manifold'', we will actually be referring to a manifold along with its embedding or immersion in $\mathbb{R}^D$.
Also, as mentioned earlier, all manifolds are assumed to be orientable and bounded unless otherwise specified.

We are given $(D-N)$-dimensional \emph{singularity manifolds}, $S_1,S_2,\cdots,S_m$, embedded in $\mathbb{R}^D$, each of which is connected and boundaryless. We define the set $\widetilde{\mathcal{S}} = S_1 \cup S_2 \cup \cdots \cup S_m$ to be the set of all singularity manifolds.

The singularity manifolds induce the notion of the equivalence classes (which we will shortly define as ``$\chi$-homotopy classes'') for $(N-1)$-dimensional boundaryless \emph{candidate manifolds}, $\omega$, embedded in $\mathbb{R}^D$, and which can be expressed as the boundary of a $N$-dimensional manifold (which need not be embeddable in $\mathbb{R}^D$, but should be immersed in it).
It is the equivalence relation of the candidate manifolds in $\mathbb{R}^D \setminus \widetilde{\mathcal{S}}$ which is of interest to us.
Of course we need to have $\omega\cap\widetilde{\mathcal{S}}=\emptyset$.

\begin{figure}[t]
  \begin{center}
    \subfigure[The $1$-dimensional curve is homotopy equivalent to the $3$-dimensional manifold embedded in $\mathbb{R}^3$.]{
      \label{fig:skeleton}
      \includegraphics[width=0.4\textwidth, trim=90 100 30 130, clip=true]{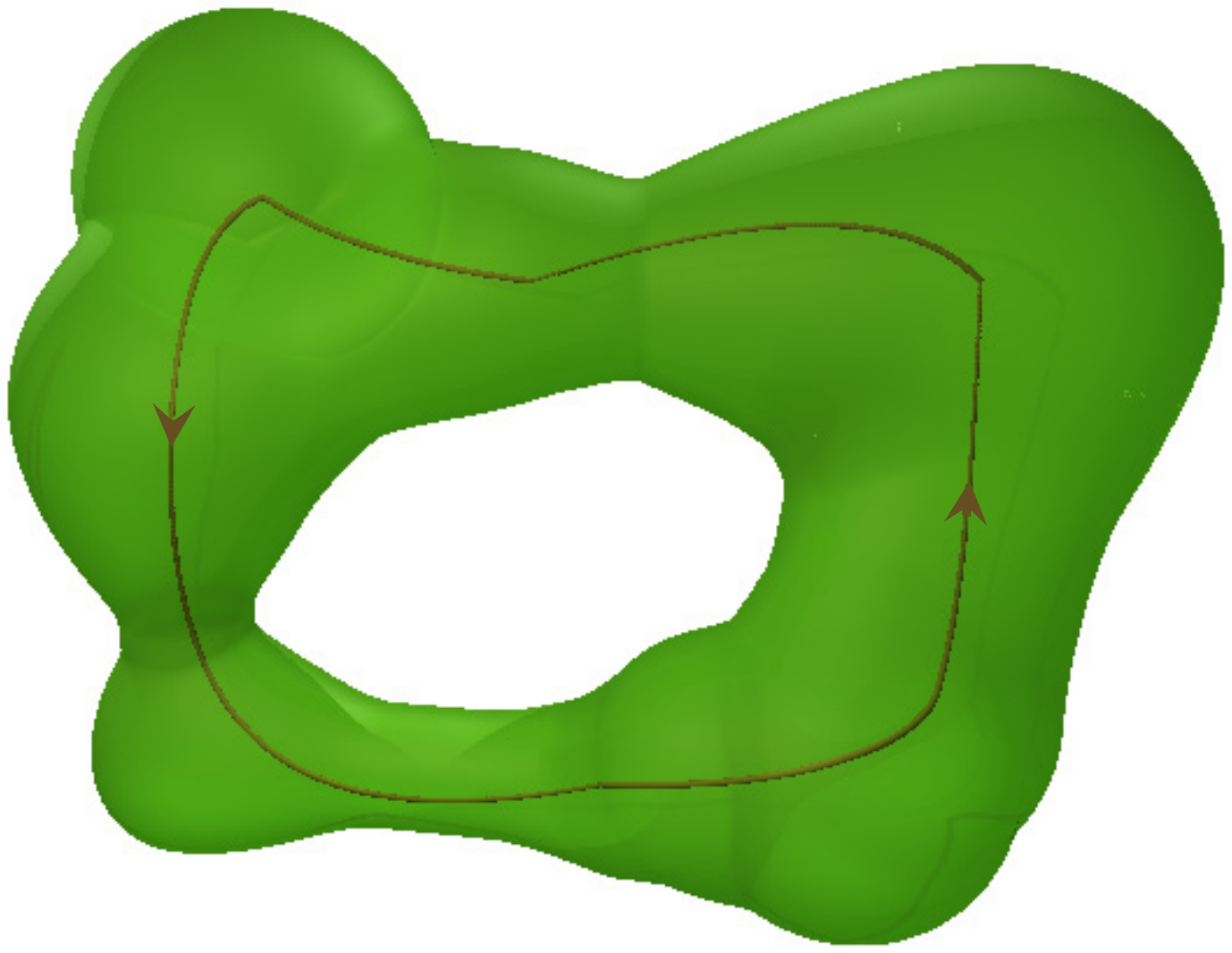}
    } \hspace{0.02in}
    \subfigure[The exact differential $N$-form, $\d\psi_S$, integrated over $N$-dimensional balls. We seek a $\psi_S$ such that in this figure $\int_{B_1} \d\psi_S = \int_{B_3} \d\psi_S = 0$, but $\int_{B_2} \d\psi_S = \pm 1$.]{
      \label{fig:d-psi-balls}
      \includegraphics[width=0.35\textwidth, trim=150 100 150 100, clip=true]{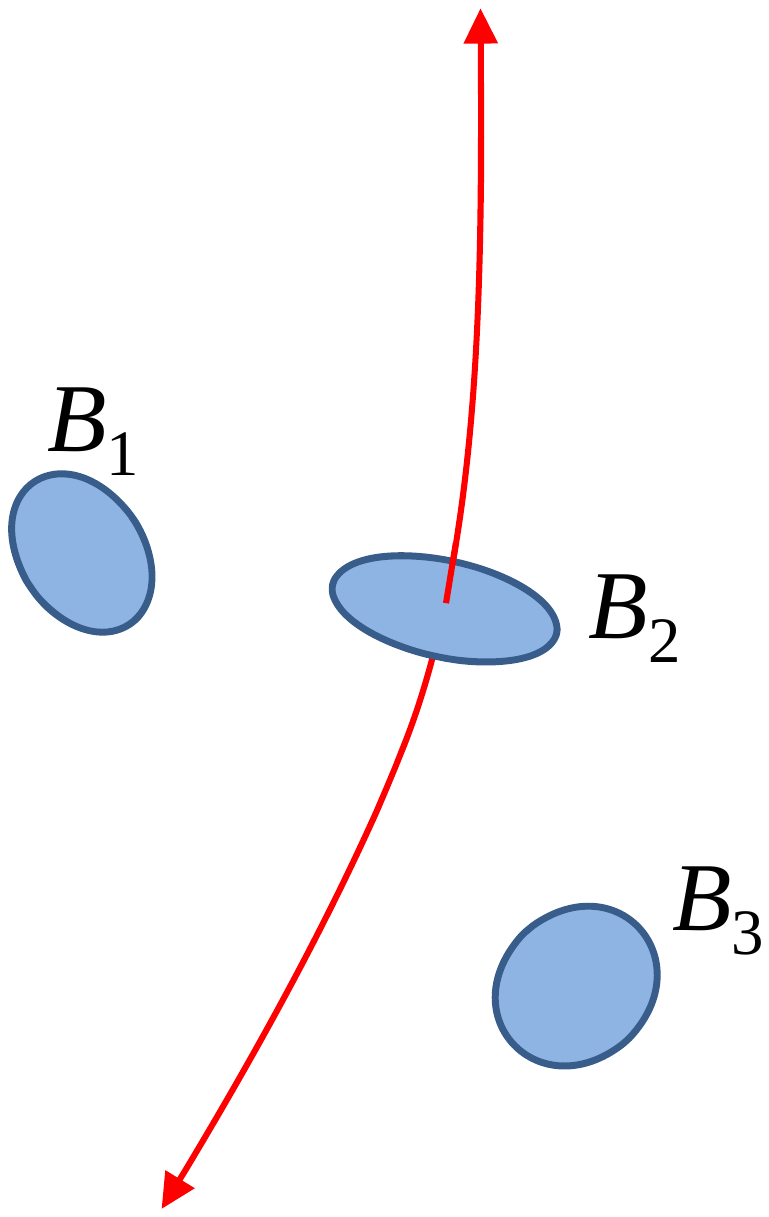}
    }
  \end{center}
  \vspace{-0.2cm}
  \caption{\label{fig:skeleton-analytic}}
\end{figure}

\begin{definition}[$\mathbf{\varpi}^n_d$]
 For $n<d$, we use the notation $\mathbf{\varpi}^n_d$ to denote the set of all $n$-dimensional boundaryless sub-manifolds of $\mathbb{R}^d$ (immersed or embedded) that can be expressed as the boundary of some $(n+1)$-dimensional manifold immersed or embedded in $\mathbb{R}^d$.
\end{definition}
Thus, according to this definition, $\omega \in \mathbf{\varpi}^{N-1}_D$.

\begin{lemma}[Path Connectedness of $cl(\mathbf{\varpi}^n_d)$] \label{lemma:path-connected}
 The closure of $\mathbf{\varpi}^n_d$, i.e. $cl(\mathbf{\varpi}^n_d)$, is path connected.
\end{lemma}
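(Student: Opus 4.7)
The plan is to build an explicit path in $cl(\mathbf{\varpi}^n_d)$ between any two $M_1, M_2 \in \mathbf{\varpi}^n_d$ by contracting each $M_i$ to a point (a degenerate limit in the closure) and translating between the two points. The natural topology here is the Hausdorff metric on compact subsets of $\mathbb{R}^d$, under which a singleton $\{p\}$ legitimately lies in $cl(\mathbf{\varpi}^n_d)$, since, for example, arbitrarily small spheres around $p$ are elements of $\mathbf{\varpi}^n_d$ (the $n$-sphere bounds the $(n+1)$-ball).

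First I would contract any $M \in \mathbf{\varpi}^n_d$ to a chosen point $p \in \mathbb{R}^d$. Writing $M = \partial \Omega$ for some $(n+1)$-dimensional manifold $\Omega$ immersed in $\mathbb{R}^d$, consider the homothety family $\Omega_t := p + t(\Omega - p)$ for $t \in (0,1]$. Each $\Omega_t$ is again an immersed $(n+1)$-manifold in $\mathbb{R}^d$, and its boundary $M_t := p + t(M - p)$ belongs to $\mathbf{\varpi}^n_d$. Because $M_t \subset \overline{B}(p, t \cdot \mathrm{diam}(M))$, the Hausdorff distance from $M_t$ to $\{p\}$ tends to $0$ as $t \to 0^{+}$, so the family extends continuously to $t \in [0,1]$ with $M_0 = \{p\} \in cl(\mathbf{\varpi}^n_d)$. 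Any two such point-limits $\{p\}$ and $\{q\}$ are then joined inside $cl(\mathbf{\varpi}^n_d)$ by the obviously Hausdorff-continuous path $t \mapsto \{(1-t)p + tq\}$. Concatenating the contraction of $M_1$ to $\{p_1\}$, the translation from $\{p_1\}$ to $\{p_2\}$, and the reverse of the contraction of $M_2$ to $\{p_2\}$ yields a continuous path from $M_1$ to $M_2$ in $cl(\mathbf{\varpi}^n_d)$.

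The one real sticking point is pinning down which topology on $\mathbf{\varpi}^n_d$ the paper intends, since the lemma statement does not specify one. The argument above naturally fits the Hausdorff metric, which admits point-sets as limits of shrinking manifolds. In a stronger topology demanding that limits remain non-degenerate $n$-dimensional submanifolds (for instance a $C^{k}$ topology on immersions), I would instead terminate each contraction at a tiny $n$-sphere $p_i + \epsilon \cdot \mathbb{S}^n \in \mathbf{\varpi}^n_d$, use that small spheres based at different points are connected by translating inside $\mathbf{\varpi}^n_d$ itself, and only appeal to the closure for the final $\epsilon \to 0$ step if needed. Either way, the essential point is the same: the null-cobordism condition defining $\mathbf{\varpi}^n_d$ furnishes the $(n+1)$-manifold $\Omega$ that lets any candidate be shrunk to arbitrarily small size in a canonical way.
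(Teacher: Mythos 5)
Your argument hinges on taking the closure of $\mathbf{\varpi}^n_d$ in the Hausdorff metric on compact subsets of $\mathbb{R}^d$, and that choice is where the proof breaks down relative to what the lemma has to mean in this paper. Under the Hausdorff metric the closure is enormously larger than intended: any nonempty compact set is a Hausdorff limit of finite unions of tiny $n$-spheres, so $cl(\mathbf{\varpi}^n_d)$ would contain \emph{every} compact subset of $\mathbb{R}^d$ --- not just your singletons but also arcs, Cantor sets, etc. That makes the lemma true but vacuous, and worse, it destroys Definition~\ref{def:homotopy} and Theorem~\ref{theorem:homotopic}: for $D=2$, $N=2$ with a single point puncture at the origin, one can pass Hausdorff-continuously from the unit circle to a shrinking circular arc (each arc being a Hausdorff limit of thin ``stadium'' loops in $\mathbf{\varpi}^1_2$), collapse the arc to a point, slide the point far away, and re-inflate a circle --- all without touching the origin --- thereby making curves of different winding number $\chi$-homotopic. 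So the topology the paper intends for $cl(\mathbf{\varpi}^n_d)$ must be far more restrictive: its own proof adds to $\mathbf{\varpi}^n_d$ only the singular level sets of a Morse function on a cobordism, i.e.\ $n$-dimensional objects with isolated removable singularities (pinch points, creases). Under that reading a singleton $\{p\}$ is \emph{not} in the closure for $n\geq 1$, and your collapse-to-a-point path is unavailable.

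Your fallback (stop the contraction at a tiny $n$-sphere and translate spheres around) does not rescue the argument, because the homothety $p+t(M-p)$ terminates at a tiny copy of $M$, not a tiny sphere; getting from a small copy of $M_1$ to a small copy of $M_2$ when they are not homeomorphic is precisely the content of the lemma, and scaling does nothing to address it. The paper closes exactly this gap with cobordism theory: every element of $\mathbf{\varpi}^n_d$ bounds, hence has vanishing Stiefel--Whitney numbers, hence any two elements are cobordant; a Morse function on the cobordism $(W;M_1,M_2)$ then yields a one-parameter family of level sets that lies in $\mathbf{\varpi}^n_d$ except at finitely many critical values, where the level set acquires only a removable singularity and so lies in the (mildly enlarged) closure. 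That construction is also what the proof of Theorem~\ref{theorem:homotopic} reuses (the swept volume $\Delta\Omega$), so the cobordism route is not an optional stylistic choice here --- it is the mechanism the rest of the paper depends on.
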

\begin{proof}
 See Appendix~\ref{appendix:pathconnected} for detailed proof.
\myqed \end{proof}

\begin{definition}[$\chi$-homotopy in punctured Euclidean space] \label{def:homotopy}
 Given a fixed set of $(D-N)$-dimensional singularity manifolds, $\widetilde{\mathcal{S}}$, two $(N-1)$-dimensional candidate manifolds, $\omega_1, \omega_2 \in \mathbf{\varpi}^{N-1}_D$, are $\chi$-homotopic
 iff there exists a continuous path $\phi: [0,1] \rightarrow cl(\mathbf{\varpi}^{N-1}_D)$ such that $\phi(0) = \omega_1$, $\phi(1) = \omega_2$ and $\phi(\alpha) \cap \widetilde{\mathcal{S}} = \emptyset, ~\forall \alpha\in [0,1]$. In simple terms, $\omega_1$ and $\omega_2$ are $\chi$-homotopic iff one can be continuously deformed (including surgically cutting out of removable singularities, and gluing operations performed on it, for removal of ``pinch'' points, ``creases'' or other removable singular sub-manifolds) into the other without intersecting any of the singularity manifolds $\widetilde{\mathcal{S}}$ (\emph{i.e.} homotopic in $\mathbb{R}^D \setminus \widetilde{\mathcal{S}}$), and without creating a boundary for the manifold at any stage of the deformation.
\end{definition}

The path connectedness of $cl(\mathbf{\varpi}^{N-1}_D)$ from Lemma \ref{lemma:path-connected} has been assumed in the above definition. Thus, in absence of any singularity manifold, \emph{i.e.} $\widetilde{\mathcal{S}}=\emptyset$, all pairs of $\omega_1, \omega_2 \in \mathbf{\varpi}^{N-1}_D$ are $\chi$-homotopic.

\begin{note} \label{note:homotopy-vs-chi-homotopy}
It is important to note that the above definition of $\chi$-homotopy is solely based on the existence of singularity manifolds, $\widetilde{\mathcal{S}}$, and not on the topology of $\omega_1$ or $\omega_2$. For example, as discussed earlier, $\omega_1$ and $\omega_2$ may not be homeomorphic, but still can be $\chi$-homotopic. As it will become clear from the proofs of Theorem \ref{theorem:homotopic} and Lemma \ref{lemma:path-connected}, the equivalence relation of $\chi$-homotopy is an amalgamation of the standard equivalence of homotopy in $\mathbb{R}^D \setminus \widetilde{\mathcal{S}}$ and the equivalence of cobordism. This is partially illustrated in Figure~\ref{fig:equiv-relation-illustration}.
However, if we restrict the candidate manifolds to a particular homeomorphism class, then clearly $\chi$-homotopy becomes same as homotopy in $\mathbb{R}^D \setminus \widetilde{\mathcal{S}}$.
However, it's also worth noting that it is not possible to substitute $\chi$-homotopy with standard homotopy even under such circumstances, since the construction of the differential $(N-1)$-form, $\psi$, and the proof of Theorem~\ref{theorem:homotopic} (Section~\ref{appendix:homotopy-lemma}) relies on cobordism among the points on $cl(\mathbf{\varpi}^{N-1}_D)$ connecting $\omega_1$ and $\omega_2$.
\end{note}


By definition, $\omega$ can be represented as boundaries of some $N$-dimensional manifold immersed in the same space. Such a manifold is represented by $\Omega$ (see Figures~\ref{fig:abstract}).
Given a candidate manifold, $\omega$, there can be infinitely many $\Omega$ such that $\omega = \partial \Omega$. Thus we define the set $\mathbf{\Omega}(\omega)$ to be the set of all $N$-dimensional manifolds immersed in $\mathbb{R}^D$ whose boundary is $\omega$.

One of the consequences of having $\dim(\omega) + 1 + \dim(\widetilde{\mathcal{S}}) = \dim(\Omega) + \dim(\widetilde{\mathcal{S}}) = D$ is that $\Omega \in \mathbf{\Omega}(\omega)$ will, in general, intersect $\widetilde{\mathcal{S}}$ transversely at isolated points of $0$-dimension (or, we can choose an $\Omega \in \mathbf{\Omega}(\omega)$ such that they do).
Thus we have the following lemma.
\begin{lemma}[Intersection between singularity manifold and \emph{inside} of a candidate manifold] \label{lemma:intersection}
 Given $(D-N)$-dimensional boundaryless singularity manifolds, $\widetilde{\mathcal{S}}$ (sub-manifolds of $\mathbb{R}^D$), and given a candidate sub-manifold $\omega\in \mathbf{\varpi}^{N-1}_D$ such that $\omega\cap\widetilde{\mathcal{S}}=\emptyset$, we can always find a $N$-dimensional sub-manifold $\Omega\in\mathbf{\Omega}(\omega)$ of $\mathbb{R}^D$, the interior of which is sufficiently smooth, and if $\widetilde{\mathcal{S}}$ intersects $\Omega$, it does so transversely at distinct points.
\end{lemma}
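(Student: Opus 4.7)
The plan is to invoke transversality theory in three stages: first produce any immersed filling of $\omega$, then perturb it in its interior to be transverse to $\widetilde{\mathcal{S}}$ while keeping the boundary $\omega$ fixed, and finally read off from dimension counting that the intersection is a discrete set of points.

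Stage one is immediate: because $\omega \in \mathbf{\varpi}^{N-1}_D$, there exists by definition a compact $N$-dimensional manifold $M$ with $\partial M$ diffeomorphic to the domain of $\omega$, together with an immersion $f_0 : M \to \mathbb{R}^D$ such that $f_0|_{\partial M}$ parametrizes $\omega$. Denote the resulting (possibly non-smooth) image by $\Omega_0 \in \mathbf{\Omega}(\omega)$. Stage two is the analytic core. Since $\omega \cap \widetilde{\mathcal{S}} = \emptyset$ and $\omega$ is compact, there is an open collar neighborhood $U$ of $\partial M$ in $M$ on which $f_0(U) \cap \widetilde{\mathcal{S}} = \emptyset$; in particular $f_0$ is vacuously transverse to $\widetilde{\mathcal{S}}$ on $U$. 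Applying Whitney's approximation theorem relative to $U$, we replace $f_0$ by a smooth immersion $f_1$ agreeing with $f_0$ on a smaller collar of $\partial M$ and $C^0$-close to $f_0$ elsewhere. Then the relative Thom transversality theorem (perturbing only on $M \setminus U$) yields a smooth immersion $f : M \to \mathbb{R}^D$, still agreeing with $f_0$ near $\partial M$ (so that $\partial(f(M)) = \omega$), such that $f$ is transverse to the smooth submanifold $\widetilde{\mathcal{S}}$. Set $\Omega = f(M)$.

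Stage three is dimension counting. At any point $\mathbf{p} \in \Omega \cap \widetilde{\mathcal{S}}$, transversality asserts
\[
T\Omega_\mathbf{p} + T\widetilde{\mathcal{S}}_\mathbf{p} \;=\; T\mathbb{R}^D_\mathbf{p}.
\]
Since $\dim T\Omega_\mathbf{p} + \dim T\widetilde{\mathcal{S}}_\mathbf{p} = N + (D-N) = D$, this sum is direct, and the intersection $\Omega \cap \widetilde{\mathcal{S}}$ is locally cut out as a smooth $0$-dimensional manifold. Now $\Omega$ is compact (being the image under a smooth map of a compact manifold) and $\widetilde{\mathcal{S}}$ is closed in $\mathbb{R}^D$ (compact and boundaryless), so their intersection is a compact $0$-dimensional set, hence finite; combined with transversality this says the intersection consists of finitely many distinct points, each a transverse crossing. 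Moreover, since the perturbation was supported away from $\partial M$, none of these points lies on $\omega$, consistent with the hypothesis $\omega \cap \widetilde{\mathcal{S}} = \emptyset$.

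The main obstacle is the relative nature of the perturbation: one needs transversality without disturbing the fixed boundary data $\omega$ and without losing the immersion property. Both issues are standard but require care, and are handled respectively by the collar-based relative versions of the Whitney and Thom theorems, together with the openness of the immersion condition in the strong Whitney topology, which lets the perturbation be taken small enough that $f$ remains an immersion.
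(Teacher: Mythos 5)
Your proposal is correct and follows essentially the same route as the paper: both arguments rest on the transversality theorem, using the fact that $\omega\cap\widetilde{\mathcal{S}}=\emptyset$ to keep the perturbation supported away from the boundary, and then counting dimensions ($N+(D-N)=D$) to conclude the transverse intersection is a discrete, hence finite, set of points. Your version is the more carefully executed of the two — the paper argues informally point by point, while you invoke the relative (collar-based) Thom transversality theorem and the openness of the immersion condition to justify the same perturbation globally.
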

\begin{proof}
The proof follows directly from transversality theorem \cite{DifferentialTopology:GP}.
 Say $\Omega$ and $\widetilde{\mathcal{S}}$ intersect at a point $\mathbf{p}\in\mathbb{R}^D$.
 Since $\dim(\widetilde{\mathcal{S}}) = D-N$, the vector space perpendicular to the tangent space $T_{\mathbf{p}} \widetilde{\mathcal{S}}$ is a $N$-dimensional vector space (let's call it $\mathcal{N}_{\mathbf{p}} \widetilde{\mathcal{S}}$).
 Since $\Omega$ is a $N$-dimensional manifold, it follows from the transversality theorem that we can deformed $\Omega$ by an arbitrary small amount so as to align the tangent space of $\Omega$ at $\mathbf{p}$ with $\mathcal{N}_{\mathbf{p}} \widetilde{\mathcal{S}}$.
 Thus, upon performing this deformation we will have $T_{\mathbf{p}} \Omega$ perpendicular to $T_{\mathbf{p}} \widetilde{\mathcal{S}}$, hence $\Omega$ and $\widetilde{\mathcal{S}}$ will be intersecting at a single point transversely in the neighborhood of $\mathbf{p}$. Thus, in similar ways, we can always resolve non-point intersections by performing small deformations of $\Omega$. Since $\omega=\partial\Omega$ does not intersect $\widetilde{\mathcal{S}}$, we won't encounter such a situation at the boundary, hence making sure that $\Omega\in\mathbf{\Omega}(\omega)$ even after we perform the deformations.
\myqed \end{proof}

The singularity manifolds can be of dimension greater than $D-N$, in which case we simply replace it with a $(D-N)$-dimensional \emph{homotopy equivalent} manifold, thus keeping our analysis applicable in such cases as well (see Figure~\ref{fig:skeleton} and Section~\ref{sec:application-robots}).

\begin{note} \label{note:connected-component}
In the discussions that follow, we will select one particular connected component from $\widetilde{\mathcal{S}}$ and refer to it as $S$. We will perform all the analysis involving only $S$ as the singularity manifold. Later on, using Lemma~\ref{lemma:multiple-sing-homotopy}, we will generalize the results to $\widetilde{\mathcal{S}}$ in Section~\ref{sec:multiple-sing-gen}.
\end{note}


Consider a $N$-dimensional oriented ball of radius $\epsilon$ immersed in $\mathbb{R}^D$ and centered at $\mathbf{r}\in\mathbb{R}^D$. We can choose an orientation of the immersion, and choose $\epsilon$ to be arbitrarily small, so that $B_\epsilon(\mathbf{r})$ intersects $S$ at most at one point (Figure~\ref{fig:d-psi-balls}).
Suppose there exists a smooth exact differential $N$-form, $\d\psi_S$, such that for a given ball $B_\epsilon(\mathbf{r})$,
\begin{equation} \label{eq:psi-basic-property}
 \int_{B_\epsilon(\mathbf{r})} \d\psi_S = \left\{ \begin{array}{l}
                                           \phantom{\pm} 0, ~~\textrm{ iff }~ B_\epsilon(\mathbf{r}) \cap S = \emptyset, \\
                                           \pm 1, ~~\textrm{ iff }~ B_\epsilon(\mathbf{r}) \cap S \neq \emptyset ~~\textrm{ and }~ \dim(B_\epsilon(\mathbf{r})\cap S)=0
                                          \end{array} \right.
\end{equation}
This is illustrated in Figure~\ref{fig:d-psi-balls}. The subscript `$S$' is to emphasize that the differential form depends on $S$. The orientation of $B_\epsilon(\mathbf{r})$ is related to the sign of the integration. Flipping orientation of the volume inside the ball will flip the sign of the integration.
We hence propose the following theorem.

\begin{theorem} \label{theorem:homotopic}
 Suppose there exists an exact differential $N$-form, $\d\psi_S$, with the property described in Equation (\ref{eq:psi-basic-property}), so that $\psi_S$ is a differential $(N-1)$-form. Then $\omega_1,\omega_2 \in \mathbf{\varpi}^{N-1}_D$ are $\chi$-homotopic if and only if
 \[
  \int_{\omega_1} \psi_S = \int_{\omega_2} \psi_S
 \]
\end{theorem}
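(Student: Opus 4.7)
The plan is to prove both implications by combining Stokes' theorem with an intersection-number calculation, and then, for the harder converse, to realize the equality of integrals as a cobordism in $\mathbb{R}^D\setminus S$. First observe that property~(\ref{eq:psi-basic-property}), together with the smoothness of $d\psi_S$ away from $S$, forces $d\psi_S\equiv 0$ on $\mathbb{R}^D\setminus S$: if $d\psi_S$ were nonzero at some point outside $S$, then by continuity a sufficiently small $N$-ball there would have a definite-signed nonzero integral, contradicting the zero case of~(\ref{eq:psi-basic-property}). For the forward direction, assume $\omega_1$ and $\omega_2$ are $\chi$-homotopic via $\phi:[0,1]\to cl(\mathbf{\varpi}^{N-1}_D)$. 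The trace $\mathcal{W}=\bigcup_{\alpha\in[0,1]}\phi(\alpha)$ is an immersed $N$-dimensional cobordism with $\partial\mathcal{W}=\omega_2-\omega_1$ and $\mathcal{W}\cap S=\emptyset$; any non-smooth intermediate slices permitted by working in the closure can be smoothed by an arbitrarily small perturbation that still avoids $S$. Stokes' theorem then gives
\[
    \int_{\omega_2}\psi_S - \int_{\omega_1}\psi_S \;=\; \int_\mathcal{W} d\psi_S \;=\; 0.
\]

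For the converse, assume $\int_{\omega_1}\psi_S=\int_{\omega_2}\psi_S$. By Lemma~\ref{lemma:intersection}, choose $\Omega_i\in\mathbf{\Omega}(\omega_i)$ so that each $\Omega_i\cap S$ consists of finitely many transverse intersection points $\{\mathbf{p}^i_j\}$. Excise small oriented $N$-balls $B^{\Omega_i}_\epsilon(\mathbf{p}^i_j)\subset\Omega_i$ around each; on the punctured $\Omega_i^\ast:=\Omega_i\setminus\bigcup_j B^{\Omega_i}_\epsilon(\mathbf{p}^i_j)$ we have $d\psi_S\equiv 0$, and Stokes' theorem combined with property~(\ref{eq:psi-basic-property}) applied to each excised ball yields
\[
    \int_{\omega_i}\psi_S \;=\; \sum_j \int_{\partial B^{\Omega_i}_\epsilon(\mathbf{p}^i_j)}\psi_S \;=\; \sum_j \int_{B^{\Omega_i}_\epsilon(\mathbf{p}^i_j)} d\psi_S \;=\; I(\Omega_i,S),
\]
the signed transverse intersection number of $\Omega_i$ with $S$. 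Hence the hypothesis forces $I(\Omega_1,S)=I(\Omega_2,S)=k$ for some $k\in\mathbb{Z}$.

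It remains to realize this equality as an actual $\chi$-homotopy, and this is the main obstacle. Form the immersed cobordism $M=\Omega_1\sqcup(-\Omega_2)$ from $\omega_1$ to $\omega_2$; its signed intersection count with $S$ equals $I(\Omega_1,S)-I(\Omega_2,S)=0$, so the $+$ and $-$ intersection points of $M$ with $S$ pair off. Connect each pair by a smooth arc in $\mathbb{R}^D\setminus S$ (using that the complement of a codimension-$N$ submanifold is path connected when $N\geq 2$; the $N=1$ case is elementary and handled component-wise) and perform surgery on $M$ along the arc, removing the two small $N$-balls and gluing in a tube $S^{N-1}\times[0,1]$ between the resulting $(N-1)$-sphere boundaries. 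The result is an immersed cobordism $\widetilde{M}\subset\mathbb{R}^D\setminus S$ with $\partial\widetilde{M}=\omega_2-\omega_1$.

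Finally, sweep $\widetilde{M}$ by the level sets of a Morse function whose extremal level sets are $\omega_1$ and $\omega_2$. This produces a one-parameter family of $(N-1)$-dimensional slices giving a continuous path in $cl(\mathbf{\varpi}^{N-1}_D)$ joining $\omega_1$ to $\omega_2$, with path-continuity in the closure supplied by Lemma~\ref{lemma:path-connected}; the topology changes at Morse critical points are precisely the pinches, creases and handle attachments that Definition~\ref{def:homotopy} admits, and since $\widetilde{M}$ avoids $S$ every slice avoids $S$ as well. The technical core, and hence the hard part, is verifying that the surgery moves and the Morse sweep can be performed uniformly inside $\mathbb{R}^D\setminus S$ so that no intermediate slice ever meets $S$; this is precisely the cobordism-theoretic ingredient flagged in Note~\ref{note:homotopy-vs-chi-homotopy}, and is what distinguishes $\chi$-homotopy from ordinary homotopy in $\mathbb{R}^D\setminus\widetilde{\mathcal{S}}$.
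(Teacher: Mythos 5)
Your overall architecture matches the paper's: Stokes' theorem plus the local normalization (\ref{eq:psi-basic-property}) for the forward implication, and, for the converse, reduction to a vanishing signed intersection count, surgery on cancelling pairs of intersection points, and a Morse-function sweep of the resulting cobordism to produce the path in $cl(\mathbf{\varpi}^{N-1}_D)$. Your reformulation $\int_{\omega_i}\psi_S = I(\Omega_i,S)$ is a clean packaging of what the paper uses implicitly, and working with $\Omega_1\sqcup(-\Omega_2)$ rather than the swept volume $\Delta\Omega$ is immaterial.

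The gap is exactly where you flag it, and it is not a removable technicality --- it is the step the paper's proof is actually about. You join a cancelling pair $\{\mathbf{p}_+,\mathbf{p}_-\}$ by an arc in $\mathbb{R}^D\setminus S$ and assert that a tube $S^{N-1}\times[0,1]$ can be glued in along it. Path-connectedness of the complement produces only the arc; it does not produce a tube whose two ends match $\partial B_\epsilon(\mathbf{p}_+)$ and $\partial B_\epsilon(\mathbf{p}_-)$ with the orientations required for the glued object to be an oriented manifold with unchanged boundary. That matching is possible precisely because the two points carry opposite local signs: the obstruction is the class of the two boundary spheres in $H_{N-1}(\mathbb{R}^D\setminus S)\cong\mathbb{Z}$, i.e.\ the sum of their linking numbers with $S$, which vanishes for an opposite-sign pair and equals $\pm 2$ for a same-sign pair. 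If you do not verify this, your surgery as written would apparently apply to same-sign pairs as well, which would ``prove'' that all candidate manifolds are $\chi$-homotopic. The paper sidesteps the issue by routing the connecting path $\pi$ \emph{on} $S$ (using connectedness of $S$) and tracing a transverse $\epsilon$-ball along it: the function $\mathcal{F}(t)=\int_{\overline{B}_\epsilon(\pi(t))}\d\psi_S$ is continuous, takes values in $\{0,\pm1\}$, and equals $+1$ at $t=0$, hence is identically $+1$, forcing $\overline{B}_\epsilon(\pi(1)) = -B^{\Delta\Omega}_\epsilon(\mathbf{p}_-)$; this is precisely the orientation coherence that makes $\Delta\Omega+\partial\Psi$ an oriented manifold with the same boundary and two fewer intersection points. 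Either adopt that construction or supply the linking-number computation for your complement-arc version; as written, the ``technical core'' you defer is the content of the theorem. (Your parenthetical on $N=1$ is also wrong as stated: a closed orientable hypersurface separates $\mathbb{R}^D$, so its complement is not path connected.)
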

\begin{proof}
 See Appendix \ref{appendix:homotopy-lemma} for detailed proof.
\myqed \end{proof}

Thus we define
\begin{equation} \label{eq:chi-first-def}
 \chi_S(\omega) = \int_{\omega} \psi_S
\end{equation}
which is a $\chi$-homotopy class invariant for our problem. The main contribution of the paper is to find such a $\psi_S$, which is the main focus of the next section.

Since for a given $\omega$ we can find a $\Omega\in\mathbf{\Omega}(\omega)$ that intersects $S$ at distinct points, and since by Stokes theorem $\int_{\omega} \psi_S = \int_{\Omega} \d\psi_S$, it follows from the property of Equation (\ref{eq:psi-basic-property}) that the codomain of $\chi_S(\cdot)$ is $\mathbb{Z}$.


\subsubsection{A $\chi$-homotopy class invariant in presence of a single connected component of singularity manifold} \label{sec:development}

We start with the definitions of $\mathscr{E}$ and $\mathscr{E'}$. Both $\mathscr{E}$ and $\mathscr{E'}$ are copies of the $D$-dimensional Euclidean manifold, and they share the same metric chart. Topologically or algebraically there is no distinction between $\mathscr{E}$ and $\mathscr{E'}$. However we will use the unprimed coordinate, $\mathbf{x} = [x_1,x_2,\cdots,x_D]^T$, to denote a point in $\mathscr{E}$, and a primed coordinate, $\mathbf{x'}  = [x'_1,x'_2,\cdots,x'_D]^T$ to denote a point in $\mathscr{E'}$. Moreover we will assume for the following analysis that a candidate manifold $\omega$, and the manifolds $\Omega\in\mathbf{\Omega}(\omega)$ are embedded or immersed in $\mathscr{E}$. Whereas, the singularity manifold, $S$ is embedded in $\mathscr{E'}$.

Thus, $\Omega$ is a $N$-dimensional manifold immersed in $\mathscr{E}$, and $S$ is a $(D-N)$-dimensional manifold embedded in $\mathscr{E'}$. Thus, the Cartesian product $\mathcal{V} := \Omega \times S$ is a $D$-dimensional manifold immersed in the $2D$-dimensional product space $\mathscr{E}\times\mathscr{E'}$.

We now make the following three important observations:
\begin{itemize}

 \item[(I)] Consider an arbitrary point $\{\mathbf{x}_0,\mathbf{x'}_0\}$ on $\mathcal{V}$. The tangent space $T\mathcal{V}_{\{\mathbf{x}_0,\mathbf{x'}_0\}}$ at the point is definitely a $D$-dimensional vector space (note that tangent space is well-defined for immersion). Thus, of the $2D$ differentials, $\{\d x_1,\d x_2,\cdots,\d x_D,\d x'_1,\d x'_2,\cdots,\d x'_D\} \big|_{\{\mathbf{x}_0,\mathbf{x'}_0\}}$, $D$ linearly independent ones form a basis for the tangent space $T\mathcal{V}_{\{\mathbf{x}_0,\mathbf{x'}_0\}}$.

 \item[(II)] Again, since $\mathcal{V} = \Omega \times S$, the tangent space of $\Omega$ at $\mathbf{x}_0$, \emph{i.e.} $T\Omega_{\mathbf{x}_0}$ is a subspace of $T\mathcal{V}_{\{\mathbf{x}_0,\mathbf{x'}_0\}}$. $\Omega$ itself being a $N$-dimensional manifold, $T\Omega_{\mathbf{x}_0}$ is $N$-dimensional. Thus out of the $D$ differentials, $\{\d x_1,\d x_2,\cdots,\d x_D\} \big|_{\mathbf{x}_0}$, $N$ linearly independent ones form a basis for the tangent space $T\Omega_{\mathbf{x}_0}$.

 \item[(III)] Likewise, since $S$ is a $(D-N)$-dimensional manifold, out of the $D$ differentials, $\{\d x'_1,\d x'_2,\cdots,\d x'_D\} \big|_{\mathbf{x'}_0}$, $D-N$ linearly independent ones form a basis for the tangent space $TS_{\mathbf{x'}_0}$.

\end{itemize}

\begin{lemma} \label{lemma:independence}
Let us consider a differential $p$-form as follows,
\[ \xi = h ~~\d x^{(\tau_1)}_{\sigma(1)} \wedge \d x^{(\tau_2)}_{\sigma(2)} \wedge \cdots \wedge \d x^{(\tau_p)}_{\sigma(p)} \]
where, superscript $(\tau_i)$ indicate whether the $x_i$ has a prime on it (\emph{i.e.} $\tau_i \in \{0,1\}$, and $\tau_i=0$ implies the corresponding differential is $\d x_i$, and $\tau_i=1$ implies the corresponding differential is $\d x'_i$), 
and $\sigma$ be an arbitrary permutation of $\{1,2,\cdots,D\}$.

Then, for $\xi$ not to vanish identically on $\mathcal{V}$ we need to have \[ p - N ~\leq~ \tau_1+\tau_2+\cdots+\tau_p ~\leq~ D-N \]
\end{lemma}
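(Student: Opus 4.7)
The plan is straightforward: separate $\xi$ into an unprimed wedge and a primed wedge using the anticommutativity of the wedge product, and then apply the dimension counts recorded in observations (II) and (III) just before the lemma. Let $k = \tau_1 + \tau_2 + \cdots + \tau_p$ count how many of the differentials appearing in $\xi$ are primed, so that $p - k$ of them are unprimed. Reordering the factors of the wedge product introduces only an overall sign and therefore does not affect whether $\xi$ vanishes identically; after rearranging I may write
\[
\xi \;=\; \pm \, h \; \alpha \wedge \beta,
\]
where $\alpha$ is a wedge product of $p - k$ of the $\d x_i$'s and $\beta$ is a wedge product of $k$ of the $\d x'_j$'s, each with appropriately chosen indices from $\sigma$.

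Next I observe that on $\mathcal{V} = \Omega \times S$ the factor $\alpha$ is the pullback under the projection $\pi_\Omega \colon \mathcal{V} \to \Omega$ of a form on $\Omega$ (viewed as an immersed submanifold of $\mathscr{E}$), and likewise $\beta$ is the pullback under $\pi_S \colon \mathcal{V} \to S$ of a form on $S$; since pullback commutes with the wedge, the decomposition above transports to $\mathcal{V}$. By observation (II), the tangent space $T\Omega_{\mathbf{x}_0}$ is $N$-dimensional, so at any point at most $N$ of the differentials $\d x_1, \ldots, \d x_D$ are linearly independent when restricted to $\Omega$. Consequently any wedge of more than $N$ unprimed differentials is identically zero on $\Omega$, and its pullback to $\mathcal{V}$ is zero. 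For $\alpha$, and hence $\xi$, not to vanish identically we therefore need $p - k \leq N$, i.e.\ $k \geq p - N$. By the completely symmetric argument using observation (III), which says $\dim S = D - N$, the factor $\beta$ vanishes identically whenever $k > D - N$, giving $k \leq D - N$. Combining these two inequalities yields
\[
p - N \;\leq\; \tau_1 + \tau_2 + \cdots + \tau_p \;\leq\; D - N,
\]
as required.

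I do not anticipate any serious obstacle in this argument. The only point worth being careful about is the distinction between $\xi$ viewed as a form on the ambient $2D$-dimensional product $\mathscr{E} \times \mathscr{E}'$, where all $2D$ differentials are linearly independent, and its restriction to the $D$-dimensional immersed submanifold $\mathcal{V}$ through the two projections, which is precisely where the dimensional collapse producing both inequalities takes place. Also worth noting, but not problematic, is that the converse direction (that the two inequalities suffice for $\xi$ to be generically nonvanishing for appropriate choices of indices and coefficient $h$) is not claimed by the lemma and need not be proved.
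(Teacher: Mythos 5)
Your proof is correct and follows essentially the same route as the paper: both arguments rest on observations (II) and (III), namely that at most $N$ unprimed and at most $D-N$ primed differentials can be linearly independent on $\mathcal{V}=\Omega\times S$, so a wedge containing more than $N$ unprimed factors (giving $k\geq p-N$) or more than $D-N$ primed factors (giving $k\leq D-N$) vanishes identically. Your additional framing of the two factors as pullbacks under the projections $\pi_\Omega$ and $\pi_S$ merely makes explicit what the paper's terser counting argument leaves implicit.
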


\begin{proof}
Since from observation (II), at most $N$ \emph{unprimed} differentials can be independent of each other on $\mathcal{V}$, and from observation (III) at most $D-N$ \emph{primed} differentials can be independent of each other on $\mathcal{V}$, it follows directly that $\xi$ vanishes identically if
$ \tau_1+\tau_2+\cdots+\tau_p \notin \left[ p-N, D-N \right] $.
That is, the condition that $\xi$ does not vanish identically is that there are at most $N$ unprimed and at most $D-N$ primed differentials.
\myqed \end{proof}

\begin{note}
In the discussion that follows, we will consider $\Omega = B_\epsilon(\mathbf{r}) =: \Omega_B$ to be a small $N$-dimensional ball centered at an arbitrary $\mathbf{r}\in\mathscr{E}$. Similar to the assumptions in Equation~(\ref{eq:psi-basic-property}), we can make sure that $B_\epsilon(\mathbf{r})$ intersects $S$ at most at a single point. Clearly, $\omega_B := \partial B_\epsilon(\mathbf{r}) \in \mathbf{\varpi}^{N-1}_D$. Also, $\mathcal{V}_B = \Omega_B \times S$. We do this since the requirements of $\psi_S$ for Theorem~\ref{theorem:homotopic} is only local for $B_\epsilon(\mathbf{r})$ instead of the whole $\Omega$.
\end{note}

Consider the function,
\begin{equation}
 \mathcal{G}_k(\mathbf{s}) = \frac{1}{A_{D-1}}~ \frac{s_k}{\left( s_1^2 + s_2^2 + \cdots + s_D^2 \right)^{D/2}} \label{eq:G-k}
\end{equation}
where, $\mathbf{s} = [s_1, s_2, \cdots, s_D]^T \in \mathbb{R}^D$, and $A_{D-1} = \frac{ D \pi^{\frac{D}{2}}}{\Gamma (\frac{D}{2} + 1)}$ is the surface area of the $(D-1)$-sphere. The following is a known identity \cite{William:Clifford},
\begin{equation}
 \delta^D(\mathbf{s}) = \sum_{k=1}^D \frac{\partial \mathcal{G}_k (\mathbf{s})}{\partial s_k} \label{eq:dirac-delta-first}
\end{equation}
where $\delta^D(\cdot)$ is the Dirac Delta function on a $D$-dimensional manifold.
For more discussion on the properties of $\delta^D$ and other possibilities of $\mathcal{G}_k$ please refer to Appendix \ref{appendix:dirac}.

We now define the following differential $(D-1)$-form.
\begin{equation}
 \eta = \sum_{k=1}^D \mathcal{G}_k (\mathbf{s}) ~~(-1)^{k+1} ~~\d s_1 \wedge \d s_2 \wedge \cdots \wedge \d s_{k-1} \wedge \d s_{k+1} \wedge \cdots \wedge \d s_D \label{eq:omega-in-s}
\end{equation}
Clearly, using formula (\ref{eq:dirac-delta-first}), the exterior derivative \cite{DiffGeo:Yves:00} of $\eta$ is,
\begin{equation}
 \d \eta = \delta^D (\mathbf{s}) ~~\d s_1 \wedge \d s_2 \wedge \cdots \wedge \d s_D
\end{equation}

We now make the substitution $\mathbf{s} = \mathbf{x} - \mathbf{x'}$, where, as described before, $\mathbf{x}\in\mathscr{E}$ and $\mathbf{x'}\in\mathscr{E'}$.
The point $\mathbf{s} = 0 \Rightarrow \mathbf{x} = \mathbf{x'}$ on $\mathcal{V}_B$ will hence correspond to a point where $\Omega_B$ intersects $S$ (which, by our assumption of $\Omega_B$, can at most be one in number).
With a little abuse of notation, we write $\Omega_B \cap S \neq \emptyset$ to imply the existence of $\mathbf{x} = \mathbf{x'}$ on $\mathcal{V}_B$.

Thus, we can integrate $\d \eta$ on the $D$-dimensional manifold, $\mathcal{V}_B \subset \mathscr{E}\times\mathscr{E'}$,
\begin{equation} \label{eq:eta-integration}
 \int_{\mathcal{V}_B} \d \eta = \left\{
      \begin{array}{l}
       \pm 1, ~~\textrm{ if } \Omega_B \cap S \neq \emptyset \\
       0, ~~\textrm{ otherwise.}
      \end{array}
     \right.
\end{equation}
Where, the sign ``$\pm$'' of course depends on the orientation of $\mathcal{V}_B$.

Expanding $\eta$ in (\ref{eq:omega-in-s}) after substituting $\mathbf{s} = \mathbf{x}-\mathbf{x'}$, and noting that terms having more than $N$ unprimed differentials or more that $D-N$ primed differential vanishes, and performing some simplifications, we have the following,
{\small \begin{equation}\begin{array}{l}
 \displaystyle \eta = \sum_{k=1}^D \mathcal{G}_k (\mathbf{s}) ~~(-1)^{k+1} ~~\d s_1 \wedge \d s_2 \wedge \cdots \wedge \d s_{k-1} \wedge \d s_{k+1} \wedge \cdots \wedge \d s_D \\
  \displaystyle = \sum_{k=1}^D \mathcal{G}_k (\mathbf{x}-\mathbf{x'}) ~~(-1)^{k+1} ~~\d (x_1-x'_1) \wedge \d (x_2-x'_2) \wedge \cdots  \\
      \displaystyle \qquad \qquad \qquad \qquad \qquad \wedge \d (x_{k-1}-x'_{k-1}) \wedge \d (x_{k+1}-x'_{k+1}) \wedge \cdots \wedge \d (x_D-x'_D)  \\
  \displaystyle = \sum_{k=1}^D \Bigg( \mathcal{G}_k (\mathbf{x}-\mathbf{x'}) ~~(-1)^{k+1} \cdot \\
      \displaystyle \qquad \bigg( (-1)^{D-N-1} \!\!\!\!\!\! \sum_{\mysubstack{\tau_i \in \{0,1\}}{\tau_1+\cdots+\tau_D=D-N-1}} \!\!\!\!\!\!\!\! \d x^{(\tau_1)}_1 \wedge \d x^{(\tau_2)}_2 \wedge \cdots \wedge x^{(\tau_{k-1})}_{k-1} \wedge x^{(\tau_{k+1})}_{k+1} \wedge \cdots \wedge \d x^{(\tau_D)}_D  \\
      \displaystyle \qquad +~~~~ (-1)^{D-N} \!\!\!\!\!\! \sum_{\mysubstack{\tau_i \in \{0,1\}}{\tau_1+\cdots+\tau_D=D-N}} \!\!\!\!\!\!\!\! \d x^{(\tau_1)}_1 \wedge \d x^{(\tau_2)}_2 \wedge \cdots \wedge x^{(\tau_{k-1})}_{k-1} \wedge x^{(\tau_{k+1})}_{k+1} \wedge \cdots \wedge \d x^{(\tau_D)}_D \bigg) \Bigg)  \\
  \displaystyle = \sum_{k=1}^D \Bigg( \mathcal{G}_k (\mathbf{x}-\mathbf{x'}) ~~(-1)^{k+1} \cdot  \\
      \displaystyle \qquad \bigg( \frac{(-1)^{D-N-1}}{N ! (D-N-1) !} \!\!\!\! \sum_{~~\sigma \in\textrm{perm}(\mathcal{N}^D_{-k})} \!\!\!\!\!\! \textrm{sgn}(\sigma) ~~\d x_{\sigma(1)} \wedge \cdots \wedge x_{\sigma(N)} \wedge x'_{\sigma(N+1)} \wedge \cdots \wedge \d x'_{\sigma(D-1)}  \\
      \displaystyle \qquad +~~~~ \frac{(-1)^{D-N}}{(N-1) ! (D-N) !} \!\!\!\!\! \sum_{~~\varsigma \in \textrm{perm}(\mathcal{N}^D_{-k})} \!\!\!\!\!\! \textrm{sgn}(\varsigma) ~~\d x_{\varsigma(1)} \wedge \cdots \wedge x_{\varsigma(N-1)} \wedge x'_{\varsigma(N)} \wedge \cdots \wedge \d x'_{\varsigma(D-1)} \bigg) \Bigg)  \\ \label{eq:eta-expanded}
\end{array}\end{equation} }
where, $\mathcal{N}^D_{-k} = \{1,2,\cdots,k-1,k+1,\cdots,D \}$, $\textrm{perm}(\mathcal{N})$ indicates all permutations of the elements of set $\mathcal{N}$, and $\textrm{sgn}(\sigma)$ is $1$ if $\sigma$ an even permutation, and $-1$ if it's odd.
Since $\eta$ is a differential $(D-1)$-form, by Lemma~\ref{lemma:independence}, after expansion of all the $s$ into $x$ and $x'$, only those terms survive for which there are either $N$ unprimed and $D-N-1$ primed differentials, or $N-1$ unprimed and $D-N$ primed differentials. That essentially are the contents of the two summations within the large brackets in (\ref{eq:eta-expanded}).

However, as we will see later, our interest really lies in the value of $\d\eta$. If we compute the exterior derivative of $\eta$ with respect to $\mathbf{x}$ or $\mathbf{x'}$, one of the inner summation terms in (\ref{eq:eta-expanded}) will vanish identically. (Here we emphasize that the choice of the variable used for computing exterior derivative does not effect the final result though, \emph{i.e.} exterior calculus is coordinate independent). We observe that if we compute the exterior derivative with respect to $\mathbf{x}$, the first set of summation inside the large brackets will end up having $N+1$ unprimed differentials, hence will vanish identically. On the other hand, if we compute the exterior derivative with respect to $\mathbf{x'}$, the second set of summation inside the large brackets will end up having $D-N+1$ primed differentials, hence will vanish identically.

This implies, if we decide from beforehand which variable to use for computing the exterior derivative, we can conveniently drop the term in $\eta$ that vanishes upon taking exterior derivative w.r.t. that variable. Eventually we choose $\mathbf{x}$ as the variable. To emphasize the fact that we compute the exterior derivative with respect to $\mathbf{x}$, we use the notation $\d_\mathbf{x}$ for exterior derivative.

Thus, dropping the terms from $\eta$ that will vanish identically upon taking exterior derivative w.r.t. $\mathbf{x}$, we have,
{\small \begin{eqnarray}
 \overline{\eta} & = & \sum_{k=1}^D \Bigg( \mathcal{G}_k (\mathbf{x}-\mathbf{x'}) ~~(-1)^{k+1+D-N} \cdot \nonumber \\
     & & \qquad \sum_{\mysubstack{\tau_i \in \{0,1\}}{\tau_1+\cdots+\tau_D=D-N}} \!\!\! \d x^{(\tau_1)}_1 \wedge \d x^{(\tau_2)}_2 \wedge \cdots \wedge x^{(\tau_{k-1})}_{k-1} \wedge x^{(\tau_{k+1})}_{k+1} \wedge \cdots \wedge \d x^{(\tau_D)}_D \Bigg) \nonumber \\
 & = & \sum_{k=1}^D \Bigg( \mathcal{G}_k (\mathbf{x}-\mathbf{x'}) ~~\frac{(-1)^{k+1+D-N}}{(N-1) ! (D-N) !} \cdot \nonumber \\
     & & \qquad \sum_{~~\varsigma \in \textrm{perm}(\mathcal{N}^D_{-k})} \!\!\! \textrm{sgn}(\varsigma) ~~\d x_{\varsigma(1)} \wedge \cdots \wedge x_{\varsigma(N-1)} \wedge x'_{\varsigma(N)} \wedge \cdots \wedge \d x'_{\varsigma(D-1)} \Bigg) \nonumber \\ 
 & = & (-1)^{D-N} \sum_{k=1}^D \Bigg( \mathcal{G}_k (\mathbf{x}-\mathbf{x'}) ~~(-1)^{k+1} \cdot \nonumber \\
    & & \qquad \sum_{~~\rho \in {part}^{D-N} (\mathcal{N}^D_{-k})} \!\!\! \textrm{sgn}(\rho) ~~\d x'_{\rho_l(1)} \wedge \cdots \wedge x'_{\rho_l(D-N)} \wedge x_{\rho_r(1)} \wedge \cdots \wedge \d x_{\rho_r(N-1)} \Bigg) \nonumber \\ \label{eq:def-eta-bar-partition}
\end{eqnarray}}
where, in the last expression, for computational convenience, we introduce the notation ${part}^w(A)$, which is described in the \emph{Notations Glossary} section at the beginning of the paper. It is easy to verify the last equality.

Clearly, $\d \eta = \d_\mathbf{x} \overline{\eta}$.
Thus from (\ref{eq:eta-integration}) we have,
\begin{eqnarray} 
 \int_{\mathcal{V}_B} \d_{\mathbf{x}} \overline{\eta} & = & \left\{
      \begin{array}{l}
       \pm 1, ~~\textrm{ if } \Omega_B \cap S \neq \emptyset \\
       0, ~~\textrm{ otherwise.}
      \end{array}
     \right. \nonumber \\
  & = & \int_{\partial\mathcal{V}_B} \overline{\eta} ~~~~~~~~\textrm{(Using Stoke's Theorem)} \label{eq:int-eta-bar}
\end{eqnarray}

We now note that $\mathcal{V}_B = \Omega_B \times S$. Again, $S$, by definition, has no boundary. Therefore, $\partial\mathcal{V} = \partial\Omega_B \times S = \omega_B \times S$. On the other hand, $S$ is completely defined by the coordinates $\mathbf{x'}$ (\emph{i.e.} $S$ does not depend on the unprimed coordinates). Thus we can \emph{break up} the integral $\int_{\partial\mathcal{V}_B} \overline{\eta}$ into two components and rearrange (\ref{eq:def-eta-bar-partition}) as follows,
\begin{eqnarray}
 & & \int_{\partial\mathcal{V}_B} \overline{\eta} \nonumber \\
 & = & \int_{\partial\Omega_B} \int_{S} \overline{\eta} \nonumber \\
 & = & \int_{\partial\Omega_B}~~ \sum_{k=1}^D \!\!\!\!\!\!\sum_{~~~~~~\rho \in {part}^{D-N} (\mathcal{N}^D_{-k})} \nonumber \\
       & & \qquad \left( (-1)^{D-N+k+1}  \int_{S} \mathcal{G}_k (\mathbf{x}-\mathbf{x'}) ~~\textrm{sgn}(\rho) ~~\d x'_{\rho_l(1)} \wedge \cdots \wedge x'_{\rho_l(D-N)} \right) \nonumber \\
       & & \qquad\qquad\qquad\qquad\qquad\qquad\qquad\qquad\qquad\qquad \wedge x_{\rho_r(1)} \wedge \cdots \wedge \d x_{\rho_r(N-1)}  \nonumber \\
 & = & \int_{\partial\Omega_B}~~ \sum_{k=1}^D \!\!\!\!\!\!\sum_{~~~~~~\rho \in {part}^{D-N} (\mathcal{N}^D_{-k})} \!\!\!\!\!\! U^k_{\rho} (\mathbf{x};S) \wedge x_{\rho_r(1)} \wedge \cdots \wedge \d x_{\rho_r(N-1)}  \label{eq:int-eta-final}
\end{eqnarray}
where,
\begin{equation}
  U^k_{\rho} (\mathbf{x};S) = (-1)^{D-N+k+1} ~~\textrm{sgn}(\rho) \int_{S} \mathcal{G}_k (\mathbf{x}-\mathbf{x'}) ~~\d x'_{\rho_l(1)} \wedge \cdots \wedge x'_{\rho_l(D-N)} \label{eq:U-final}
\end{equation}

Thus, if we define,
\begin{equation}
 \psi_S = \sum_{k=1}^D \!\!\!\!\!\!\sum_{~~~~~~\rho \in {part}^{D-N} (\mathcal{N}^D_{-k})} \!\!\!\!\!\! U^k_{\rho} (\mathbf{x};S) \wedge x_{\rho_r(1)} \wedge \cdots \wedge \d x_{\rho_r(N-1)}  \label{eq:omg-final}
\end{equation}
from (\ref{eq:int-eta-bar}). (\ref{eq:int-eta-final}) and (\ref{eq:omg-final}), and using Stoke's Theorem, we have,
\begin{eqnarray}
 & & \int_{\partial\Omega_B} \psi_S = \left\{ \begin{array}{l}
       \pm 1, ~~\textrm{ if } \Omega_B \cap S \neq \emptyset \\
       0, ~~\textrm{ otherwise.}
      \end{array}
     \right. \nonumber \\
 & \Rightarrow \quad & \int_{\Omega_B} \d\psi_S = \left\{ \begin{array}{l}
       \pm 1, ~~\textrm{ if } \Omega_B \cap S \neq \emptyset \\
       0, ~~\textrm{ otherwise.}
      \end{array}
     \right.
\end{eqnarray}

By construction $\Omega_B$ can intersect $S$ at most at one point.
Thus, $\psi_S$, as defined in (\ref{eq:omg-final}) satisfies the condition for Theorem~\ref{theorem:homotopic}.
Thus by Theorem~\ref{theorem:homotopic} the following is a $\chi$-homotopy class invariant for candidate manifolds $\omega\in\mathbf{\varpi}^{N-1}_D$ in presence of the singularity manifold $S$,
\begin{equation}
 \chi_S(\omega) =  \int_{\omega} \psi_S \label{eq:homotopy-invarient}
\end{equation}
where, $\psi_S$ is given by Equation (\ref{eq:omg-final}).
Note that $\psi_S$ is only a function of $S$ and does not depend on $\omega$ or $\Omega$.
We reiterate that the subscript $S$ in $\chi_S(\omega)$ is used in order to emphasize that the computation of $\psi$ was done for a give connected component of the singularity manifold, $S$.



\subsubsection{$\chi$-homotopy class invariant in presence of $\widetilde{\mathcal{S}}$} \label{sec:multiple-sing-gen}

Now we need to account for multiple connected components of $\widetilde{\mathcal{S}} = S_1 \cup S_2 \cup \cdots \cup S_m$.

\begin{lemma} \label{lemma:multiple-sing-homotopy}
 Two candidate manifolds $\omega_1$ and $\omega_2$ are $\chi$-homotopic in presence of connected components of singularity manifolds, $S_1,S_2,\cdots,S_m$, if and only if $\omega_1$ and $\omega_2$ are $\chi$-homotopic with respect to each and every $S_i$.
\end{lemma}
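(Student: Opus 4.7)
The plan is to prove the two directions of the biconditional separately. The forward direction will be immediate from Definition~\ref{def:homotopy}: if $\phi:[0,1]\to cl(\mathbf{\varpi}^{N-1}_D)$ is a continuous path witnessing $\chi$-homotopy of $\omega_1$ and $\omega_2$ with respect to the full set $\widetilde{\mathcal{S}}$, then since $S_i\subseteq\widetilde{\mathcal{S}}$ we automatically have $\phi(\alpha)\cap S_i=\emptyset$ for every $\alpha$ and every $i$, so the same $\phi$ witnesses $\chi$-homotopy with respect to each individual $S_i$.

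For the converse I would first apply Theorem~\ref{theorem:homotopic} separately to each $S_i$ to conclude $\chi_{S_i}(\omega_1)=\chi_{S_i}(\omega_2)$ for every $i=1,\ldots,m$. The remaining task is to build a single cobordism, namely an immersed $N$-manifold $\widetilde{\Omega}$ with $\partial\widetilde{\Omega}=\omega_1\sqcup(-\omega_2)$, that simultaneously avoids all of $\widetilde{\mathcal{S}}$; the required continuous path $\phi$ in $cl(\mathbf{\varpi}^{N-1}_D)$ will then follow by slicing $\widetilde{\Omega}$ transverse to a suitable flow from $\omega_1$ to $\omega_2$. As the starting cobordism I would take the disjoint-union immersion $\Omega^{*}:=\Omega_1\sqcup(-\Omega_2)\to\mathbb{R}^D$, with $\Omega_1\in\mathbf{\Omega}(\omega_1)$ and $\Omega_2\in\mathbf{\Omega}(\omega_2)$ chosen by Lemma~\ref{lemma:intersection} to intersect $\widetilde{\mathcal{S}}$ transversely at finitely many distinct points. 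Stokes' theorem together with the defining property~(\ref{eq:psi-basic-property}) of $\d\psi_{S_i}$ gives the signed intersection numbers $\#(\Omega^{*}\cap S_i)=\int_{\Omega^{*}}\d\psi_{S_i}=\chi_{S_i}(\omega_1)-\chi_{S_i}(\omega_2)=0$ for each $i$.

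Since the finitely many transverse intersections of $\Omega^{*}$ with any fixed $S_i$ carry signs summing to zero, I would pair opposite-sign intersections and remove each pair by a local surgery on $\Omega^{*}$: connect the two points by an arc $\gamma$ in $\mathbb{R}^D\setminus\widetilde{\mathcal{S}}$ (first attaching a $1$-handle to $\Omega^{*}$ along $\gamma$ if the two points lie in different components of $\Omega^{*}$, which is permissible because the $\chi$-homotopy relation allows the surgical removal of pinch points and creases per Definition~\ref{def:homotopy} and Note~\ref{note:homotopy-vs-chi-homotopy}), and then push a thin tubular neighbourhood of $\gamma$ off $S_i$. Because $S_1,\ldots,S_m$ are pairwise disjoint, $\gamma$ can be routed to avoid every $S_j$ with $j\neq i$, so the surgery does not create new intersections with any other component. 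Iterating this procedure over $i$ produces the desired $\widetilde{\Omega}$, and hence the homotopy $\phi$.

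The main obstacle will be the simultaneous cancellation step, and specifically the case where a pair of opposite-sign intersections of $\Omega^{*}$ with $S_i$ lies in different connected components of $\Omega^{*}$. This case forces a $1$-handle surgery that changes the topology of $\Omega^{*}$, and justifying that the surgered object is still a legitimate cobordism from $\omega_1$ to $-\omega_2$ is precisely what the extended, cobordism-flavoured definition of $\chi$-homotopy (as opposed to standard homotopy) is designed to support; this also closely parallels the cancellation argument that should underlie the proof of Theorem~\ref{theorem:homotopic} itself.
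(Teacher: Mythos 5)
Your proposal is correct in substance, but it takes a genuinely different and considerably heavier route than the paper. The paper's own proof is a short point-set argument on the contrapositive of the statement: if $\omega_1,\omega_2$ fail to be $\chi$-homotopic with respect to some single $S_j$, then no path in $cl(\mathbf{\varpi}^{N-1}_D)$ avoids $S_j$, hence none avoids $S_1\cup\cdots\cup S_m$; and conversely any path meeting the union must meet some component. It never invokes $\psi_S$, Theorem~\ref{theorem:homotopic}, or surgery. You instead route through the invariant: Theorem~\ref{theorem:homotopic} applied componentwise gives $\chi_{S_i}(\omega_1)=\chi_{S_i}(\omega_2)$ for every $i$, and you then rerun the surgery argument of Appendix~\ref{appendix:homotopy-lemma} on $\Omega_1\sqcup(-\Omega_2)$, cancelling opposite-sign intersections with each $S_i$ in turn. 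What this buys is a fully explicit treatment of the genuinely nontrivial direction: it produces one deformation avoiding \emph{all} components simultaneously from the $m$ separate hypotheses, whereas the paper's ``only if'' clause (every path meets \emph{some} $S_j$, therefore some single $S_j$ obstructs \emph{every} path) is asserted rather than argued. The cost is that your proof of the lemma now depends on Theorem~\ref{theorem:homotopic} and on the existence of $\psi_S$, which the paper's proof does not. Two small repairs to your construction: the cancelling arc $\gamma$ for a pair of intersections with $S_i$ should be taken \emph{on} $S_i$ itself (its endpoints lie on $S_i$, and it is precisely the connectedness of $S_i$ that supplies such an arc), exactly as in the proof of Theorem~\ref{theorem:homotopic}, rather than in $\mathbb{R}^D\setminus\widetilde{\mathcal{S}}$; the disjointness of the components and compactness then let you shrink the tubular neighbourhood of $\gamma$ until it misses every $S_j$ with $j\neq i$, which is the routing claim your iteration needs.
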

\begin{proof}
 We can argue by contradiction. The precise equivalent statement of the above lemma stated in terms of the complimentary conditions is, \emph{``Two candidate manifolds $\omega_1$ and $\omega_2$ are not $\chi$-homotopic in presence of connected components of singularity manifolds, $S_1,S_2,\cdots,S_m$, if and only if $\omega_1$ and $\omega_2$ are not $\chi$-homotopic with respect to at least one $S_j, ~~j\in\{1,2,\cdots,m\}$.''} (Since $A \Leftrightarrow B ~\equiv~ ! A \Leftrightarrow~\! ! B$).
 
 If $\omega_1$ and $\omega_2$ are not $\chi$-homotopic with respect to a particular $S_j$, then there does not exist a path $\phi:[0,1]\to cl(\mathbf{\varpi}^n_d)$ connecting $\omega_1$ and $\omega_2$ such that $\phi(\alpha)\cap S_j = \emptyset, ~\forall \alpha\in[0,1]$ (\emph{i.e.}, one cannot be continuously deformed into the other without intersecting $S_j$).
 It follows, in presence of other $S_i, ~~i\neq j$, there still does not exist a path $\phi:[0,1]\to cl(\mathbf{\varpi}^n_d)$ connecting $\omega_1$ and $\omega_2$ such that $\phi(\alpha)\cap (S_j \cup \bigcup_{i\neq j} S_i) = \emptyset, ~\forall \alpha\in[0,1]$.
 Thus we prove the ``if'' part of the equivalent statement in terms of the complimentary conditions.
 Next, we assume $\omega_1$ and $\omega_2$ are not $\chi$-homotopic in presence of connected components of singularity manifolds, $S_1,S_2,\cdots,S_m$. Thus it follows that any path $\phi:[0,1]\to cl(\mathbf{\varpi}^n_d)$ connecting $\omega_1$ and $\omega_2$ will have at least one point, $\phi(\alpha),~\alpha\in[0,1]$, such that it will intersect at least one $S_j, ~~j\in\{1,2,\cdots,m\}$, thus proving the ``only if'' part of the statement.
\myqed \end{proof}

As a consequence of Lemma~\ref{lemma:multiple-sing-homotopy}, we can simply compute $\chi_{S_i}(\omega)$ for every connected component $S_i$ of $\widetilde{\mathcal{S}}$, and stack them up in a $m$-vector to formulate a $\chi$-homotopy invariant.
Thus we define the
the $\chi$-homotopy invariant for $\omega\in\mathbf{\varpi}^{N-1}_D$ in presence of $\widetilde{\mathcal{S}}$ as the $m$-vector
\begin{equation}
 \chi_{\widetilde{\mathcal{S}}}(\omega) =  \left[ \begin{array}{c}
                         \chi_{S_1}(\omega) \\ \chi_{S_2}(\omega) \\ \vdots \\ \chi_{S_m}(\omega)
                        \end{array} \right]
 \label{eq:homotopy-invarient-vector}
\end{equation}
similarly we can define,
\begin{equation}
 \psi_{\widetilde{\mathcal{S}}} =  \left[ \begin{array}{c}
                         \psi_{S_1} \\ \psi_{S_2} \\ \vdots \\ \psi_{S_m}
                        \end{array} \right]
 \label{eq:psi-vector}
\end{equation}

Since, $\chi_{S_i}(\omega)\in\mathbb{Z},~~\forall\omega\in\mathbf{\varpi}^{N-1}_D$, it is easy to note that the codomain of $\chi_{\widetilde{\mathcal{S}}}(\cdot)$ is $\mathbb{Z}^m$.



\subsection{Special Cases} \label{sec:theo-validation}

In this section we illustrate the forms that equations (\ref{eq:U-final}) and (\ref{eq:omg-final}) take under certain special cases. We compare those with the well-known formulae from complex analysis, electromagnetism and electrostatics that are known to imply invariants of such equivalent classes as discussed in Section~\ref{sec:motivation}.
We once again demonstrate all the computations using a single connected component of $\widetilde{\mathcal{S}}$.



\subsubsection{$D=2,N=2$ :}

This particular case has parallels with the \emph{Cauchy integral theorem} and the \emph{Residue theorem} from \emph{Complex analysis}.
Here a singularity, $S$ forms $D-N=0$-dimensional manifold, \emph{i.e.} a point, the coordinate of which we represent by $\mathbf{S}=[s_1,s_2]^T$ (Figure~\ref{fig:abstract:a}).

\noindent Thus, the partitions in (\ref{eq:omg-final}) for the different values of $k$ are as follows,\\
For $k=1$, ~~${part}^0(\{2\}) = \Big\{ \{\{\},\{2\}\} \Big\} $, \\
For $k=2$, ~~${part}^0(\{1\}) = \Big\{ \{\{\},\{1\}\} \Big\} $

\noindent Thus,
\[
 U^1_1(\mathbf{x}) = \frac{1}{2\pi}~ (-1)^{2-2+1+1} (1) \frac{x_1-S_1}{|\mathbf{x}-\mathbf{S}|^2} = \frac{1}{2\pi}~ \frac{x_1-s_1}{|\mathbf{x}-\mathbf{S}|^2}
\]
\[
 U^2_1(\mathbf{x}) = \frac{1}{2\pi}~ (-1)^{2-2+2+1} (1) \frac{x_2-S_2}{|\mathbf{x}-\mathbf{S}|^2} = - \frac{1}{2\pi}~ \frac{x_2-s_2}{|\mathbf{x}-\mathbf{S}|^2}
\]
where the subscripts of $U$ indicate the index of the partition used (in the lists above). Also, note that integration of a $0$-form on a $0$-dimensional manifold is equivalent to evaluation of the $0$-form at the point.

\noindent Thus,
\begin{eqnarray}
 \psi_{\mathbf{S}} & = & U^1_1(\mathbf{x}) \d x_2 + U^2_1(\mathbf{x}) \d x_1 \nonumber \\
  & = & \frac{1}{2\pi} ~\frac{(x_1-s_1) \d x_2 - (x_2-s_2) \d x_1}{|\mathbf{x}-\mathbf{S}|^2} \nonumber \\
  & = & \frac{1}{2\pi} ~~Im \left( \frac{1}{z-\mathbf{S}_c} \d z \right) \nonumber
\end{eqnarray}
where in the last expression we used the complex variables, $z = x_1 + i x_2$ and $\mathbf{S}_c = s_1 + i s_2$. In fact, from complex analysis (Residue theorem and Cauchy integral theorem) we know that $\int_{\gamma} \frac{1}{z-\mathbf{S}_c} \d z $ (where $\gamma$ is a closed curve in $\mathbb{C}$) is $2\pi i$ if $\gamma$ encloses $\mathbf{S}_c$, but zero otherwise. This is just the fact that
\[
 \int_{\gamma} \psi_{\mathbf{S}} = \int_{\textrm{Ins}(\gamma)} \d \psi_{\mathbf{S}} = \left\{ \begin{array}{l}
                                                                 \pm 1, \textrm{ if $\textrm{Ins}(\gamma)$ contains $S$}\\
                                                                 0, \textrm{ otherwise}
                                                                \end{array} \right.
\]
where $\textrm{Ins}(\gamma)$ represents the inside region of the curve $\gamma$, \emph{i.e.} the area enclosed by it.


\subsubsection{$D=3,N=2$ :}

This particular case has parallels with the \emph{Ampere's Law} and the \emph{Biot-Savart Law} from \emph{Electromagnetism}.
Here a singularity, $S$, forms $D-N=1$-dimensional manifold, which, in light of Electromagnetism is a current-carrying line/wire (Figure~\ref{fig:abstract:b}).

\noindent The partitions in (\ref{eq:omg-final}) for the different values of $k$ are as follows,\\
For $k=1$, ~~${part}^1(\{2,3\}) = \Big\{ \{\{2\},\{3\}\} ~,~ \{\{3\},\{2\}\} \Big\} $, \\
For $k=2$, ~~${part}^1(\{1,3\}) = \Big\{ \{\{1\},\{3\}\} ~,~ \{\{3\},\{1\}\} \Big\} $, \\
For $k=3$, ~~${part}^1(\{1,2\}) = \Big\{ \{\{1\},\{2\}\} ~,~ \{\{2\},\{1\}\} \Big\} $, \\

\noindent Thus,
{\small \[
 U^1_1(\mathbf{x}) = \frac{1}{4\pi}~ (-1)^{3-2+1+1} (1) \int_S \frac{x_1-x'_1}{|\mathbf{x}-\mathbf{x'}|^3} \d x'_2 = - \frac{1}{4\pi}~ \int_S \frac{x_1-x'_1}{|\mathbf{x}-\mathbf{x'}|^3} \d x'_2
\]
\[
 U^1_2(\mathbf{x}) = \frac{1}{4\pi}~ (-1)^{3-2+1+1} (-1) \int_S \frac{x_1-x'_1}{|\mathbf{x}-\mathbf{x'}|^3} \d x'_3 = \frac{1}{4\pi}~ \int_S \frac{x_1-x'_1}{|\mathbf{x}-\mathbf{x'}|^3} \d x'_3
\]

\[
 U^2_1(\mathbf{x}) = \frac{1}{4\pi}~ (-1)^{3-2+2+1} (1) \int_S \frac{x_2-x'_2}{|\mathbf{x}-\mathbf{x'}|^3} \d x'_1 = \frac{1}{4\pi}~ \int_S \frac{x_2-x'_2}{|\mathbf{x}-\mathbf{x'}|^3} \d x'_1
\]
\[
 U^2_2(\mathbf{x}) = \frac{1}{4\pi}~ (-1)^{3-2+2+1} (-1) \int_S \frac{x_2-x'_2}{|\mathbf{x}-\mathbf{x'}|^3} \d x'_3 = - \frac{1}{4\pi}~ \int_S \frac{x_2-x'_2}{|\mathbf{x}-\mathbf{x'}|^3} \d x'_3
\]

\[
 U^3_1(\mathbf{x}) = \frac{1}{4\pi}~ (-1)^{3-2+3+1} (1) \int_S \frac{x_3-x'_3}{|\mathbf{x}-\mathbf{x'}|^3} \d x'_1 = - \frac{1}{4\pi}~ \int_S \frac{x_3-x'_3}{|\mathbf{x}-\mathbf{x'}|^3} \d x'_1
\]
\[
 U^3_2(\mathbf{x}) = \frac{1}{4\pi}~ (-1)^{3-2+3+1} (-1) \int_S \frac{x_3-x'_3}{|\mathbf{x}-\mathbf{x'}|^3} \d x'_2 = \frac{1}{4\pi}~ \int_S \frac{x_3-x'_3}{|\mathbf{x}-\mathbf{x'}|^3} \d x'_2
\] }
where, as before, the subscripts of $U$ indicate the index of the partition used (in the lists above).\\
Thus,
\begin{eqnarray}
 \psi_S & = & U^1_1(\mathbf{x}) \d x_3 + U^1_2(\mathbf{x}) \d x_2 
              + U^2_1(\mathbf{x}) \d x_3 + U^2_2(\mathbf{x}) \d x_1 
              + U^3_1(\mathbf{x}) \d x_2 + U^3_2(\mathbf{x}) \d x_1  \nonumber \\
        & = & ( U^2_2(\mathbf{x}) + U^3_2(\mathbf{x}) ) \d x_1
              + ( U^1_2(\mathbf{x}) + U^3_1(\mathbf{x}) ) \d x_2
              + ( U^1_1(\mathbf{x}) + U^2_1(\mathbf{x}) ) \d x_3  \nonumber \\
        & = & \left[ \begin{array}{c}
                            U^2_2(\mathbf{x}) + U^3_2(\mathbf{x}) \\
                            U^1_2(\mathbf{x}) + U^3_1(\mathbf{x}) \\
                            U^1_1(\mathbf{x}) + U^2_1(\mathbf{x})
                      \end{array} \right] \cdot\wedge 
                         \left[ \begin{array}{c}\d x_1 \\ \d x_2 \\ \d x_3 \end{array} \right] \nonumber \\
        & = &  \frac{1}{4 \pi} \int_S \left[ \begin{array}{c}
                               -\frac{x_2-x'_2}{|\mathbf{x}-\mathbf{x'}|^3} \d x'_3 + \frac{x_3-x'_3}{|\mathbf{x}-\mathbf{x'}|^3} \d x'_2 \\
                               \frac{x_1-x'_1}{|\mathbf{x}-\mathbf{x'}|^3} \d x'_3 - \frac{x_3-x'_3}{|\mathbf{x}-\mathbf{x'}|^3} \d x'_1 \\
                               -\frac{x_1-x'_1}{|\mathbf{x}-\mathbf{x'}|^3} \d x'_2 + \frac{x_2-x'_2}{|\mathbf{x}-\mathbf{x'}|^3} \d x'_1
                                                   \end{array} \right] \cdot\wedge 
                         \left[ \begin{array}{c}\d x_1 \\ \d x_2 \\ \d x_3 \end{array} \right] \nonumber \\
        & = &  \frac{1}{4 \pi} \int_S \frac{\d \mathbf{l'} \times (\mathbf{x}-\mathbf{x'})}{|\mathbf{x}-\mathbf{x'}|^3} \cdot\wedge 
                         \left[ \begin{array}{c}\d x_1 \\ \d x_2 \\ \d x_3 \end{array} \right] \nonumber
\end{eqnarray}
where, bold face indicates column $3$-vectors and the cross product ``$\times$''$:\mathbb{R}^3\times\mathbb{R}^3\to\mathbb{R}^3$ is the elementary cross product operation of column $3$-vectors. The operation ``$\cdot\wedge$'' between column vectors implies element-wise wedge product followed by summation. Also, $\d \mathbf{l'} = [\d x'_1 ~ \d x'_2 ~ \d x'_3 ]^T$.
It is not difficult to identify the integral in the last expression, $ \mathbf{B} = \frac{1}{4 \pi} \int_S \frac{\d \mathbf{l'} \times (\mathbf{x}-\mathbf{x'})}{|\mathbf{x}-\mathbf{x'}|^3} $ with the \emph{Magnetic Field vector} created by unit current flowing through $S$, computed using the \emph{Biot–Savart law}.
Thus, if $\gamma$ is a closed loop, the statement of the \emph{Ampère's circuital law} gives, $\int_{\gamma} \mathbf{B}\cdot\d \mathbf{l} = \int_{\gamma} \psi_S = I_{encl}~$, the \emph{current enclodes by the loop}.


\subsubsection{$D=3,N=3$ :}

This particular case has parallels with the \emph{Gauss's law} in \emph{Electrostatics}, and in general the \emph{Gauss Divergence theorem}.
Here a singularity, $S$, is a $D-N=0$-dimensional manifold, \emph{i.e.} a point, the coordinate of which is represented by $\mathbf{S} = [S_1,S_2,S_3]^T$, which in the light of \emph{Electrostatics}, is a point charge. The candidate manifolds are $2$-dimensional surfaces (Figure~\ref{fig:abstract:c}).

\noindent The partitions in (\ref{eq:omg-final}) for the different values of $k$ are as follows,\\
For $k=1$, ~~${part}^0(\{2,3\}) = \Big\{ \{\{\},\{2,3\}\} \Big\} $, \\
For $k=2$, ~~${part}^0(\{1,3\}) = \Big\{ \{\{\},\{1,3\}\} \Big\} $, \\
For $k=3$, ~~${part}^0(\{1,2\}) = \Big\{ \{\{\},\{1,2\}\} \Big\} $, \\

\noindent Here, $D-N=0$ implies the integration of (\ref{eq:U-final}) once again becomes evaluation of $0$-forms at $\mathbf{S}$. Thus,
{\small \[
 U^1_1(\mathbf{x}) = \frac{1}{4\pi}~ (-1)^{3-3+1+1} (1) \frac{x_1-S_1}{|\mathbf{x}-\mathbf{S}|^3} = \frac{1}{4\pi}~ \frac{x_1-S_1}{|\mathbf{x}-\mathbf{S}|^3}
\]
\[
 U^2_1(\mathbf{x}) = \frac{1}{4\pi}~ (-1)^{3-3+2+1} (1) \frac{x_2-S_2}{|\mathbf{x}-\mathbf{S}|^3} = - \frac{1}{4\pi}~ \frac{x_2-S_2}{|\mathbf{x}-\mathbf{S}|^3}
\]
\[
 U^3_1(\mathbf{x}) = \frac{1}{4\pi}~ (-1)^{3-3+3+1} (1) \frac{x_3-S_3}{|\mathbf{x}-\mathbf{S}|^3} = \frac{1}{4\pi}~ \frac{x_3-S_3}{|\mathbf{x}-\mathbf{S}|^3}
\] }

\noindent Thus,
\begin{eqnarray}
 \psi_S & = & U^1_1(\mathbf{x}) ~\d x_2 \wedge \d x_3  ~~+~~  U^2_1(\mathbf{x}) ~\d x_1 \wedge \d x_3  ~~+~~  U^3_1(\mathbf{x}) ~\d x_1 \wedge \d x_2 \nonumber \\
    & = & \frac{1}{4\pi} \left( \frac{x_1-S_1}{|\mathbf{x}-\mathbf{S}|^3} ~\d x_2 \wedge \d x_3  ~~+~~ 
                                \frac{x_2-S_2}{|\mathbf{x}-\mathbf{S}|^3} ~\d x_3 \wedge \d x_1  ~~+~~ 
                                \frac{x_3-S_3}{|\mathbf{x}-\mathbf{S}|^3} ~\d x_1 \wedge \d x_2  ~~+~~  \right) \nonumber \\
    & = & \left( \frac{1}{4\pi} \frac{\mathbf{x}-\mathbf{S}}{|\mathbf{x}-\mathbf{S}|^3} \right) ~~\cdot\wedge~~ [ ~\d x_2 \wedge \d x_3 ~,~~ \d x_3 \wedge \d x_1 ~,~~ \d x_1 \wedge \d x_2 ]^T
\end{eqnarray}
The quantity $\mathbf{E} = \frac{1}{4\pi} \frac{\mathbf{x}-\mathbf{S}}{|\mathbf{x}-\mathbf{S}|^3}$ can be readily identified with the electric field created by an unit point charge at $\mathbf{S}$. If $\mathcal{A}$ is a closed surface, then $\int_{\mathcal{A}} \mathbf{E}\cdot \d \mathbf{A} = \int_{\mathcal{A}} \psi_S = Q_{encl}~$, the charge enclosed by $\mathcal{A}$.



\section{Numerical Computation} \label{sec:computation}

For numerically computing the integrals in Equation (\ref{eq:U-final}) as well as for integrating $\psi_S$ over a given $\omega$ (or a part of it), we need to triangulate the singularity manifold $S$ as well as the candidate manifold $\omega$ into oriented simplices. In this section we describe the process of triangulation and how we can compute the integrals for pair of simplices on $\omega$ and $S$ using an increasing coordinate system.

In the remaining parts of this section
we consider a general $n$-dimensional manifold $\mathcal{M}$, immersed in $\mathbb{R}^d$, $d\geq n$.

\subsection{Triangulation and setting orientation of simplices} \label{sec:triangulation}

\begin{figure}[t]
  \begin{center}
    \includegraphics[width=0.6\textwidth, trim=220 170 220 160, clip=true]{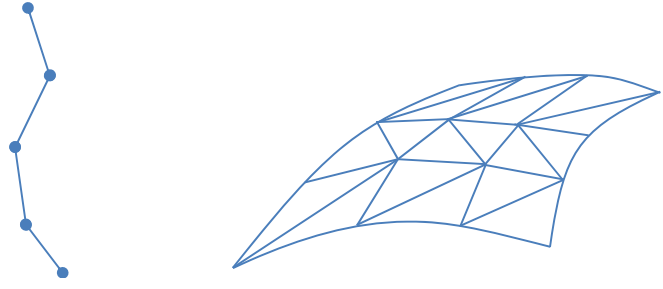}
  \end{center}
  \caption{Triangulation of a $1$ and a $2$-dimensional manifold.\label{fig:triangulation}}
\end{figure}

Given a $n$-dimensional manifold, $\mathcal{M}$, immersed in $\mathbb{R}^d$, $d\geq n$, one can triangulate $\mathcal{M}$ to obtain a homogeneous simplicial $n$-complex \cite{jost:Riemann:97} (\emph{i.e.} a set of $n$-simplices that share faces) (Figure~\ref{fig:triangulation}).
In practice it is relatively simple to create such a triangulation knowing a non-singular coordinate chart (or possibly an atlas) that parametrizes $\mathcal{M}$ (Figure~\ref{fig:D5N3-example}).

Let the set of $n$-dimensional simplices that we obtain in the process be $\mathcal{K}(\mathcal{M}) = \{ \kappa_1, \kappa_2, \cdots, \kappa_k \}$, where $\kappa_i$ is a $n$-simplex with $n+1$ ordered vertices. Let $\kappa_i = [ \mathbf{v}_{\kappa_i}^0, \mathbf{v}_{\kappa_i}^1, \cdots, \mathbf{v}_{\kappa_i}^n ]$, where $\mathbf{v}_{\kappa_i}^j \in \mathbb{R}^d$ is the coordinate of a vertex of $\kappa_i$. Note that the order of the vertices gives the orientation of the $n$-volume inside the simplex \cite{Massey:SingularHomology:80}.
One can write an oriented facet of $\kappa_i$ as $ (-1)^u [ \mathbf{v}_{\kappa_i}^0, \mathbf{v}_{\kappa_i}^1, \cdots, \mathbf{v}_{\kappa_i}^{u-1}, \mathbf{v}_{\kappa_i}^{u+1},\cdots, \mathbf{v}_{\kappa_i}^n ]$.

A particular $n$-simplex, $\kappa_i \in \mathcal{K}(\mathcal{M})$, will share facets with one or more other $n$-simplices in $\mathcal{K}(\mathcal{M})$.
Say $\kappa_i$ shares a facet with $\kappa_{i'}$. Thus they have $n$ common vertices and $1$ distinct vertex.
Say $\mathbf{v}_{\kappa_i}^p \in \kappa_i$ and $\mathbf{v}_{{\kappa_i}'}^q \in \kappa_{i'}$ are the distinct vertices. 
The orientations of $\kappa_i$ and $\kappa_{i'}$ need to be same. That is, the orientations should be such that their common facets cancel each other (so that $\partial (\kappa_i \sqcup \kappa_{i'}) = \partial \kappa_i  \sqcup  \partial \kappa_{i'}$), \emph{i.e.} we need to have
\begin{equation}
\begin{array}{l} \displaystyle
 (-1)^p \det\left(\left[ \mathbf{v}_{\kappa_i}^0, \mathbf{v}_{\kappa_i}^1, \cdots, \mathbf{v}_{\kappa_i}^{p-1}, \mathbf{v}_{\kappa_i}^{p+1},\cdots, \mathbf{v}_{\kappa_i}^n \right]\right) = \\  \displaystyle \qquad\qquad -(-1)^q \det\left(\left[ \mathbf{v}_{{\kappa_i}'}^0, \mathbf{v}_{{\kappa_i}'}^1, \cdots, \mathbf{v}_{{\kappa_i}'}^{q-1}, \mathbf{v}_{{\kappa_i}'}^{q+1},\cdots, \mathbf{v}_{{\kappa_i}'}^n \right]\right)
\end{array}
\end{equation}
Thus, after creating the set $\mathcal{K}(\mathcal{M})$, we start at an arbitrary $\kappa_i$, and check its neighbors (with which it shares a facet) whether they have the same orientation. If not, we flip the orientation of the oppositely oriented neighbors (exchanging two vertices in a simplex flips its orientation). Then we move on to the next level of neighbors whose orientation has not been checked, and so on. Since $\mathcal{M}$ is oriented, we will not encounter any inconsistency in this process of orienting the simplices. Let the final oriented set of simplices be called $\mathcal{K}_{oriented}(\mathcal{M})$.

\subsection{An increasing coordinate system for integration of a differential $n$-form on a $n$-simplex} \label{sec:coordtrans-integration}

The natural coordinate chart on $\mathbb{R}^d$ be described by the coordinate variables $\mathbf{y} = [y_1,y_2,\cdots,y_d]^T$. Consider the $n$-simplex, $\kappa = [ \mathbf{v}^0_{\kappa}, \mathbf{v}^1_{\kappa}, \cdots, \mathbf{v}^n_{\kappa} ]$, $\mathbf{v}^i_{\kappa} \in \mathbb{R}^d$. For the coordinates of the vertices $\mathbf{v}^i_{\kappa}$, we write $\mathbf{v}^i_{\kappa} = [v^i_1, v^i_2,\cdots,v^i_d]^T ,~~i=1,2,\cdots,n$.

For a given oriented simplex, $\kappa$, we define the following affine transformation relating coordinate variables $\mathbf{y}$ with a new set of coordinate variables $\mathbf{z}$, using the $d\times n$ matrix $M^\kappa$,
\begin{equation}
 \left[ \begin{array}{c}
  y_1 \\ y_2 \\ \vdots \\ y_d
 \end{array} \right] 
 = M^\kappa
 \left[ \begin{array}{c}
   z_1 \\ z_2 \\ \vdots \\ z_n
  \end{array} \right] +
 \left[ \!\! \begin{array}{c}
  v^0_1 \\ v^0_2 \\ \vdots \\ v^0_d
 \end{array} \!\! \right] \label{eq:barycentric-transformation}
\end{equation}
such that the vertices of $\kappa$ are mapped as follows,
{\small \begin{equation} \begin{array}{c}
  \left[ \! \begin{array}{c}
  v^1_1 \\ v^1_2 \\ \vdots \\ v^1_d
 \end{array} \! \right] - \left[ \! \begin{array}{c}
  v^0_1 \\ v^0_2 \\ \vdots \\ v^0_d
 \end{array} \! \right] = M^\kappa
 \left[ \! \begin{array}{c}
   0 \\ \vdots \\ 0 \\ 0 \\ 1
  \end{array} \! \right] ~~~,~~~
 \left[ \! \begin{array}{c}
  v^2_1 \\ v^2_2 \\ \vdots \\ v^2_d
 \end{array} \! \right] - \left[ \! \begin{array}{c}
  v^0_1 \\ v^0_2 \\ \vdots \\ v^0_d
 \end{array} \! \right] = M^\kappa
 \left[ \! \begin{array}{c}
   0 \\ \vdots \\ 0 \\ 1 \\ 1
  \end{array} \! \right] ~~~,~ \cdots ~,~ \\ \qquad\qquad\qquad\qquad\qquad\qquad \cdots ~,~~~ 
 \left[ \! \begin{array}{c}
  v^n_1 \\ v^n_2 \\ \vdots \\ v^n_d
 \end{array} \! \right] - \left[ \! \begin{array}{c}
  v^0_1 \\ v^0_2 \\ \vdots \\ v^0_d
 \end{array} \! \right] = M^\kappa
 \left[ \! \begin{array}{c}
   1 \\ \vdots \\ 1 \\ 1 \\ 1
  \end{array} \! \right] 
\end{array} \label{eq:vertex-map}
\end{equation}}
It is easy to solve for the elements of the $d\times n$ matrix, $M^\kappa$ from (\ref{eq:vertex-map}). For example, $M^\kappa_{a,n} = v^1_a-v^0_a$, $~~M^\kappa_{a,n-1} = M^\kappa_{a,n} - (v^2_a-v^0_a)$, etc.

From (\ref{eq:barycentric-transformation}) we have,
\[
 \d y_l = M^\kappa_{l,\mathbf{:}}~ \d \mathbf{z} = \sum_{s=1}^n M^\kappa_{l,s} \d z_s
\]
where $M^\kappa_{l,\mathbf{:}}$ represents the $l^{th}$ row of $M^\kappa$. It thus follows,
\begin{equation}
 \d y_{\sigma(1)} \wedge \d y_{\sigma(2)} \wedge \cdots \wedge \d y_{\sigma(n)} = \det(M^\kappa_{\sigma(1:n),\mathbf{:}}) ~\d z_1 \wedge \d z_2 \wedge \cdots \wedge \d z_n \label{eq:n-form-coord-transform}
\end{equation}
where, $\sigma$ is some permutation of $\{1,2,\cdots,d\}$, and $M^\kappa_{\sigma(1:n),\mathbf{:}}$ represents a $n\times n$ matrix formed by taking the rows $\sigma(1),\sigma(2),\cdots,\sigma(n)$ of $M^\kappa$.

Consider the general differential $n$-form,
\[ \sum_h \mathcal{J}_h(\mathbf{y}) ~\d y_{\sigma_h(1)} \wedge \d y_{\sigma_h(2)} \wedge \cdots \wedge \d y_{\sigma_h(n)} \]
where, $\sigma_h$ are permutations of $\{1,2,\cdots,d\}$, and $\mathcal{J}_h : \mathbb{R}^d \to \mathbb{R}$ are $0$-forms (\emph{i.e.} functions).
From (\ref{eq:barycentric-transformation}) and (\ref{eq:n-form-coord-transform}), we thus have,
\begin{equation}
\begin{array}{l} \displaystyle
 \sum_h \mathcal{J}_h(\mathbf{y}) ~\d y_{\sigma_h(1)} \wedge \d y_{\sigma_h(2)} \wedge \cdots \wedge \d y_{\sigma_h(n)} \\
   \displaystyle \qquad\qquad = \left( \sum_h \mathcal{J}_h(M^\kappa \mathbf{z} + \mathbf{v}^0_{\kappa}) \det(M^\kappa_{\sigma_h(1:n),\mathbf{:}})\right) \d z_1 \wedge \d z_2 \wedge \cdots \wedge \d z_n
\end{array}
\end{equation}

Also, a consequence of the vertex mapping (\ref{eq:vertex-map}) is that we obtain simple integration limits for integrating a differential $n$-form inside $\kappa$. It is easy to note that $\kappa$ in this coordinate system can be described as $\kappa = \{\mathbf{z} ~|~ 0 \leq z_1 \leq z_2 \leq \cdots \leq z_n \leq 1 \}$. Thus,
{\small \begin{equation}
\begin{array}{l} \displaystyle
 \int_\kappa \sum_h \mathcal{J}_h(\mathbf{y}) ~\d y_{\sigma_h(1)} \wedge \d y_{\sigma_h(2)} \wedge \cdots \wedge \d y_{\sigma_h(n)} ~~= \\
   \displaystyle \qquad \int_0^1 \int_0^{z_n} \int_0^{z_{n-1}} \cdots \int_0^{z_2} \left( \sum_h \mathcal{J}_h(M^\kappa \mathbf{z} + \mathbf{v}^0_{\kappa}) \det(M^\kappa_{\sigma_h(1:n),\mathbf{:}})\right) \d z_1 \wedge \d z_2 \wedge \cdots \wedge \d z_n 
\end{array} \label{eq:integration-over-general-simplex}
\end{equation}}

Thus for the general manifold $\mathcal{M}$ with triangulation described in Section \ref{sec:triangulation}, we have,
{\small \begin{equation}
\begin{array}{l} \displaystyle
 \int_{\mathcal{M}} \sum_h \mathcal{J}_h(\mathbf{y}) ~\d y_{\sigma_h(1)} \wedge \d y_{\sigma_h(2)} \wedge \cdots \wedge \d y_{\sigma_h(n)} ~~\simeq \\
   \displaystyle \!\! \sum_{\kappa\in\mathcal{K}_{oriented}(\mathcal{M})\qquad} \!\!\!\!\!\!\!\!\!\!\! \int_0^1 \int_0^{z_n} \cdots \int_0^{z_2} \left( \sum_h \mathcal{J}_h(M^\kappa \mathbf{z} + \mathbf{v}^0_{\kappa}) \det(M^\kappa_{\sigma_h(1:n),\mathbf{:}})\right) \d z_1 \wedge \d z_2 \wedge \cdots \wedge \d z_n 
\end{array} \label{eq:integration-over-triangulated-manifold}
\end{equation}}

%

\vspace{0.1in}
We can now use the above treatment on singularity manifolds, $S_i$, and a candidate manifold $\omega$ (or parts of those) and use the formula (\ref{eq:integration-over-triangulated-manifold}) to compute the integrals in (\ref{eq:U-final}) and (\ref{eq:homotopy-invarient}). It is to be noted that although the triangulation is an approximation of the manifold $\mathcal{M}$, in our particular problem as long as the simplicial complexes are close enough to the singularity and candidate manifolds so as to not represent a different $\chi$-homotopy class, we can expect to obtain the same value of $\chi_{\widetilde{\mathcal{S}}}(\omega)$ even using the triangulation.


The choice of $\mathcal{G}_k$ (Equation (\ref{eq:G-k})) lets us perform the first level of integration in (\ref{eq:U-final}) analytically. In particular, using formula (\ref{eq:integration-over-triangulated-manifold}), the first level of integration in (\ref{eq:U-final}) is of the form
\[
 \int_0^{z_2} \frac{ p z_1 + q }{ (a {z_1}^2 + b z_1 + c)^{D/2} } dz_1
\]
where, $p,q,a,b$ and $c$ are functions of $z_i, i\geq2$ and the simplex $\kappa$ on which the integration is being performed. The result of this integration is known in closed form \cite{Jeffrey:Formulae:00}.


\section{Numerical Results and Applications} \label{sec:results}

We implemented the above procedures for computing $\chi_{\widetilde{\mathcal{S}}}(\omega)$ in C++ programming language for arbitrary $D$ and $N$.
We extensively used the Armadillo linear programming library \cite{Armadillo} for all vector and matrix operations, and the GNU Scientific Library \cite{GSL:Manual} for all the numerical integrations.

\subsection{An example for $D=5,N=3$} \label{sec:5d-validation}

In Section~\ref{sec:theo-validation} we have shown that the general formulation we have proposed in this paper indeed reduces to known formulae that gives us $\chi$-homotopy class invariants for certain low dimensional cases. In this section we present numerical validation for a higher dimensional case. While we want the example to be non-trivial, we would also like it to be such that the results obtained numerically can be interpreted and verified without much difficulty. Hence we consider the following example.

Consider $D=5$ and $N=3$. The candidate manifold hence needs to be $N-1=2$-dimensional. We consider a $2$-sphere centered at the origin in $\mathbb{R}^5$ as the candidate manifold. In particular, we consider a family of candidate manifolds that is described by
\begin{equation}
 \omega(R_C) = \{ \mathbf{x}~~ | ~~x_1^2 + x_2^2 + x_3^2 = R_C^2, ~~x_4=0, ~~x_5=0 \}
\end{equation}
Correspondingly, a possible $\Omega(R_C)\in\mathbf{\Omega}(\omega(R_C))$ is hence given by,
\begin{equation}
 \Omega(R_C) = \{ \mathbf{x}~~ | ~~x_1^2 + x_2^2 + x_3^2 \leq R_C^2, ~~x_4=0, ~~x_5=0 \} \label{eq:BigOmg-RC}
\end{equation}
A candidate manifold, $\omega(R_C)$, can be parametrized, which in turn can be conveniently used for triangulation (see Figure~\ref{fig:sphere-triangulation}), using two parameters, $\theta\in[-\frac{\pi}{2},\frac{\pi}{2}]$ and $\phi\in[0,2\pi]$, as follows,
\begin{equation}
 \begin{array}{l}
  x_1 = R_C \cos(\theta)\cos(\phi) \\
  x_2 = R_C \cos(\theta)\sin(\phi) \\
  x_3 = R_C \sin(\theta) \\
  x_4 = 0 \\
  x_5 = 0
 \end{array} \label{eq:num-an-omega}
\end{equation}

We consider a single connected component as the singularity manifold, $S$, that is described by a $2$-torus (Figure~\ref{fig:torus-triangulation}) as follows,
\begin{equation}
 \begin{array}{l}
  x_1 = 0 \\
  x_2 = 0 \\
  x_3 = \left( R_T + r \cos(\phi') \right) \cos(\theta') - (R_T + r) \\
  x_4 = \left( R_T + r \cos(\phi') \right) \sin(\theta') \\
  x_5 = r \sin(\phi)
 \end{array} \label{eq:num-an-S}
\end{equation}
with $R_T > r$ and the parameters $\theta'\in[0,2\pi]$ and $\phi'\in[0,2\pi]$. For all examples that follow, we choose $r=0.8, R_T=1.6$.

\begin{figure*}                                                            
  \centering
  \subfigure[Triangulation of the singularity manifold projected on the space of $x_3, x_4, x_5$.]{
      \label{fig:torus-triangulation}
      \includegraphics[width=0.4\textwidth, trim=100 200 100 200, clip=true]{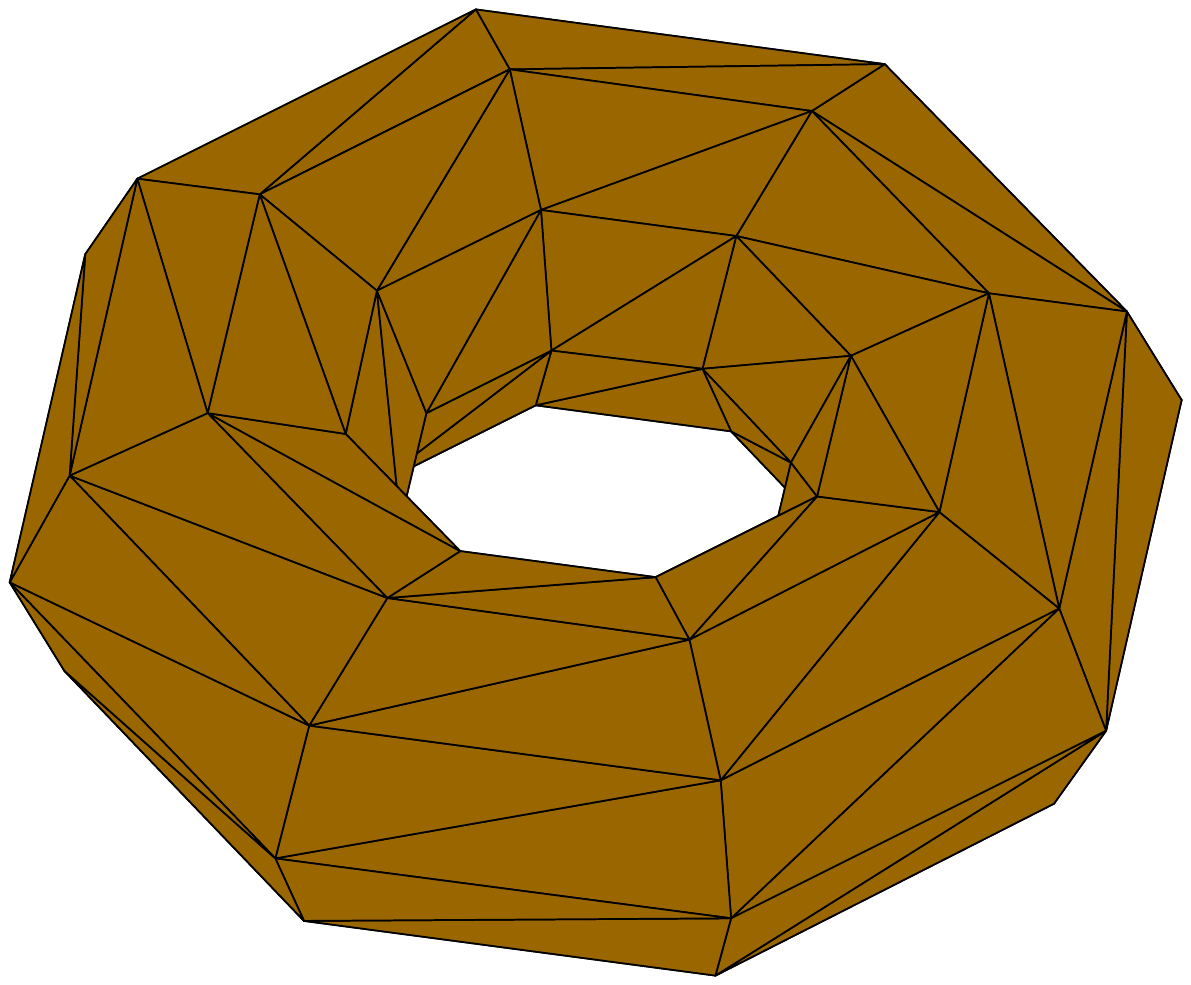}
      } \hspace{0.02in}
  \subfigure[Triangulation of a candidate manifold projected on the space of $x_1, x_2, x_3$.]{
      \label{fig:sphere-triangulation}
      \includegraphics[width=0.4\textwidth, trim=100 200 100 200, clip=true]{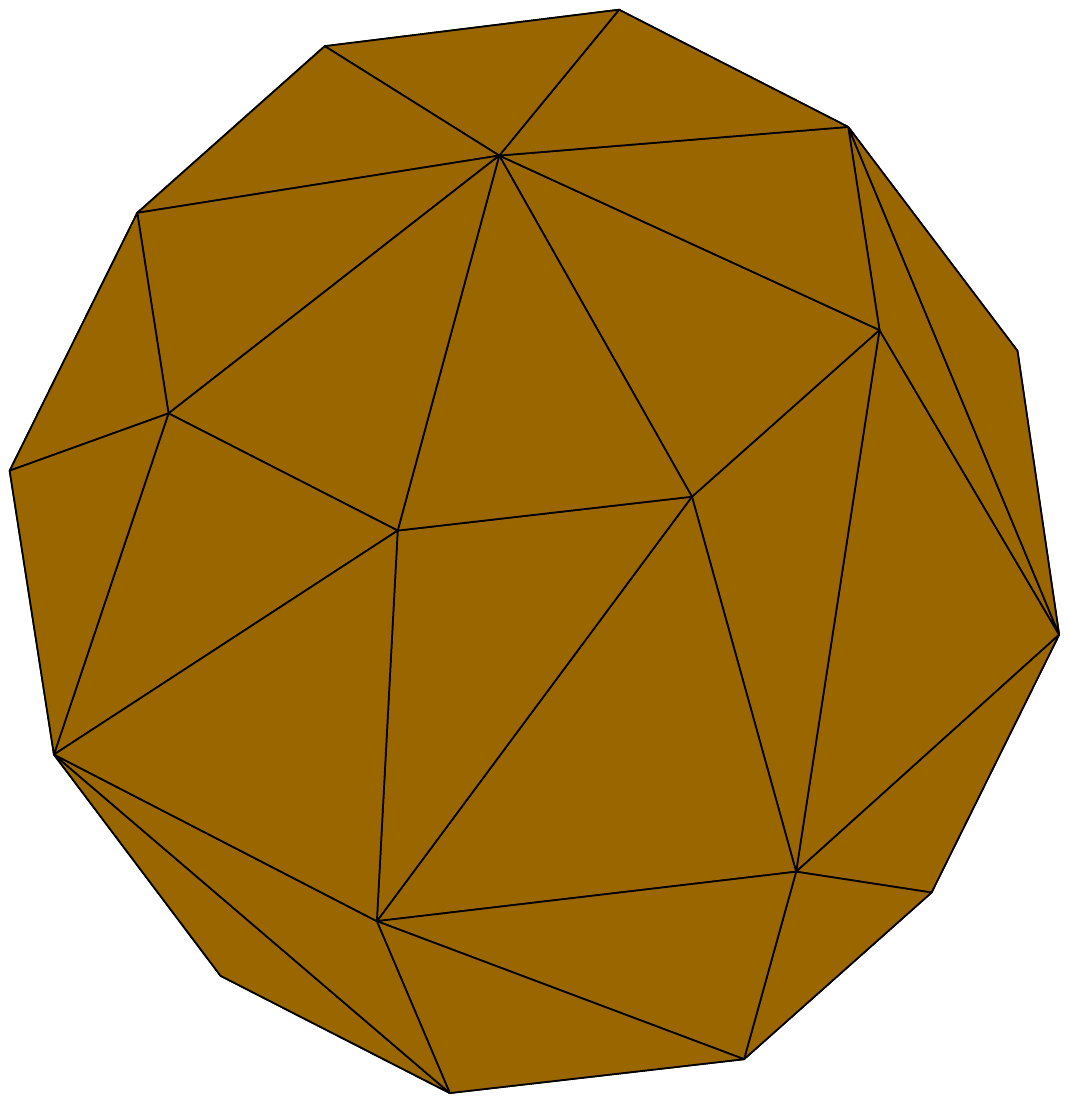}
      }
  \caption{Illustration of triangulation using parameters $\theta'$, $\phi'$, $\theta$ and $\phi$ for the example in \ref{sec:5d-validation}.}
  \label{fig:D5N3-example}                                             
\end{figure*}

Now consider the particular candidate manifold $\omega(1.0)$ (\emph{i.e.} $R_C=1.0$). Using numerical computation, the value of $\chi_S(\omega(1.0))$ that we obtain for the above example is $-1$.
In order to interpret this result we first observe that $\omega(1.0)$ does not intersect $S$ (\emph{i.e.} there is no common solution for (\ref{eq:num-an-omega}) and (\ref{eq:num-an-S}) with $R_C=1.0,r=0.8, R_T=1.6$). However on $S$ (Equations (\ref{eq:num-an-S})), when $x_1=x_2=x_4=x_5=0$, $x_3$ can assume the values $0$, $-2r$, $-2R_T$ and $-2(R_T + r)$. Thus, if $2r > R_C$, $S$ intersects $\Omega(R_C)$ (Equation (\ref{eq:BigOmg-RC})) only at one point, \emph{i.e.} the origin. Since that is an odd number of intersection with $\Omega(R_C)$ (\emph{i.e.} the \emph{inside} of $\omega(R_C)$), and the minimum it can be, clearly the value of $\chi_S(\omega(R_C))=\pm 1~~\forall R_C<2r$, as indicated by the numerical analysis. The sign is not of importance since that is determined by our choice of orienting the manifold during triangulation. In fact, with different values of $R_C, r$ and $R_T$, as long as $R_T > r > \frac{R_C}{2}$ is satisfied, numerically we obtain the same value of $-1$ for $\chi_S(\omega(R_C))$. So do we obtain by perturbation of the orientations/deformation of the sphere or torus.

However with $R_C=2.0$ for the candidate manifold, and the singularity manifold remaining the same (\emph{i.e.} $r=0.8, R_T=1.6$), the value of $\chi_S(\omega(2.0))$ we obtain numerically is $0$. In this case, the points at which $S$ intersect $\Omega(2.0)$ are the origin and the point $(x_1=x_2=x_4=x_5=0,x_3=-0.8)$. Of course, in the family of candidate manifolds $\omega(R_C),~R_C\in[1.0,2.0]$, we can easily observe that $\omega(1.6)$ indeed intersects $S$, thus indicating $\omega(1.0)$ and $\omega(2.0)$ of different $\chi$-homotopy classes.

Next, consider the following family of candidate manifolds,
\begin{equation}
 \omega'(T_C) = \{ \mathbf{x}~~ | ~~x_1^2 + x_2^2 + x_3^2 = 2.0, ~~x_4=0, ~~x_5=T_C \}
\end{equation}
And a corresponding $\Omega'(T_C)\in\mathbf{\Omega}(\omega'(T_C))$
\begin{equation}
 \Omega'(T_C) = \{ \mathbf{x}~~ | ~~x_1^2 + x_2^2 + x_3^2 \leq 2.0, ~~x_4=0, ~~x_5=T_C \}
\end{equation}
With the same $S$ as before, if $T_C>r$, clearly there is no intersection between $\Omega'(T_C)$ and $S$. Thus it is not surprising that indeed by numerical computation, we found that $\chi_S(\omega'(1.0)) = 0$.

Now, since we found that $\chi_S(\omega(2.0)) = 0$ (although $\Omega(2.0)$ intersects $S$ at $2$ points) and $\chi_S(\omega'(1.0)) = 0$ (and $\Omega'(1.0)$ does not intersect $S$), thus suggesting that $\omega(2.0)$ and $\omega'(1.0)$ are in the same $\chi$-homotopy class, we will actually try to verify that from the definition of $\chi$-homotopy classes (Definition~\ref{def:homotopy}).
It is easy to verify that none from the family of candidate manifolds $\omega'(T_C),~\forall T_C\in[0.0,1.0]$ intersect $S$. Thus, there is a path in $\mathbf{\varpi}^2_5$ connecting $\omega'(0.0)$ and $\omega'(1.0)$, \emph{i.e.} they are $\chi$-homotopic by definition. However, $\omega(2.0)\equiv\omega'(0.0)$. Thus it follows that $\omega(2.0)$ and $\omega'(1.0)$ are $\chi$-homotopic.


%
%

\subsection{Partitions of the candidate manifold}

\subsubsection{$2$-partition}


Suppose we are given a $(N-2)$-dimensional boundaryless manifold, $\lambda$ (which we will call the \emph{reference manifold}), embedded in $\mathbb{R}^D$, and two $(N-1)$-dimensional manifolds $\Lambda_1$ and $\Lambda_2$, such that $\lambda = \partial\Lambda_1 = \partial\Lambda_2$. Let us denote the set of all possible manifolds, $\Lambda$, such that $\partial\Lambda=\lambda$, by $\mathbf{\Lambda}(\lambda)$. Thus, $\Lambda_1, \Lambda_2 \in \mathbf{\Lambda}(\lambda)$. Noting that $\Lambda_1 \sqcup -\Lambda_2$ (where the $-$ve sign implies opposite orientation) is a $(N-1)$-dimensional boundaryless manifold, we define a candidate manifold $\omega = \Lambda_1 \sqcup -\Lambda_2$. Essentially, $\Lambda_1$ along with $-\Lambda_2$ forms a $2$-partition on $\omega$. Thus we have,
\begin{equation}
 \int_\omega \psi_{\widetilde{\mathcal{S}}} ~~=~~ \int_{\Lambda_1} \psi_{\widetilde{\mathcal{S}}} ~~-~~ \int_{\Lambda_2} \psi_{\widetilde{\mathcal{S}}} \label{eq:partition-main}
\end{equation}
Now consider another candidate manifold $\omega' = \Lambda_1 \sqcup -\Lambda'_1$, where $\Lambda'_1$ is a manifold that differs infinitesimally from $\Lambda_1$, does not intersect $\Lambda_1$, and shares the same boundary $\lambda = \partial\Lambda_1 = \partial\Lambda'_1$. Of course $\Lambda_1$ can be continuously deformed into $\Lambda'_1$ without intersecting $\widetilde{\mathcal{S}}$. Since $\Lambda'_1$ differs infinitesimally from $\Lambda_1$,
\begin{equation}
 \int_{\Lambda_1} \psi_{\widetilde{\mathcal{S}}} ~~-~~ \int_{\Lambda'_1} \psi_{\widetilde{\mathcal{S}}} ~~=~~ \mathbf{0} ~~=~~ \int_{\omega'} \psi_{\widetilde{\mathcal{S}}} \label{eq:partition-inf}
\end{equation}
Now, suppose $\int_\omega \psi_{\widetilde{\mathcal{S}}}=\mathbf{0}$ (where $\mathbf{0}$ represents a $m$-vector of zeros). Then from (\ref{eq:partition-main}) and (\ref{eq:partition-inf}), $\omega$ and $\omega'$ are in the same $\chi$-homotopy class. However, $\omega$ and $\omega'$ have a common partition, $\Lambda_1$. Thus it is just the continuous deformation of $\Lambda_2$ into $\Lambda'_1$ that is equivalent to continuously deforming $\omega$ into $\omega'$ (which, by hypothesis, are in the same $\chi$-homotopy class). However $\Lambda'_1 \approx \Lambda_1$, and $\int_\omega \psi_{\widetilde{\mathcal{S}}}=\mathbf{0} ~\Rightarrow~ \int_{\Lambda_1} \psi_{\widetilde{\mathcal{S}}} = \int_{\Lambda_2} \psi_{\widetilde{\mathcal{S}}}$. Thus we have the following definition,
\begin{definition}[$\chi$-homotopy of $(N-1)$-dimensional manifolds with a fixed boundary]
 Given a $(N-2)$-dimensional boundaryless reference manifold, $\lambda$, two $(N-1)$-dimensional manifolds $\Lambda_1, \Lambda_2 \in \mathbf{\Lambda}(\lambda)$ are called $\chi$-homotopic iff,
 \[ \int_{\Lambda_1} \psi_{\widetilde{\mathcal{S}}} = \int_{\Lambda_2} \psi_{\widetilde{\mathcal{S}}} \]
\end{definition}
This is illustrated in Figure~\ref{fig:partition-illustration}. Thus we define for a $\Lambda \in \mathbf{\Lambda}(\lambda)$ the $\chi$-homotopy invariant as,
\begin{equation}
 \chi_{\widetilde{\mathcal{S}}}(\Lambda;\lambda) = \int_{\Lambda} \psi_{\widetilde{\mathcal{S}}}
\end{equation}
The second parameter in $\chi_{\widetilde{\mathcal{S}}}(\Lambda;\lambda)$ is to emphasize the fact that the domain of the first parameter is $\mathbf{\Lambda}(\lambda)$, \emph{i.e.}, $\Lambda\in\mathbf{\Lambda}(\lambda)$. In general the codomain of this function is $\mathbb{R}^m$.

\begin{figure}[t]
  \begin{center}
   \subfigure[Any $\Lambda_i$ and $-\Lambda_j$, $i\neq j$, together forms a $\omega\in\mathbf{\varpi}^{N-1}_D$, thus making $\{\Lambda_i,-\Lambda_j\}$ a $2$-partition of the $\omega$. Note that $\partial\Lambda_i = \lambda$ is same for all $\Lambda_i$ (here in the figure it has $2$ connected components). The $\chi$-homotopy invariant of the partitions themselves are defined such that in this case $\chi_{\widetilde{\mathcal{S}}}(\Lambda_1) \neq \chi_{\widetilde{\mathcal{S}}}(\Lambda_2) = \chi_{\widetilde{\mathcal{S}}}(\Lambda_3) $.]{
    \label{fig:partition-illustration}
    \includegraphics[width=0.45\textwidth, trim=150 160 150 160, clip=true]{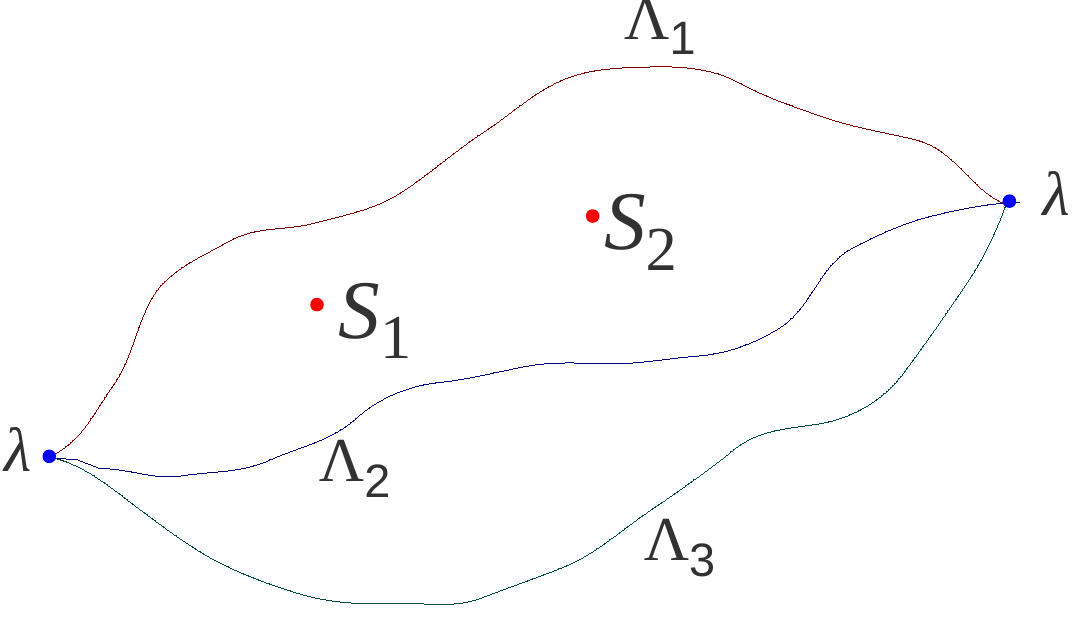} } \hspace{0.02in}
   \subfigure[ In this example (with $D=3,N=2$), there are $2$ connected components of $\widetilde{\mathcal{S}}$. We use homotopy equivalents of the obstacles. $S_1$ is the curve passing through the central axis of the knot, $S_2$ is the one passing inside the torus. In this particular example, $\chi_{\widetilde{\mathcal{S}}}(\Lambda_1) = {[} -0.7870 , 0.8364 {]}^T$, $\chi_{\widetilde{\mathcal{S}}}(\Lambda_2) = {[} -1.7870 , -0.1636 {]}^T$, and $\chi_{\widetilde{\mathcal{S}}}(\Lambda_1) = {[} 0.2130 , -0.1636 {]}^T$.\newline Note how $\chi_{\widetilde{\mathcal{S}}}(\Lambda_i) - \chi_{\widetilde{\mathcal{S}}}(\Lambda_j) \in \mathbb{Z}^2$.]{
    \label{fig:robot-example}
    \includegraphics[width=0.45\textwidth, trim=0 0 0 0, clip=true]{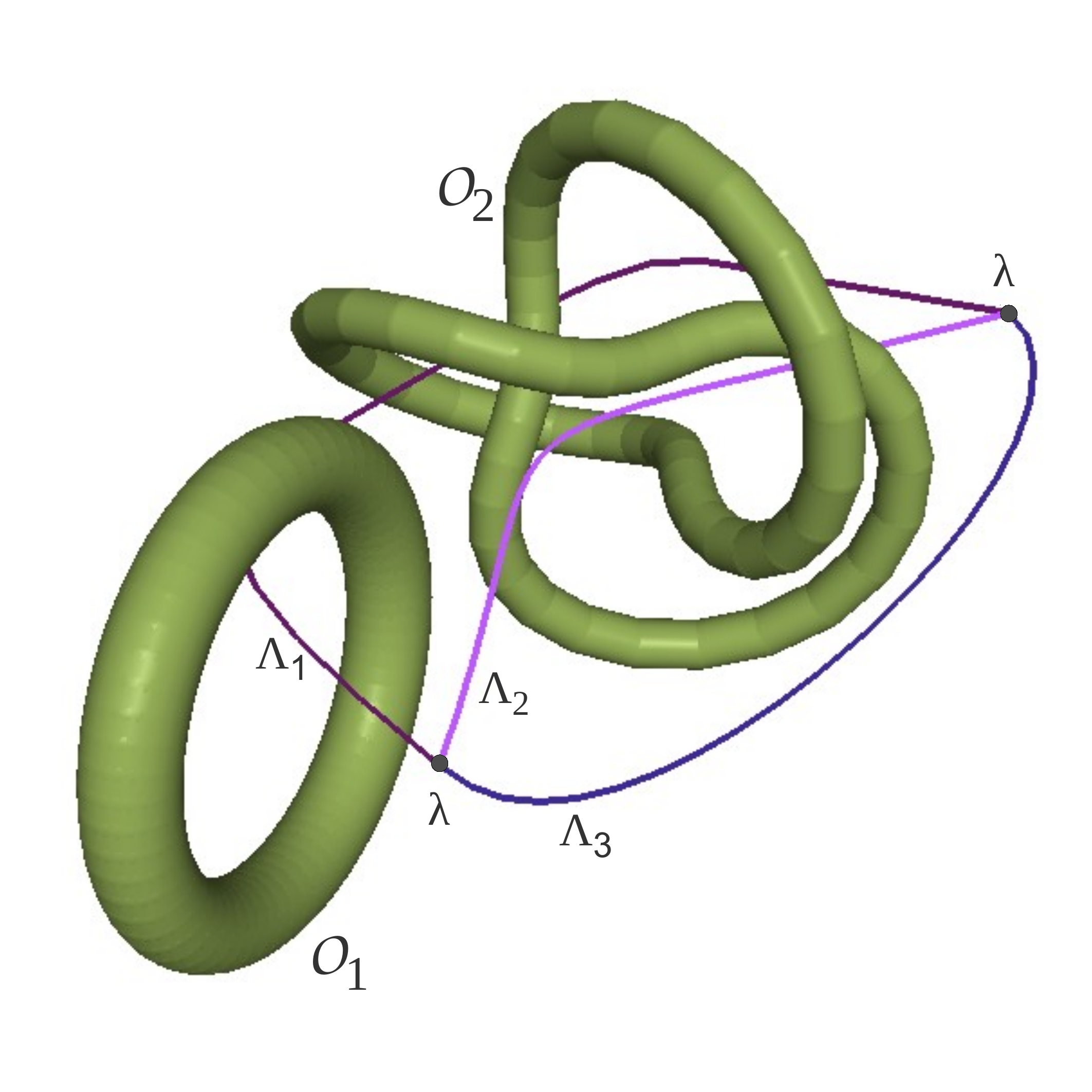} }
  \end{center}
  \caption{$2$-partition on candidate manifolds.\label{fig:robot-trajs}}
\end{figure}



\subsubsection{Additivity of the function `$\chi$'}

By its very definition in form of an integration, $\chi_{\widetilde{\mathcal{S}}}$ is an additive function. Thus, if we create an arbitrary partition of any $N-1$-dimensional sub-manifold of $\mathbb{R}^D$, $\mathcal{M} =  M_1 \sqcup M_2 \sqcup \cdots $, and define $\chi_{\widetilde{\mathcal{S}}}(M_i) = \int_{M_i} \psi_{\widetilde{\mathcal{S}}}$, we immediately have $\chi_{\widetilde{\mathcal{S}}}(\mathcal{M}) = \sum_i  \chi_{\widetilde{\mathcal{S}}}(M_i)$. Thus we extend the concept of the evaluation of the $\chi$ function on arbitrary $(N-1)$-dimensional manifolds.
\begin{definition}[$\chi$-value of an $N$-dimensional manifold] \label{def:chi-value}
 Given an arbitrary $N$-dimensional manifold, $M$, we define \[ \chi_{\widetilde{\mathcal{S}}}(M) = \int_{M} \psi_{\widetilde{\mathcal{S}}} \] and call it the $\chi$-value of $M$.
\end{definition}
It is to be noted that this definition of $\chi_{\widetilde{\mathcal{S}}}(\cdot)$ on arbitrary manifolds does not itself signify any invariant.


\subsection{Application to Robot Path Planning ($N=2$)} \label{sec:application-robots}

The treatment in the previous section finds its immediate application in the problem of robot path planning with homotopy class constraints or exploration of different homotopy classes in an environment.
In problems like that of exploration and mapping of a partially known environment by multiple robots, the knowledge of the different homotopy classes is useful for their deployment. Also, the notion of \emph{visibility} in robot path planning is often intrinsically linked with homotopy.
In such problems we have $N=2$. Thus, the candidate manifolds are $N-1=1$-dimensional curves, which in fact signify trajectories of robots (Figure~\ref{fig:robot-trajs}). Typically, a trajectory (which is a $1$-dimensional manifold) can never have disconnected components. Thus the $\omega\in\mathbf{\varpi}^{N-1}_D$ that we will be dealing with belong to the same homeomorphism class, thus making $\chi$-homotopy same as homotopy in $\mathbb{R}^D \setminus \widetilde{\mathcal{S}}$ (see Note~\ref{note:homotopy-vs-chi-homotopy}).

Obstacles in a robot's $D$-dimensional configuration space are, in general, $D$-dimensional. However, we can construct $D-N=D-2$ dimensional homotopy equivalents for such obstacles.
For example, in Figure~\ref{fig:robot-example}, both the obstacles are $3(=D)$ dimensional. However we can use the $1(=D-N)$ dimensional central/axial curves lying inside the obstacles in order to define the connected components of the singularity manifolds, $S_1$ and $S_2$.

In our previous work \cite{Homotopy:Bhattacharya:10} we have used a similar treatment for trajectories in $\mathbb{R}^2$ using theorems from complex analysis and have used heuristic graph search techniques for finding least cost paths with homotopy class constraints. We can now generalize the notion of homotopy classes of trajectories to higher dimensional configuration spaces ($D>2$).

\subsubsection{The $\chi$-augmented Graph}

Discrete graph search techniques in solving robot path planning problems are widely used and have been shown to be complete and efficient \cite{Ste95b,max:planning:08}.
Given a $D$-dimensional configuration space, the standard starting point is to discretize the configuration space to create a directed graph, $\mathcal{G}=(\mathcal{V},\mathcal{E})$. The discretization itself can be quite arbitrary and non-uniform in general.
The nodes $\mathbf{v}=[x_1,x_2,\cdots,x_D]\in\mathcal{V}$ represents the coordinate of the centroid of a discretized cell, and a directed edge connects node $\mathbf{v}_1$ to $\mathbf{v}_2$ iff
there is a single action of the robot that can take it from $\mathbf{v}_1$ to $\mathbf{v}_2$,
and is represented by $\{\mathbf{v}_1 \rightarrow \mathbf{v}_2\}\in\mathcal{E}$.
Since an edge $\{\mathbf{v}_1 \rightarrow \mathbf{v}_2\}\in\mathcal{E}$ is a $1$-dimensional manifold, we can define the $\chi$-value of the edge (Definition \ref{def:chi-value}) and represent it by $\chi_{\widetilde{\mathcal{S}}}(\{\mathbf{v}_1 \rightarrow \mathbf{v}_2\})$.
The weight of each edge is the cost of traversing that edge by the robot (typically the metric length of the edge). We write $w(\{\mathbf{v}_1 \rightarrow \mathbf{v}_2\})$ to represent the weight of an edge.
Inaccessible coordinates (lying inside obstacles or outside a specified workspace) do not constitute nodes of the graph. A path in this graph represents a trajectory of the robot in the configuration space. The triangulation of any path in the graph essentially consists of the directed edges of the graph that make up the path.


Suppose we are given a fixed start and a fixed goal coordinate (represented by $\mathbf{v}_s,\mathbf{v}_g\in\mathbb{R}^D$ respectively) for the robot. These two points together form the boundary of any $N-1=1$-dimensional trajectory of a robot (see Figures~\ref{fig:robot-trajs}). In accordance to our previous discussion, those points form the $N-2=0$-dimensional reference manifold, $\lambda_{sg}$. That is,
\[ \lambda_{sg} = \mathbf{v}_s \sqcup \mathbf{v}_g \]
We next construct an augmented graph, $\mathcal{G}_\chi$, from the graph $\mathcal{G}$ in order to incorporate the information regarding the $\chi$-homotopy of trajectories leading from the given start coordinate to the goal coordinate as follows.

\[
 \mathcal{G}_\chi = \{ \mathcal{V}_\chi, \mathcal{E}_\chi \}
\]
where,
\begin{itemize}
 \item[1.]
    \[
     \mathcal{V}_\chi = \left\{ \{\mathbf{v},\mathbf{c}\} \left|
                       \begin{array}{l}
                        \mathbf{v} \in \mathcal{V}, \textrm{ and,} \\
                        \mathbf{c} \textrm{ is a $m$-vector such that } \mathbf{c} = \chi_{\widetilde{\mathcal{S}}}(\Lambda~;~\mathbf{v}_s \sqcup \mathbf{v}) \\ 
                                   \quad \textrm{ for some $1$-dimensional curve, $\Lambda$, connecting } \mathbf{v}_s \textrm{ to } \mathbf{v}. 
                       \end{array}
                     \right. \!\!\!\right\}
    \]

 \item[2.] An edge $\{ \{\mathbf{v},\mathbf{c}\} \rightarrow \{\mathbf{v'},\mathbf{c}'\} \}$ exists in $\mathcal{E}_\chi$ for $\{\mathbf{v},\mathbf{c}\} \in \mathcal{V}_\chi$ and $\{\mathbf{v'},\mathbf{c}'\} \in \mathcal{V}_\chi$, iff
   \begin{itemize}
    \item [i.] The edge $\{\mathbf{v} \rightarrow \mathbf{v'}\} \in \mathcal{E}$, and,
    \item[ii.] $\mathbf{c}' = \mathbf{c} + \chi_{\widetilde{\mathcal{S}}}(\{\mathbf{v} \rightarrow \mathbf{v'}\})$.
   \end{itemize}

 \item[3.] The cost/weight associated with an edge $\{ \{\mathbf{v},\mathbf{c}\} \rightarrow \{\mathbf{v'},\mathbf{c}'\} \}$ is same as the cost/weight associated with edge $\{\mathbf{v} \rightarrow \mathbf{v'}\} \in \mathcal{E}$. That is, the weight function we use is $w_\chi(\{ \{\mathbf{v},\mathbf{c}\} \rightarrow \{\mathbf{v'},\mathbf{c}'\} \}) = w(\{\mathbf{v} \rightarrow \mathbf{v}'\})$.

\end{itemize}
It can be noted that $\{\mathbf{v}_s,\mathbf{0}\}$ is in $\mathcal{V}_\chi$ (where $\mathbf{0}$ is an $m$-vector of zeros).

For finding a least cost path in $\mathcal{G}_\chi$ that belongs to a particular homotopy class, we can use a heuristic graph search algorithm (\emph{e.g.} weighted A*). In particular, we used the YAGSBPL library \cite{yagsbpl} for constructing the graph and performing A* searches in it. Starting from the start node $\{\mathbf{v}_s,\mathbf{0}\}$ we expand the nodes in $\mathcal{G}_\chi$. The process of node expansion eventually leads to nodes of the form $\{\mathbf{v}_g,\mathbf{c}_i\}$, where $\mathbf{c}_i=\chi_{\widetilde{\mathcal{S}}}(\Lambda;\lambda_{sg})$ for some $\Lambda\in\mathbf{\Lambda}(\lambda_{sg})$.
Each of these nodes in $\mathcal{G}_\chi$ correspond to an unique homotopy class of the path taken to reach $\mathbf{v}_g$ from $\mathbf{v}_s$.
Let those nodes in the order in which we expand them be $\{\mathbf{v}_g,\mathbf{c}_1\}$, $\{\mathbf{v}_g,\mathbf{c}_2\}$, etc. Say during the search process, we expand the node $\{\mathbf{v}_g,\mathbf{c}_j\}\in\mathcal{V}_\chi$.
Depending on whether we are trying to search for a particular homotopy class of trajectories or exploring multiple homotopy classes, 
we can choose to take one of the following actions:
\begin{itemize}
 \item[i.] If $\mathbf{c}_j$ is the desired value (or an admitted value) for the $\chi$-value of the trajectory we are searching for, we store the path up to $\{\mathbf{v}_g,\mathbf{c}_j\}$ in $\mathcal{G}_\chi$, and stop the search algorithm.
 \item[ii.] If $\mathbf{c}_j$ is an admitted value for the $\chi$-value of the trajectory we are searching for, we store the path up to $\{\mathbf{v}_g,\mathbf{c}_j\}$ in $\mathcal{G}_\chi$, and continue expanding nodes in $\mathcal{G}_\chi$.
 \item[iii.] If $\mathbf{c}_j$ is not an admitted value for the $\chi$-value of the trajectory we are searching for, we just continue expanding nodes in $\mathcal{G}_\chi$.
\end{itemize}
Clearly, the projection of any of the aforesaid stored trajectories onto $\mathcal{G}$ are ones from the set $\mathbf{\Lambda}(\lambda_{sg})$.
Since both $\mathcal{G}_\chi$ and $\mathcal{G}$ use the same cost function, if $\left\{ \{\mathbf{v}_s,\mathbf{0}\}, \{\mathbf{v}^{1*},\mathbf{c}^{1*}\}, \{\mathbf{v}^{2*},\mathbf{c}^{2*}\}, \cdots, \{\mathbf{v}_g,\mathbf{c}_j\} \right\}$ is the $j^{th}$ stored path using an optimal search algorithm (\emph{e.g} A*), then $\left\{ \mathbf{v}_s,\mathbf{v}^{1*}, \mathbf{v}^{2*}, \cdots, \mathbf{v}_g \right\}$ is the least cost path in $\mathcal{G}$ with $\chi$-value of $\mathbf{c}_j$ (\emph{i.e.} least cost path belonging to the particular homotopy class).
Thus we can explore the different homotopy classes of the trajectories connecting $\mathbf{v}_s$ and $\mathbf{v}_g$.

If $\mathbf{c}_g$ is the desired $\chi$-value of the trajectory we are searching for, 
we follow the above process of expanding the states using the graph search algorithm until we expand $\{\mathbf{v}_g,\mathbf{c}_g\}$.
Given two trajectories $\Lambda_1,\Lambda_2\in\mathbf{\Lambda}(\lambda_{sg})$, since $\Lambda_1 \sqcup -\Lambda_2 \in \mathbf{\varpi}^{N-1}_D$, we notice that $\chi_{\widetilde{\mathcal{S}}}(\Lambda_1;\lambda_{sg}) - \chi_{\widetilde{\mathcal{S}}}(\Lambda_2;\lambda_{sg}) \in \mathbb{Z}^m$. Thus, if we know the value of a $\mathbf{c}_j = \chi_{\widetilde{\mathcal{S}}}(\Lambda_j;\lambda_{sg})$, we can construct another $m$-vector that is a valid $\chi$-value of a trajectory in $\mathbf{\Lambda}(\lambda_{sg})$ as $\mathbf{c}_{j'} = \mathbf{c}_j + \zeta$ for some $\zeta \in \mathbb{Z}^m$, which we can set as $\mathbf{c}_g$ for finding the least cost path in that particular homotopy class.


A consequence of the point $3$ in the definition of $\mathcal{G}_\chi$ is that any \emph{admissible heuristics} (which is a lower bound on the cost to the goal node) 
in $\mathcal{G}$ will remain admissible in $\mathcal{G}_\chi$. That is, if $h(\mathbf{v},\mathbf{v}')$ was the heuristic function in $\mathcal{G}$, we can define $h_\chi(\{\mathbf{v},\mathbf{c}\},\{\mathbf{v}',\mathbf{c}'\}) = h(\mathbf{v},\mathbf{v}')$ as the heuristic function in $\mathcal{G}_\chi$. 
As a consequence, if we keep expanding states in $\mathcal{G}_\chi$ as described in the previous section, the order in which we will encounter states of the form $\{\mathbf{v}_g,\mathbf{c}_i\}$ is the order of the costs of the least cost paths in the different homotopy classes.



\subsubsection{Examples}

Examples of exploration of homotopy classes of trajectories as well as planning with homotopy class constraints in $2$-dimensional configuration spaces are detailed in \cite{Homotopy:Bhattacharya:10}.

\begin{figure}[t]
  \begin{center}
    \includegraphics[width=0.6\textwidth, trim=50 130 10 130, clip=true]{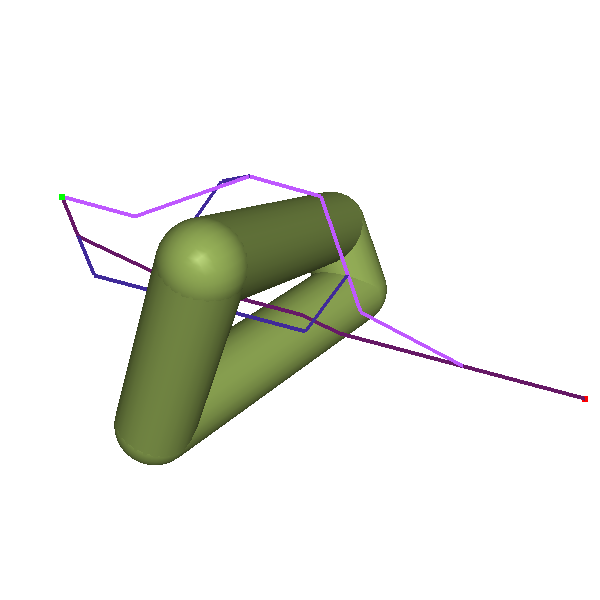}
  \end{center}
  \caption{Exploration of $3$ homotopy classes of robot trajectories for a $D=3$-dimensional configuration space.\label{fig:search-D3N2}}
\end{figure}

Figure~\ref{fig:search-D3N2} demonstrate an example of search for $3$ homotopy classes in a configuration space with $D=3$ and the spatial coordinates being the coordinate variables. The graph $\mathcal{G}$ is created by uniform discretization of the region of interest into $16\times 16\times 16$ cubic cells, and connecting the nodes corresponding to each cell to their immediate neighbors sharing at least one vertex of the cell.

In the accompanying video we show an example of planning in a $4$-dimensional configuration space. Three of the coordinate variables are the spatial coordinates, and the fourth is time. The graph, $\mathcal{G}$, is created by uniformly discretizing each of the spatial coordinates within the spatial domain of interest into $10$ divisions and the time coordinate within time range of interest into $20$ divisions. The connectivity of the graph is such that there can only be forward progress in time for any path in $\mathcal{G}$, and at every time step it is possible to move to one of the $26$ spatial neighbors or stay at the same place in space. There is a single loop-shaped obstacle in the environment that is rotating about an axis. The triangulation of the single connected component of the singularity manifold, $S$, is created by taking the central axis/curve of the rotating obstacle and sampling its configuration at small intervals of time. In order to close $S$ (\emph{i.e} make it boundary-less) in time, we connect the configuration of the central axis/curve of the obstacle at $t_\infty$ with that at $-t_\infty$, where $t_\infty$ was chosen to be a large value outside the time range of interest (Note that the time coordinate is dealt no differently from the spatial coordinates, thus making it a valid $4$-dimensional Euclidean space. The connectivity of the graph makes sure that the trajectories obtained by searching the graph do not go in the direction of negative time.). The video shows the least cost paths in $3$ different homotopy classes of trajectories in this environment.


\section{Conclusion and Future directions}

In this paper we have introduced an equivalence relation, $\chi$-homotopy, for $(N-1)$-dimensional boundaryless sub-manifolds of $\mathbb{R}^D \setminus \widetilde{\mathcal{S}} $, where $\widetilde{\mathcal{S}}$ is a $(D-N)$-dimensional boundaryless sub-manifold of $\mathbb{R}^D$. We have analytically shown that this equivalence relation is the one underlying some of the well-known theorems from complex analysis (Cauchy Integral theorem and Residue theorem, for $D=2,N=2$), theory of electromagnetism (Biot-Savart law and Ampere's law, $D=3,N=2$), and electrostatics (Gauss Divergence theorem, $D=3,N=3$).
It is to be noted that although $\chi$-homotopy has much similarity with standard homotopy, the notion of $\chi$-homotopy is absolutely essential in designing the invariant, $\chi_{\widetilde{\mathcal{S}}}$, as it is seen from the proof of the main theorem in the paper (Appendix~\ref{appendix:homotopy-lemma}).
However, as far as the results are concerned, if the candidate sub-manifolds are homeomorphic, $\chi$-homotopy is same as conventional homotopy in $\mathbb{R}^D \setminus \widetilde{\mathcal{S}} $.

We derived an invariant for $\chi$-homotopy and described ways of computing it in $D$ dimensions. We have shown using numerical examples that the proposed formula indeed gives the desired $\chi$-homotopy invariant for high dimensional spaces. We have described an application of the proposed theory to robot path planning with homotopy class constraints for a $4$-dimensional configuration space, thus demonstrating the applicability of the proposed theory.

In future we plan to extend the definition and derive an invariant for $\chi$-homotopy to sub-manifolds of arbitrary manifolds rather than Euclidean spaces. We also wish to develop a field theoretic background for convenient computation and further generalization of $\chi$-homotopy, much along the lines of the fields of hypercomplex numbers.
On the application side, we believe the notion of $\chi$-homotopy can be extremely useful in solving problems involving knots embedded in $3$-dimensions as well as higher dimensional extensions of such problems.





\section{Appendix} \label{sec:appendix}


\subsection{Proof of Lemma \ref{lemma:path-connected}} \label{appendix:pathconnected}

\vspace{0.01in}
\noindent \textbf{Statement of the Lemma.} \textit{
 The closure of $\mathbf{\varpi}^n_d$, i.e. $cl(\mathbf{\varpi}^n_d)$, is path connected.
}

\begin{proof}:

 Let's consider sub-manifolds $M, M' \in \mathbf{\varpi}^n_d$. If $M$ and $M'$ are homeomorphic, it is trivial to find a path in $\mathbf{\varpi}^n_d$ connecting $M$ and $M'$.
 Thus $\mathbf{\varpi}^n_d$ consists of connected components, each containing sub-manifolds of a particular homeomorphism class. We represent the connected component of $\mathbf{\varpi}^n_d$ containing sub-manifolds homeomorphic to a sub-manifold $M$ by $\widetilde{M}^n_d$. What we really need to prove now is that the closure of each of these connected components corresponding to different homeomorphism classes are connected.

 The proof follows directly from the fact that any two manifolds in $\mathbf{\varpi}^n_d$ are \emph{cobordant} \cite{Madsen:Cobordism}. An $n$-dimensional boundaryless, orientable manifold that can be expressed as the boundary of another compact $(n+1)$-dimensional manifold has all its Stiefel-Whitney numbers zero \cite{May:AlgebraicTopology}. Since Stiefel-Whitney numbers are cobordism invariants \cite{May:AlgebraicTopology}, the manifolds in $\mathbf{\varpi}^n_d$ are cobordant. Thus for any $M_1,M_2 \in \mathbf{\varpi}^n_d$ there exists the cobordism $(W;M_1,M_2)$. Thus there exists a Morse function $f:W\to [0,1]$ such that $f(m_1)=0~\forall m_1\in M_1$ and $f(m_2)=1~\forall m_2\in M_2$,
 and the function has no degenerate critical points. The pre-image of $[0,1]$ under the action of $f$ hence forms a path in $\mathbf{\varpi}^n_d$ that mostly lies in $\mathbf{\varpi}^n_d$, except for possible isolated removable singularities. Clearly the singularities occur when the path goes from one connected component of $\mathbf{\varpi}^n_d$ to another, and those being removable implies that the connected components of $\mathbf{\varpi}^n_d$ are open, but their closure is connected and share points at the boundaries (see Figure \ref{fig:cobordism}). Thus, it follows, 
 the pre-image of $[0,1]$ under the action of $f$ lies completely inside $cl(\mathbf{\varpi}^n_d)$. Since $M_1$ and $M_2$ were arbitrarily chosen, $cl(\mathbf{\varpi}^n_d)$ is path connected.

\myqed \end{proof}

\begin{figure}[t]
  \begin{center}
    \includegraphics[width=0.75\textwidth, trim=50 80 50 80, clip=true]{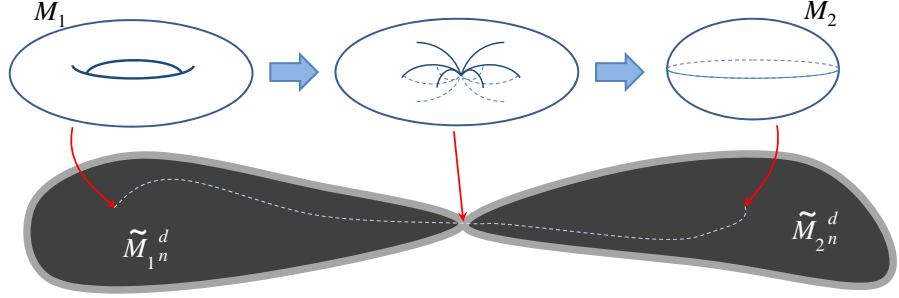}
  \end{center}
  \caption{Illustration of the space $\mathbf{\varpi}^n_d$.\label{fig:cobordism}}
\end{figure}


\subsection{Proof of Theorem \ref{theorem:homotopic}.} \label{appendix:homotopy-lemma}

\vspace{0.01in}
\noindent \textbf{Statement of the Theorem.} \textit{Suppose there exists an exact differential $N$-form, $\d\psi_S$, such that
\[
 \int_{B_\epsilon(\mathbf{r})} \d\psi_S ~~=~~ \left\{ \begin{array}{l}
                                           \phantom{\pm} 0, ~~\textrm{ iff }~ B_\epsilon(\mathbf{r}) \cap S = \emptyset, \\
                                           \pm 1, ~~\textrm{ iff }~ B_\epsilon(\mathbf{r}) \cap S \neq \emptyset ~~\textrm{ and }~ \dim(B_\epsilon(\mathbf{r})\cap S)=0
                                          \end{array} \right.
\]
where, $\epsilon$ is chosen small enough so that $B_\epsilon(\mathbf{r})$ intersects $S$ at most at one point.
Then $\omega_1,\omega_2 \in \mathbf{\varpi}^{N-1}_D$ are $\chi$-homotopic if and only if
 \[
  \int_{\omega_1} \psi_S = \int_{\omega_2} \psi_S
 \]
}

\begin{proof}:

As described in Note~\ref{note:connected-component}, we consider a single connected component, $S$, of $\widetilde{\mathcal{S}}$.

Given $\omega_1,\omega_2 \in \mathbf{\varpi}^{N-1}_D$, by Lemma~\ref{lemma:path-connected}, one can deform $\omega_1$ into $\omega_2$ such that an oriented $N$-volume, $\Delta \Omega$, is swept in the process (Figure \ref{fig:delta-Omega}). As a consequence of Lemma~\ref{lemma:intersection}, we can always do that in such a way that if $\Delta \Omega$ intersects $S$, it does so only at distinct points transversely.

More technically, one can define the cobordism $(\Delta\Omega;\omega_1,\omega_2)$ (see Appendix \ref{appendix:pathconnected}) such that $\Delta\Omega$ is immersible in $\mathbb{R}^D$. Hence we define a Morse function $f:\Delta\Omega\to[0,1]$, such that the pre-image of $t\in[0,1]$ under the action of $f$ (written as $f^{-1}(t)$) is in $cl(\mathbf{\varpi}^{N-1}_D)$. Thus, we have $f^{-1}(0) = \omega_1$, $f^{-1}(1) = \omega_2$, and $f^{-1}(t) \in cl(\mathbf{\varpi}^{N-1}_D) ~\forall t\in [0,1]$.

\emph{Part 1:}

Suppose $\omega_1,\omega_2 \in \mathbf{\varpi}^{N-1}_D$ are $\chi$-homotopic. We can thus deform $\omega_1$ into $\omega_2$ without intersecting $S$. As described earlier, the $N$-volume (oriented) swept in the process be $\Delta \Omega$.
If $\Omega_1 \in \mathbf{\Omega}(\omega_1)$, then quite clearly, $\Omega_2 := (\Omega_1 + \Delta \Omega) \in \mathbf{\Omega}(\omega_2)$ (Figure \ref{fig:delta-Omega}). Moreover, by definition of $\chi$-homotopy, $\Delta\Omega \cap S = \emptyset$. Thus, $\Delta\Omega$ can now be partitioned into small $N$-dimensional cells (which may be considered as topological equivalents of small $N$-balls), $B_1,B_2,\cdots$, such that $B_i \cap S = \emptyset,~\forall i$. Thus, by the definition of $\psi_S$,
\begin{equation} \int_{\Delta \Omega} \d\psi_S = \sum_i \int_{B_i} \d\psi_S = 0 \end{equation}
Hence we have,
\begin{equation} \label{eq:N-form-integration}
 \int_{\Omega_2} \d\psi_S ~~=~~ \int_{\Omega_1 + \Delta \Omega} \!\!\!\! \d\psi_S ~~=~~ \int_{\Omega_1} \d\psi_S ~+~ \int_{\Delta \Omega} \d\psi_S ~~=~~ \int_{\Omega_1} \d\psi_S
\end{equation}
Using Stokes Theorem \cite{DiffGeo:Svec:01}, since $\omega_1=\partial\Omega_1$ and $\omega_2=\partial\Omega_2$, from (\ref{eq:N-form-integration}) we get,
\begin{equation}
 \int_{\omega_1} \psi_S = \int_{\omega_2} \psi_S
\end{equation}

\begin{figure}[t]
  \begin{center}
    \subfigure[The $N$-volume swept by deformation of $\omega_1$ into $\omega_2$.]{
      \label{fig:delta-Omega}
      \includegraphics[width=0.4\textwidth, trim=100 120 100 75, clip=true]{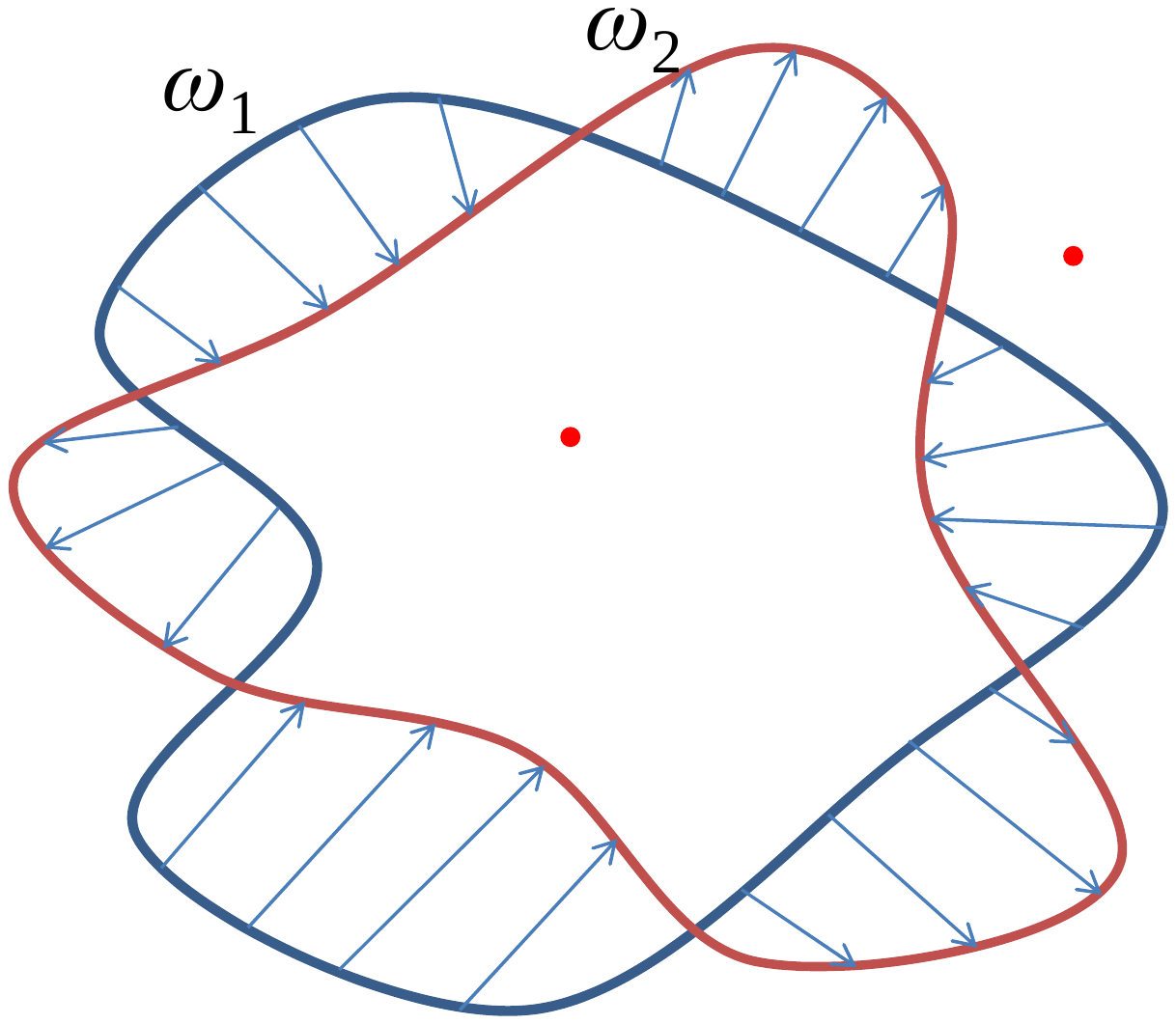}
    } \hspace{0.02in}
    \subfigure[If integration of $\d\psi_S$ over $\Delta\Omega$ is zero, we can perform surgery on it to remove its intersections with $S$, yet keeping its boundary unchanged.]{
      \label{fig:lemma-1-inv-proof}
      \includegraphics[width=0.45\textwidth, trim=200 200 200 120, clip=true]{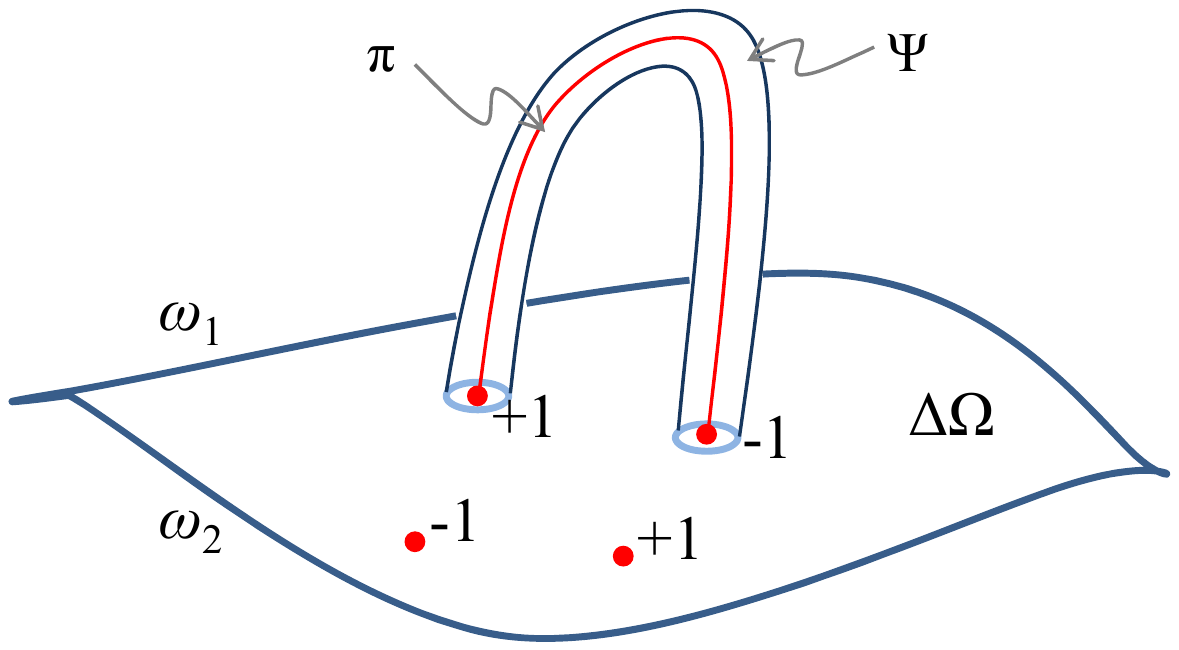}
    }
  \end{center}
  \vspace{-0.2cm}
  \caption{\label{fig:development}}
\end{figure}

\emph{Part 2:}

Suppose $\int_{\omega_1} \psi_S = \int_{\omega_2} \psi_S$. We now need to prove $\omega_1$ and $\omega_2$ are $\chi$-homotopic. We prove by contradiction.

Let there be $\omega_1$ and $\omega_2$ such that $\int_{\omega_1} \psi_S = \int_{\omega_2} \psi_S$, but $\omega_1$ and $\omega_2$ are not $\chi$-homotopic.
As before, let $\Omega_1 \in \mathbf{\Omega}(\omega_1)$, and $\Omega_2 := (\Omega_1 + \Delta \Omega) \in \mathbf{\Omega}(\omega_2)$, where $\Delta \Omega$ is the oriented volume swept as we deform $\omega_1$ into $\omega_2$. Since $\omega_1$ and $\omega_2$ are not $\chi$-homotopic, by definition, $\Delta \Omega \cap S \neq \emptyset$. However,
\begin{eqnarray}
 \int_{\omega_1} \psi_S = \int_{\omega_2} \psi_S & ~~~~\Rightarrow~~~~ & \int_{\Omega_1} \d\psi_S = \int_{\Omega_2} \d\psi_S \nonumber \\
 & \Rightarrow & \int_{\Omega_1} \d\psi_S = \int_{\Omega_1 + \Delta \Omega} \!\!\!\!\d\psi_S \nonumber \\ & \Rightarrow & \int_{\Delta \Omega} \d\psi_S = 0 \label{eq:zero-Delta-omg}
\end{eqnarray}
Once again we can partition $\Delta \Omega$ into small cells, $B_1,B_2,\cdots$.
Since $\int_{B_i} \d\psi_S$ can only assume non-zero values of $\pm 1$ when $B_i \cap S \neq \emptyset$, the last equation in (\ref{eq:zero-Delta-omg}) is possible only if $\Delta \Omega \cap S$ contains even number of points, $\mathbf{p}_1, \mathbf{p}_2, \cdots, \mathbf{p}_{2k}$, with $\int_{B^{\Delta\Omega}_\epsilon(\mathbf{p}_j)} \d\psi_S$ being equal to $+1$ for $k$ points (we call \emph{positive points}) and $-1$ for the remaining $k$ points (we call \emph{negative points}). Here $B^{\Delta\Omega}_\epsilon(\mathbf{p}_j)$ indicates a ball embedded in $\Delta\Omega$ with same orientation as $\Delta\Omega$ at $\mathbf{p}_j$. Let us group them into pairs of positive and negative points. Lets call one such pair $\{\mathbf{p}_{+}, \mathbf{p}_{-}\}$ such that $\int_{B^{\Delta\Omega}_\epsilon(\mathbf{p}_\pm)} \d\psi_S = \pm 1$ (see Figure~\ref{fig:lemma-1-inv-proof}).

Since $S$ is connected, we can find a path on $S$ that connects $\mathbf{p}_{+}$ to $\mathbf{p}_{-}$. We write the path as $\pi:[0,1]\to S ~s.t.,~\pi(0)=\mathbf{p}_{+}, ~\pi(1)=\mathbf{p}_{-}$.
Let us consider the closed $N$-dimensional ball of radius $\epsilon$, $\overline{B}_\epsilon$, as its center is traced along $\pi$ starting from $\mathbf{p}_{+}$ with same orientation as $\Delta\Omega$ at $\mathbf{p}_{+}$ (See Figure~\ref{fig:lemma-1-inv-proof}). We parametrize this ball as $\overline{B}_\epsilon(\pi(t))$. Thus,
\begin{equation} \label{eq:pi-0}
 \overline{B}_\epsilon(\pi(0)) = B^{\Delta\Omega}_\epsilon(\mathbf{p}_{+})
\end{equation}
The orientation of the ball during the tracing is kept such that $\overline{B}_\epsilon(\pi(t)),~\forall t\in[0,1]$, intersects $S$ at only one point. We perform the tracing so as to ensure that $\overline{B}_\epsilon(\pi(1))$ coincides with $B^{\Delta\Omega}_\epsilon(\mathbf{p}_{-})$, but possibly with opposite orientation.
This implies that the function $\mathcal{F}(t) := \int_{\overline{B}_\epsilon(\pi(t))} \d \psi_S$ has no discontinuities. By the definition of $\psi_S$, the only values that $\mathcal{F}(t)$ can assume are $0$ and $\pm 1$. But $\mathcal{F}(0) = 1$. Thus $\mathcal{F}(t) = 1$ identically. Therefore, $\mathcal{F}(1) = \int_{\overline{B}_\epsilon(\pi(1))} \d \psi_S = 1$.

Now by hypothesis, $B^{\Delta\Omega}_\epsilon(\mathbf{p}_{-})$ coincides with $\overline{B}_\epsilon(\pi(1))$. But $\int_{B^{\Delta\Omega}_\epsilon(\mathbf{p}_{-})} \d \psi_S = -1$.
Hence $B^{\Delta\Omega}_\epsilon(\mathbf{p}_{-})$ and $\overline{B}_\epsilon(\pi(1))$ are the same balls, but are oppositely oriented. That is,
\begin{equation}  \label{eq:pi-1}
 \overline{B}_\epsilon(\pi(1)) = - B^{\Delta\Omega}_\epsilon(\mathbf{p}_{-})
\end{equation}

Since $S$ is oriented and is embedded in $\mathbb{R}^D$, another implication of the above tracing is that, the $(N+1)$-dimensional volume swept by the ball around $img(\pi)$ is oriented and is embeddable in $\mathbb{R}^D$ (a \emph{tubular neighborhood} of $img(\pi)$, the surface of which can be considered as a $N$-dimensional handle being attached to $\Delta\Omega$). Call this $N+1$-volume $\Psi$. Thus, $\partial\Psi$ is also orientable \cite{Chern:DiffGeo}. The oriented boundary, $\partial\Psi$, includes $-B_\epsilon(\pi(0))$ and $B_\epsilon(\pi(1))$. That is,
\begin{equation}  \label{eq:partial-Psi}
 \partial\Psi ~~=~~ -B_\epsilon(\pi(0)) ~~+~~ B_\epsilon(\pi(1)) ~~+~~ \bigsqcup_{t=0}^1 \partial B_\epsilon(\pi(t)) \times \d\pi(t)
\end{equation} 
Where the operator `$+$' indicates disjoint union. The last quantity represents the $N$-volume traced by $\partial B_\epsilon(\pi(t))$. Note the first two terms have opposite sign. This is a direct consequence of concept that can be borrowed from oriented cobordism theory \cite{May:AlgebraicTopology}.

The above treatment, in essence, has similarity with defining a cobordism \cite{May:AlgebraicTopology}, $(\Psi; B^{\Delta\Omega}_\epsilon(\mathbf{p}_{+}), -B^{\Delta\Omega}_\epsilon(\mathbf{p}_{-}))$, between $B^{\Delta\Omega}_\epsilon(\mathbf{p}_{+})$ and $-B^{\Delta\Omega}_\epsilon(\mathbf{p}_{-})$, except that $\Psi$ has an additional boundary created by $\partial B_\epsilon(\pi(t))$. Lending concepts from oriented surgery theory, and using Equations (\ref{eq:pi-0}), (\ref{eq:pi-1}) and (\ref{eq:partial-Psi}), one can obtain the following closed and orientable manifold,
\begin{eqnarray}  \label{eq:surgery}
 \Delta\Omega' & = & \Delta\Omega ~+~ \partial\Psi \nonumber \\
 & = & \left(\Delta\Omega ~-~ B^{\Delta\Omega}_\epsilon(\mathbf{p}_{+}) ~-~ B^{\Delta\Omega}_\epsilon(\mathbf{p}_{-}) \right)
        ~~+~~ \bigsqcup_{t=0}^1 \partial B_\epsilon(\pi(t)) \times \d\pi(t)~~~~
\end{eqnarray}
which is still an oriented $N$-dimensional sub-manifold with the same oriented boundary as $\partial\Delta\Omega$ (since $\partial\partial\Psi = \emptyset$), without any additional discontinuity, boundary or connected component being introduced. However, $\mathbf{p}_{+}, \mathbf{p}_{-} \notin \Delta\Omega'$.
The operation is illustrated in Figure~\ref{fig:lemma-1-inv-proof}.

We can perform similar consecutive surgeries by taking pairs of positive and negative points. Finally we end up with the $N$-volume $\Delta\overline{\Omega}$ such that $\partial\Delta\overline{\Omega}=\partial\Delta\Omega$, and $\Delta\overline{\Omega} \cap S = \emptyset$.
It is possible to smoothen $\Delta\overline{\Omega}$ if required, thus making $\Delta\overline{\Omega}$ a smooth, oriented manifold.
Since $(\Delta\Omega;\omega_1,\omega_2)$ was a cobordism, it follows that $(\Delta\overline{\Omega};\omega_1,\omega_2)$ is also a cobordism between $\omega_1$ and $\omega_2$. Thus we can define a Morse function $\overline{f}: \Delta\overline{\Omega} \to [0,1]$ such that $\overline{f}^{-1}(0) = \omega_1$, $\overline{f}^{-1}(1) = \omega_2$, $\overline{f}^{-1}(t) \in cl(\mathbf{\varpi}^{N-1}_D) ~\forall t\in [0,1]$, and $\overline{f}^{-1}(t) \cap S = \emptyset~\forall t\in [0,1]$ (since $\Delta\overline{\Omega} \cap S = \emptyset$).

Thus, defining $\phi(t) = \overline{f}^{-1}(t)$ we prove the existence of a path in $cl(\mathbf{\varpi}^{N-1}_D)$ connecting $\omega_1$ and $\omega_2$ such that $\phi(\alpha)\cap S = \emptyset ~\forall \alpha\in [0,1]$. Hence $\omega_1$ and $\omega_2$ must be $\chi$-homotopic according to Definition \ref{def:homotopy}. Hence our hypothesis of $\omega_1$ and $\omega_2$ not being $\chi$-homotopic was incorrect.

Hence proved.




\myqed \end{proof}


\subsection{Some notes on the Dirac Delta function on $\mathbb{R}^D$} \label{appendix:dirac}


We investigate a particular \emph{closed differential form} which is not an \emph{exact differential form} \cite{Differential:flanders:1989}. Exterior derivative of such a differential form exhibits characteristics of the \emph{Dirac Delta function}.

Consider the differential $(D-1)$-form,
\begin{eqnarray}
 G(\mathbf{x}) & = & * \left( \frac{x_1 \d x_1 + x_2 \d x_2 + \cdots + x_D \d x_D}{\left( x_1^2 + x_2^2 + \cdots + x_D^2 \right)^{D/2}} \right) \nonumber \\
   & = & \frac{1}{\left( x_1^2 + x_2^2 + \cdots + x_D^2 \right)^{D/2}} \sum_{k=1}^D x_k (-1)^{k+1} ~\d x_1 \wedge \d x_2 \wedge \cdots \d x_{k-1} \wedge \d x_{k+1} \cdots \wedge \d x_D \nonumber \\ \label{eq:gen-green-fun}
\end{eqnarray}
where the ``$*$'' represents the \emph{Hodge dual} in $\mathbb{R}^D$ \cite{Differential:flanders:1989}.
We can show that $G$ is a \emph{closed differential form}, but not an \emph{exact differential form}.

Proving that $G$ is a \emph{closed differential form} is straight-forward. We compute the exterior derivative of $G$,
\begin{eqnarray}
 \d G & = & \sum_{k=1}^D \left( \frac{\partial}{\partial x_k} \left( \frac{x_k}{\left( x_1^2 + x_2^2 + \cdots + x_D^2 \right)^{D/2}} \right) \right) (-1)^{2(k+1)} ~\d x_1 \wedge \d x_2 \wedge \cdots \wedge \d x_D \nonumber \\
   & = & \sum_{k=1}^D \left( \frac{ x_1^2 + x_2^2 + \cdots + x_D^2 - D x_k^2 }{\left( x_1^2 + x_2^2 + \cdots + x_D^2 \right)^{\frac{D}{2}+1}} \right) ~\d x_1 \wedge \d x_2 \wedge \cdots \wedge \d x_D \nonumber \\
   & = & 0, ~~\textrm{ everywhere except at $\mathbf{x}=0$}
\end{eqnarray}
Thus $G$ is a closed differential form almost everywhere in $\mathbb{R}^D$.

For proving that $G$ is not an exact differential form we will determine the value of the integral of $G$ on the unit $(D-1)$-sphere, $\mathbb{S}^{D-1}$.
On $\mathbb{S}^{D-1}$ we have $x_1^2 + x_2^2 + \cdots + x_D^2 = 1$. Thus,
\begin{equation} \label{eq:G-integration-first}
 \int_{\mathbb{S}^{D-1}} \!\! G ~~~=~~~ \int_{\mathbb{S}^{D-1}} * \left( \sum_{k=1}^D x_k \d x_k \right) ~~~=~~~ A_{D-1}
\end{equation}
where, $A_{D-1}$ is the surface area of the $(D-1)$-sphere, and the last equality is a standard result \cite{Differential:flanders:1989}.
Thus, by Stoke's integral theorem, $\int_{\textrm{Ins}(\mathbb{S}^{D-1})} \d G = \int_{\mathbb{S}^{D-1}} G = A_{D-1} \neq 0$ (where $\textrm{Ins}(\mathbb{S}^{D-1})$ represents the inside of $\mathbb{S}^{D-1}$ when embedded in $\mathbb{R}^D$, \emph{i.e.} the unit $D$-ball), which implies that $\d G$ cannot be $0$ everywhere in $\textrm{Ins}(\mathbb{S}^{D-1})$. But we have earlier shown that $\d G = 0$ in $\mathbb{R}^D\setminus 0$.

Thus, we define the \emph{Dirac Delta function} in $\mathbb{R}^D$, $\delta^D(\mathbf{x})$, such that,
\begin{equation}
 \delta^D(\mathbf{x}) ~~~\d x_1 \wedge \d x_2 \wedge \cdots \wedge \d x_D ~=~ \frac{\d G}{A_{D-1}}  ~=~ \frac{\Gamma (\frac{D}{2} + 1)}{ D \pi^{\frac{D}{2}}} ~~\d G \label{eq:dirac-delta-G-area}
\end{equation}
where we used the well-known result for area of a $(D-1)$-sphere \cite{Differential:flanders:1989}.

The \emph{Dirac Delta Function} is itself zero everywhere in $\mathbb{R}^D$, except at the origin, where it blows up. Clearly, from (\ref{eq:G-integration-first}) and (\ref{eq:dirac-delta-G-area}), for a given volume $V$ in $\mathbb{R}^D$,
\begin{equation}
 \int_{V} \delta^D(\mathbf{x}) ~~~\d x_1 \wedge \d x_2 \wedge \cdots \wedge \d x_D 
  ~~=~~  \left\{
      \begin{array}{l}
       1, ~~\textrm{ if $V$ contains $0$} \\
       0, ~~\textrm{ if $V$ does not contain $0$}
      \end{array}
     \right. \label{eq:dirac-delta-integration}
\end{equation}

We note that such closed but non-exact differential form, $G$, is not unique. In fact, one can easily verify that the following differential forms are also closed, and in general can be non-exact,
\begin{equation}
 G'(\mathbf{x}) = *\left( \frac{1}{\left( x_1^2 + x_2^2 + \cdots + x_D^2 \right)^{D/2}} \sum_{k=1}^D a_k ~x_{\sigma_1(k)} x_{\sigma_2(k)} \cdots x_{\sigma_P(k)} ~~\d x_k \right) \label{eq:other-gen-green-fun}
\end{equation}
where $\sigma_m ~~(1\leq m \leq P)$ are permutations on $\{1,2,\cdots,D\}$ such that $\sigma_m(k)\neq k, \forall m,k$, and \[\sum_{k=1}^D a_k ~x_k x_{\sigma_1(k)} x_{\sigma_2(k)} \cdots x_{\sigma_P(k)} = 0\].

Then, of course, any linear combinations of such $(D-1)$-differential forms will also have the same properties of being closed but not exact. We represent such a general $(D-1)$-differential form by $\overline{G}(\mathbf{x})$, such that $\d \overline{G}(\mathbf{x}) = C_{\overline{G}} ~\delta(\mathbf{x}) ~~\d x_1 \wedge \d x_2 \wedge \cdots \wedge \d x_D$, for some constant $C_{\overline{G}}$ which depends on the choice of $\overline{G}$.

We define the functions $\mathcal{G}_k(\mathbf{x}), ~~k=1,\cdots,D$ such that,
\begin{equation}
 \overline{G}(\mathbf{x}) = C_{\overline{G}} ~\sum_{k=1}^D \mathcal{G}_k(\mathbf{x}) ~~(-1)^{k+1} ~\d x_1 \wedge \d x_2 \wedge \cdots \d x_{k-1} \wedge \d x_{k+1} \cdots \wedge \d x_D
\end{equation}

Using (\ref{eq:dirac-delta-G-area}), this implies,
\begin{equation}
 \delta^D(\mathbf{x}) = \sum_{k=1}^D \frac{\partial \mathcal{G}_k (\mathbf{x})}{\partial x_k} \label{eq:dirac-delta}
\end{equation}
\bibliography{chi-homotopy.bib}
\bibliographystyle{plain}

\end{document}